\numberwithin{equation}{section}
\setlist{  
  listparindent=\parindent,
  parsep=0pt,
}
\newcolumntype{C}[1]{>{\Centering}m{#1}}
\newcommand*{\algrule}[1][\algorithmicindent]{%
  \makebox[#1][l]{%
    \hspace*{.2em}
    \vrule height .75\baselineskip depth .25\baselineskip
  }
}
\def\ALG@printindent{%
    \ifnum \theALG@nested>0
    \ifx\ALG@text\ALG@x@notext
    \else
    \unskip
    \ALG@printindent@tempcnta=1
    \loop
    \algrule[\csname ALG@ind@\the\ALG@printindent@tempcnta\endcsname]%
    \advance \ALG@printindent@tempcnta 1
    \ifnum \ALG@printindent@tempcnta<\numexpr\theALG@nested+1\relax
    \repeat
    \fi
    \fi
}
\patchcmd{\ALG@doentity}{\noindent\hskip\ALG@tlm}{\ALG@printindent}{}{\errmessage{failed to patch}}
\patchcmd{\ALG@doentity}{\item[]\nointerlineskip}{}{}{} 
\newcommand{\pdual}[1]{\left\langle#1\right\rangle}
\newcommand{\jump}[1]{ [ \! [ {#1} ] \! ] }
\newcommand{\what}[1]{\widehat{#1}}
\newcommand{\kap}{\kappa}
\newcommand{\bq}{\boldsymbol{q}}
\newcommand{\bsigma}{\boldsymbol{\sigma}}
\newcommand{\g}{\overline{g}}
\newcommand{\compD}{\Omega_h}
\newcommand{\erroru}{\varepsilon^u}
\newcommand{\errorsigma}{\boldsymbol{\varepsilon}^{\boldsymbol\sigma}}
\newcommand{\errorq}{\boldsymbol{\varepsilon}^{\boldsymbol{q}}}
\newcommand{\errortu}{\varepsilon^{\widehat{u}}}
\newcommand{\errortq}{\boldsymbol{\varepsilon}^{\widehat{\boldsymbol{q}}}}
\newcommand{\projerrorsigma}{\Lambda_{\boldsymbol{\sigma}}}
\newcommand{\projerrorq}{\Lambda_{\boldsymbol{q}}}
\newcommand{\projerroru}{\Lambda_{u}}
\newcommand{\triple}[1]{|\!|\!|{#1}|\!|\!|}
\newcommand{\mc}[1]{\mathcal{#1}}
\newcommand{\mr}[1]{\mathrm{#1}}
\newcommand{\md}[1]{\mathds{#1}}
\newcommand{\mbf}[1]{\boldsymbol{#1}}
\newcommand{\bsy}[1]{\boldsymbol{#1}}
\newcommand{\cO}{\mathcal{O}}
\newcommand\bH{\boldsymbol{H}}
\newtheorem{thm}{Theorem}
\newtheorem{lem}{Lemma}
\newtheorem{crl}{Corollary}
\title{Error analysis of an unfitted HDG method \\ for a class of non-linear elliptic problems.}
\author[1,2,4]{Nestor S\'anchez }
\author[3]{Tonatiuh S\'anchez-Vizuet}
\author[1,2]{Manuel Solano}
\affil[1]{{\small Departamento de Ingenier\'ia Matem\'atica, Universidad de Concepci\'on, Concepci\'on, Chile.}}
\affil[2]{{\small Centro de Investigaci\'on 
en Ingenier\'ia Matem\'atica (CI$^2$MA),Universidad de Concepci\'on, Concepci\'on, Chile.}}
\affil[3]{{\small Department of Mathematics, The University of Arizona, USA.}}
\affil[4]{{\small Instituto de Matem\'aticas, Unidad Juriquilla. Universidad Nacional Aut\'onoma de M\'exico.}}
\begin{document}

\maketitle
\begin{abstract}
We study Hibridizable Discontinuous Galerkin (HDG) discretizations for a class of non-linear interior elliptic boundary value problems posed in curved domains where both the source term and the diffusion coefficient are non-linear. We consider the cases where the non-linear diffusion coefficient depends on the solution and on the gradient of the solution. To sidestep the need for curved elements, the discrete solution is computed on a polygonal subdomain that is not assumed to interpolate the true boundary, giving rise to an unfitted computational mesh. We show that, under mild assumptions on the source term and the computational domain, the discrete systems are well posed. Furthermore, we provide \textit{a priori} error estimates showing that the discrete solution will have optimal order of convergence as long as the distance between the curved boundary and the computational boundary remains of the same order of magnitude as the mesh parameter.
\end{abstract}

\noindent\textbf{Keywords:} Hybridizable Discontinuous Galerkin, Non-linear Boundary Value Problems, Curved Boundary, Unfitted mesh, Transfer paths.

\noindent\textbf{AMS subject classification:} 65N08, 65N30, 65N85.


\section{Introduction}
In this work we will study a discretization based on the hybridizable discontinous Galerkin (HDG) method \cite{CoGoLa2009} for a class of quasilinear elliptic boundary value problems of the form
\begin{subequations}\label{eq:BVP}
\begin{alignat}{6}
-\nabla\cdot\left(\kappa\,\nabla u\right) =\,& f(u) & \qquad & \text{ in } \Omega \\
u =\,& g & \qquad & \text{ on } \Gamma := \partial\Omega,
\end{alignat}
where the domain $\Omega \subset \mathbb{R}^d$ ($d=2,3$) is not necessarily polygonal/polyhedral, the diffusion coefficient, $\kappa$, is a positive function that depends on the solution, $u$, in one of the following functional forms
\begin{equation}\label{eq:BVPkappa}
\kappa = \left\{ \begin{array}{c} \kappa(u) \\ \kappa(\nabla u)\end{array}\right.,
\end{equation}
\end{subequations}
and that will be assumed to be a bounded and Lipschitz---in a sense that will be made precise in due time. In addition, the source  function $f$ will be taken to be a Lipschitz-continuous mapping from $L^2(\Omega)$ to $L^2(\Omega)$, so that there exists $L_{f}> 0$ such that
    \begin{equation}\label{Lipschitz_F}
    \|f(u_1) - f(u_2)\|_{\Omega} \leq L_{f} \, \|u_1-u_2\|_{\Omega} \qquad \forall \, u_1, u_2 \in L^2(\Omega).
    \end{equation}
The authors became interested boundary value problems of this form through the study of magnetic equilibrium configurations in cylindrically symmetric fusion reactors. In the context of plasma equilibrium, if the location of the plasma within the reactor is assumed to be known, the equilibrium condition between magnetic and hydrostatic forces results in an equation like the one above, known as the Grad-Shafranov (or Grad-Shafranov-Schl\"uter) equation \cite{GrRu:1958, LuSc:1957,Shafranov1958}. The domain $\Omega$ in which the equation holds, corresponds to the region of the reactor occupied by the plasma. This region in general has a non-polygonal, piecewise smooth Lipschitz boundary $\Gamma := \partial\Omega$ with a small number of corners---known in the plasma literature as \textit{x-points}.

In this model, the unknown $u$ is the stream function of the poloidal magnetic field, the source term $f$ is a nonlinear function of $u$ accounting for the effects of the hydrostatic pressure and total electric currents present in the device, and the coefficient $\kappa$ encodes the magnetic properties of the system. The case where $\kappa$ is a constant leads to a semi-linear equation for which an HDG discretization was proposed and implemented in  \cite{SaSo2018, SaCeSo2019}, and analyzed in detail in \cite{SaSaSo2019}. However, in the presence of ferroelectric materials, the permeability is affected by the total magnetic field $\mathbf B$---which is proportional to the gradient of $u$---and the coefficient then takes the form $\kappa = \kappa(\nabla u)$, leading to a quasi-linear equation that requires the more detailed treatment that will be the subject of this article. Some theoretical studies of the HDG method applied to quasilinear problems have been pursued recently \cite{DaFe:2014,GaSe:2015,GS2}, however these efforts are limited to polygonal domains. Moreover, the first reference does not consider non-linearities of the form $\kappa(\nabla u)$, while in \cite{GaSe:2015,GS2} the authors analyzed an augmented HDG discretization for a strictly quasi-linear problem arising from a non-linear Stokes flow using an approach based on a nonlinear version of the Babuska-Brezzi theory. \textcolor{black}{A remarkable effort involving HDG and interpolatory methods for semilinear problems was recently carried out in \cite{ChCoSiZh2019,CoSiZh2019}, as well as some recent contribution for the steady state incompressible  Navier--Stokes equations \cite{Leng2021}. An interesting application of HDG to some fully non-linear time dependent equations related to shallow water models was carried out in \cite{Marche2020}, while an HDG scheme for the equations of magnetohydrodynamics was analyzed in \cite{QiSh2019}. However in these works, either the domain is considered to be polygonal or non-linearities are restricted to the source term---or both.} As we will show, our analysis will be valid for both quasi-linear and semi-linear problems, will not require an augmented formulation and the domain may be piecewise smooth. 

Due to the non-polygonal nature of the domain of definition, any discretization scheme employed must handle the additional geometric complexity of the problem. The HDG scheme that we propose deals with the geometry using a method introduced within the context of HDG discretizations for linear elliptic equations in \cite{CoSo2012} and given firm theoretical justification in \cite{CoQiuSo2014}. The idea  consists of posing the discrete problem in a polygonal subdomain $\Omega_h\subset\Omega$ and transferring the boundary data to the computational boundary $\Gamma_h := \partial\Omega_h$ through a mechanism that involves a line integral of the numerical flux. This process requires reformulating the equation in the form of a first order system (mixed form), but greatly simplifies the computational implementation by allowing the use of a shape regular triangulation and avoiding the need for high order curved elements on the boundary. Moreover, in applications where the flux is a physically relevant variable---as is the case of the plasma problem---the direct discretization of the flux required by the mixed formulation is an additional advantage of the HDG formulation. 

Depending on the location of the non-linearity within the equation, the analysis required to generalize \cite{CoQiuSo2014,CoSo2012} into non-linear problems of the form \eqref{eq:BVP} can be split into two independent parts. The case where $\kappa$ is independent of the solution and only the source $f$ is a function of $u$, has already been dealt with in \cite{SaSaSo2019}. The current communication will deal with situations where the source term is independent of $u$ and $\kappa$ takes one of the forms \eqref{eq:BVPkappa}, each of which must also be analyzed separately. We will start by introducing basic notation in Section \ref{sec:preliminaries}, where and the idea of the extended domains and transfer paths will be presented. In this section we will also  describe some geometric hypotheses on the the computational domain that will be useful for the error analysis. Having established the basic setting, we then proceed to study separately the HDG discretizations for the case when the diffusion coefficient is a function of $u$ only (Section \ref{sec:DependenceOnU}), and the case where the diffusion coefficient depends on $\nabla u$ (Section \ref{sec:DependenceOnGradU}). In these sections the well posedness of the corresponding discrete HDG formulations are established, and \textit{a priori} error analyses on the discretizations are performed.

\section{Preliminaries}\label{sec:preliminaries}
%
\subsection{Computational domains and admissible triangulations}\label{sec:ComputationalDomain}
%
Given the domain $\Omega$ where \eqref{eq:BVP} is posed, we will  define a family of polygonal subdomains and admissible triangulations approximating $\Omega$ where we will ultimately pose our discretization. First, consider a family of simply connected domains $\{\Omega_\alpha\}_{\alpha>0}$  such that, for every $\alpha$ the following conditions hold: (1) $\Omega_\alpha \subseteq \Omega$, (2) the boundary $\Gamma_\alpha :=\partial \Omega_\alpha$ is a polygon, and (3) for every $\epsilon>0$ there are infinitely many indices $\alpha$ such that $\lambda(\Omega\setminus\Omega_\alpha)<\epsilon$. In the preceding expression $\lambda(\cdot)$ denotes the Lebesgue measure. These conditions ensure that the family of subdomains $\{\Omega_\alpha\}_{\alpha>0}$ will exhaust $\Omega$.

Having built the family $\{\Omega_\alpha\}_{\alpha>0}$ satisfying all the conditions above, the next step is to define a family of admissible simplicial triangulations $\{\mc{T}_{h}\}_{h>0}$. To be considered admissible, a triangulation $\mc{T}_{h}$ must be such that: (1) $\mc{T}_{h}$ is a triangulation for at least one $\Omega_h \in \{\Omega_\alpha\}_{\alpha>0}$ (we will identify both $\mc{T}_h$ and the respective domain $\Omega_h$ with the same subscript $h$, adding copies of $\Omega_h$ to $\{\Omega_\alpha\}_{\alpha>0}$ if necessary to account for different triangulations of the same domain)  (2) it is shape regular, meaning that there exists $\beta>0$ such that for all elements $T\in \mc{T}_h$ and all $h>0$, $h_T / \rho_T \leq \beta $, where $h_T$ is the diameter of  $T$ and $\rho_T$ is the diameter of the largest ball contained in $T$, and (3) for every $T\in \mc{T}_h$ such that $T\cap\Gamma_h \neq \varnothing$, the maximum distance between $\boldsymbol x \in T\cap\Gamma_h$ and $\boldsymbol y \in \Gamma$ is of the same order of magnitude as the element diameter $h_T$. More precisely, if $d_{loc}:= \max\{d(\boldsymbol x,\boldsymbol y): \boldsymbol x \in T\cap\Gamma_h \text{ and } \boldsymbol y \in \Gamma\}$ then $d_{loc}= \mathcal O (h_T)$. This last requirement, which will be referred to as the \textit{local proximity condition} and is depicted schematically in Figure \ref{fig:LocalProximity}, is of key importance for the transfer process that will be defined later on. 
\begin{figure}
\centering
\begin{tabular}{ccc}
\includegraphics[width = 0.25\linewidth]{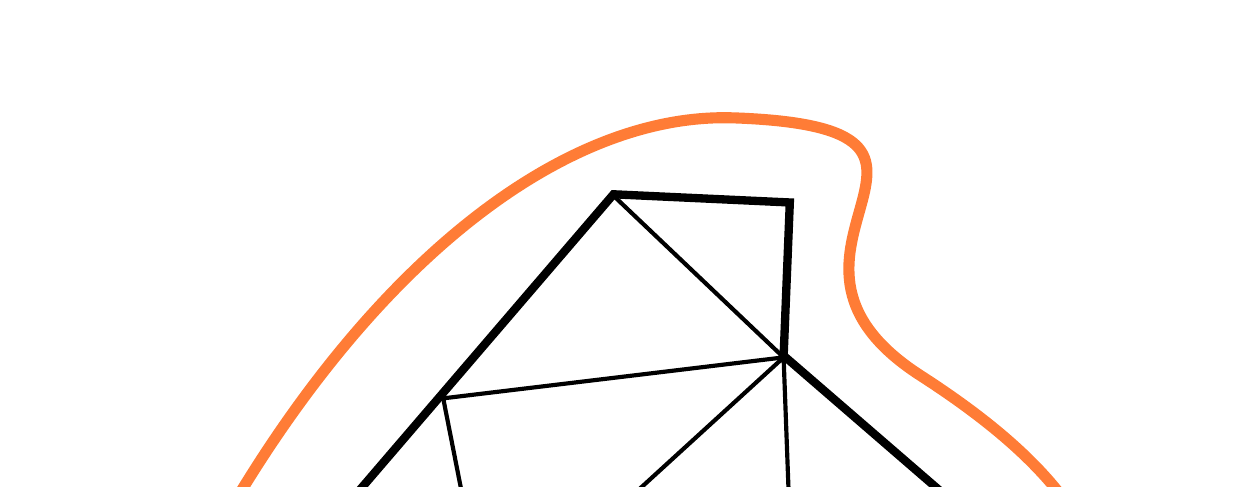} \qquad  &
\qquad 
\includegraphics[width = 0.25\linewidth]{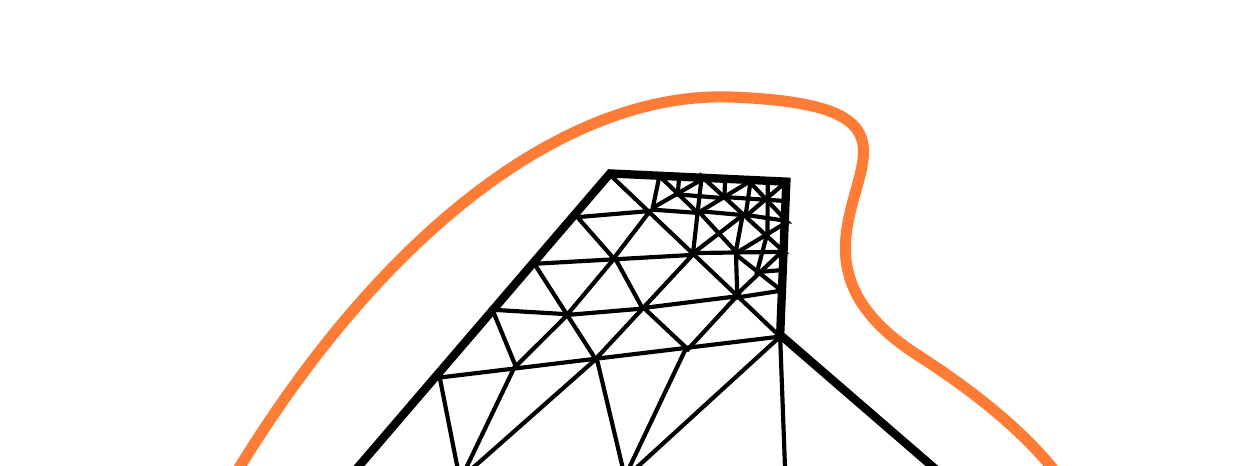} \qquad  &
\qquad 
\includegraphics[width = 0.25\linewidth]{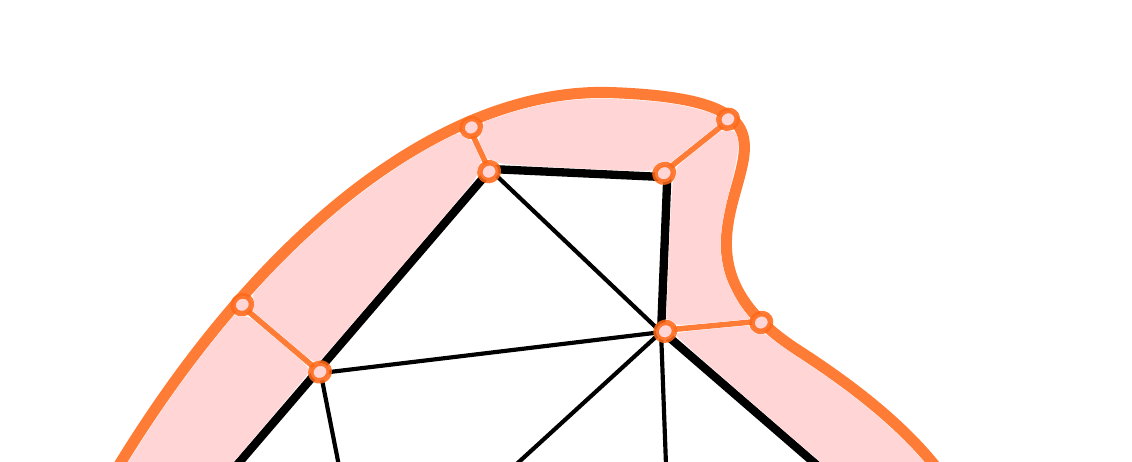} 
\end{tabular}
\caption{ Left and center: The proximity condition ensures that the distance between the computational and the physical boundaries remains always of the same order of magnitude as the local element diameter. The schematic shows close ups to the boundary of two triangulations of the same domain: an admissible triangulation satisfying the local proximity condition (left), and inadmissible one that violates the local proximity condition (center). Right: An admissible triangulation and one possible arrangement of extension patches $T^e_{ext}$ (shaded in the figure) defined on the region $\Omega\setminus\Omega_h$.}
\label{fig:LocalProximity}
\end{figure} 

For every element $T$ in a particular triangulation, we will denote by $\boldsymbol{n}_T$ the outward unit normal vector to $T$, or simply $\boldsymbol{n}$ instead of $\boldsymbol{n}_T$ whenever the context prevents any confusion. As it is conventional, we will denote the mesh parameter as $\displaystyle h:=\max_{T\in \mc{T}_h} h_T$, which will be assumed to be smaller than one for the sake of simplicity. We will denote by $e$ any face of a simplex and its length by $h_e$. Moreover, we will talk about an \textit{interior face} $e$ if there are two elements $T^+$ and $T^-$ in the triangulation $\mc{T}_h$ such that $e = \partial T^+ \cap \partial T^-$. The set of all interior faces will be denoted by $\mc{E}_h^\circ$. In a similar manner, we will talk about a \textit{boundary face} $e$ if there is an element $T\in \mc{T}_h$ such that $e=\partial T \cap \Gamma_h$; the set of boundary faces will be denoted by $ \mc{E}_h^{\partial}$. Note that with these definitions, the entirety of the faces of the triangulation denoted by $\mc{E}_h$ (often referred to as the \textit{skeleton} of the mesh) can then be decomposed as $\mc{E}_h= \mc{E}_h^\circ \cup \mc{E}_h^{\partial}$.

Throughout the paper, we will be working with functions that in general will not be continuous across mesh elements. For a scalar-valued function we will use the symbol $\jump{w}:=
w^+-w^-$ to refer to its jump across any given interior face. At the boundary faces, the jump will be defined as $\jump{w}:=w-\varphi_h$, where $\varphi_h$ is the approximation of the boundary data at $\Gamma_h$ that will be defined later. In the case of vector-valued functions $\mbf{v}$, we will be interested in the discontinuity of its normal component across interior faces, which will be denoted by $\jump{\mbf{v}}:=\mbf{v}^+\cdot \mbf{n}^+ + \mbf{v}^-\cdot \mbf{n}^-$.

\textbf{A remark on the local proximity condition and mesh refinement:} The local proximity condition limits the minimum size that the elements near the boundary of an admissible triangulation can attain. Therefore, mesh refinement in this context must be understood as moving through a sequence of computational domains in the set $\{\Omega_h\}_{h>0}$ and their corresponding admissible triangulations in $\{\mathcal{T}_h\}_{h>0}$ as the parameter $h\to 0$. As it will be shown later, the error estimates will not depend on the particular domain $\Omega_h$ or triangulation $\mathcal{T}_h$ as long as the three requirements on the mesh stated above are satisfied. Possible ways of building sequences of admissible triangulations and computational domians have been detailed in \cite{CoSo2012} for uniform meshes and in \cite{SaCeSo2019} for adaptively refined triangulations.
%
\subsection{The extended domain \label{sec:ExtendedDomain}}
Having defined the family of polygonal subdomains and admissible triangulations on which the discretization will be performend, we will now proceed to detail the process through which the boundary information will be transferred from the boundary into the computational domain. In order to do that we will have to tessellate the region enclosed between the two boundaries $\Gamma$ and $\Gamma_h$ as follows. 

Given a triangulation $\mc{T}_h$ of the computational domain  $\Omega_h$ and a boundary face $e \in \mc{E}_h^{\partial}$, we will denote by $T^e$ the unique element of $\mc{T}_h$ such that $e \cap \overline{T^e} = e$. To every point $\mbf{x} \in e$, we will associate---in a smooth fashion---a point $\overline{\mbf{x}} \in \Gamma$ and set $l(\mbf{x})=|\mbf{x}-\overline{\mbf{x}}|$. We will define the \textit{extension patch} $T^e_{ext}$ as
	\begin{equation*}
	T^e_{ext} := \{ \mbf{x} +s\mbf{t} : 0 \leq s \leq l(\mbf{x}) , \mbf{x} \in e \},
	\end{equation*}
where $\mbf{t}=\mbf{t}(\mbf{x})$ is the unit vector anchored at $\boldsymbol x$ and pointing in the direction of $\overline{\mbf{x}}$.  With this notation, the line segment connecting $\mbf{x}$ to $\overline{\mbf{x}}$ can be parameterized by
    \begin{equation*}
	\sigma_{\mbf{t}}(\mbf{x}) := \{ \mbf{x} +s\mbf{t} : s\in [0, l(\mbf{x})]\}.
	\end{equation*}
%
The point $\overline{\mbf{x}}\in \Gamma$ and therefore the vector $\boldsymbol{t}(\boldsymbol x)$ can be specified in several ways. Here, we will consider that the point has been determined in such a way that
\begin{equation}\label{eq:S1}
\mbf{t}(\mbf{x}) = \mbf{n} \text{ for all }\mbf{x}\in e.
\end{equation}
This assumption is made with the sole purpose of making the analysis simpler, it can in fact be relaxed to the existence of a constant $a_0$ such that $0<a_0\leq \boldsymbol t(\boldsymbol x)\cdot\boldsymbol n$ for every $\boldsymbol x$ belonging to a boundary edge. The numerical method described here is remarkably robust with respect to the method used to choose $\boldsymbol t(\boldsymbol x)$. Previously, the direction had been determined using the algorithm proposed by \cite{CoSo2012}, which assigns $\overline{\boldsymbol{x}}$ in such a way that the three following conditions are satisfied: (1) $\overline{\boldsymbol{x}}$ is unique, (2) any two different line segments $\sigma_{\mbf{t}}$ do not intersect each other inside $T^e_{ext}$, and (3) the segments $\sigma_{\mbf{t}}$ do not intersect the interior of $\Omega_h$. As it is proven in the aforementioned reference, these three conditions guarantee that the union of $T^e_{ext}$ completely covers $\compD^c := \Omega \setminus \overline{\compD}$. An alternate method was used and tested numerically in \cite{SaSo2018,SaCeSo2019}, where $\boldsymbol t(\boldsymbol x)$ was determined using a weighted average of the normal vectors from neighboring  boundary edges.   

For an extended patch $T^e_{ext}$ and a mesh element $T^e\in\mathcal T_h$ sharing a boundary face $e\in\mathcal E^\partial_h$, we denote by $h_e^{\perp}$ (resp. $H_e^{\perp}$) the largest distance between a point inside $T_e$ (resp. $T_{ext}^e$) and the plane determined by the face $e$. The ratio between these two distances will be denoted by $r_e := H_e^{\perp} / h_e^{\perp}$ and the maximum such ratio taken over all the boundary edges will be denoted by $\displaystyle R_{\mathcal T_h}:= \max_{e\in \mc{E}_h^{\partial}} r_e$. Note that the local proximity condition will ensure the existence of a constant R, that we will call the \textit{proximity constant}, independent of the particular triangulation $\mathcal T_h$, such that
\begin{equation}\label{eq:S2}
 R: = \sup_{\mathcal T_h} R_{\mathcal T_h}  = \sup_{\mathcal T_h}\left\{ \max_{e\in \mc{E}_h^{\partial}}\, r_e\right\}< \infty.  
\end{equation}
We will also define the class of non-trivial vector-valued polynomials of degree at most $k$ defined in and across both patches as
    \begin{equation*}
\mathcal V^k := \left\{ \boldsymbol p \in \mathbb [\mathbb{P}_k(T^e_{ext} \cup T^e)]^2\, : \,  \boldsymbol p\cdot \boldsymbol{n}_e\neq\boldsymbol 0 \right\}.
    \end{equation*}
We can then introduce, for all those elements with a non-empty intersection with the computational boundary, the element-wise constants
    \begin{equation*}
	C^e_{ext} := \dfrac{1}{\sqrt{r_e}} \sup_{\bsy{\chi}\in \mathcal V^k} \dfrac{\|\bsy{\chi}\cdot\boldsymbol n_e \|_{T_{ext}^e} }{\|\bsy{\chi}\cdot\boldsymbol n_e\|_{T^e}} \quad \text{ and } \quad
	C^e_{inv} := h_e^{\perp} \sup_{\bsy{\chi}\in \mathcal V^k} \dfrac{\|\nabla \bsy{\chi}\cdot \boldsymbol n_e  \|_{T^e} }{\|\bsy{\chi}\cdot\boldsymbol n_e \|_{T^e}}, 
	\end{equation*}
where, in abuse of notation, $\boldsymbol n_e$ is a constant vector field defined in $T^e_{ext} \cup T^e\cup\, e$  that coincides with the unit exterior normal vector associated to the face $e$ and pointing in the direction of the extension patch. Above, the norms $\|\cdot\|_{T^e_{ext}}$ and $\|\cdot\|_{T^e}$ are the standard $L^2$ norms supported on the extension patch $T^e_{ext}$ and its  neighboring element $T^e$ respectively. In \cite{CoQiuSo2014}, these constants were bounded in terms of the polynomial degree of the approximation, $k$, and the regularity constant of the mesh, $\beta$, as
\begin{equation}\label{eq:Cconstants}
C^e_{ext}\leq C_1(k+1)^2(3\beta + 2)^k\,,\qquad C^e_{inv}\leq C_2k^2,
\end{equation}
where $C_1$ and $C_2$ depend only on the mesh regularity. As we will see below, these constants will determine the magnitude of the proximity constant and therefore the maximum admissible gap between the computational and physical boundaries.

We will also need to make two technical assumptions relating the proximity constant to the and the diffusion coefficient $\kappa$ and the degree of the polynomial approximation. For each $e\in \mc{E}_h^{\partial}$ we will require the following to hold:
\begin{subequations}\label{eq:Assumptions}
	\begin{align}
    H_e^{\perp}\, \leq \frac{1}{3}\, \underline{\kappa} \,\overline{\tau}\,^{-1} ,\label{eq:S3} \\
	\overline{\kap}\, \underline{\kap}^{-1}\, r_e^3\, (C_{ext}^e\,  C_{inv}^e)^2  \leq 1, \label{eq:S4}
	\end{align}
\end{subequations}
where $\underline{\kap}$ and $\overline{\kap}$ are the lower and upper bounds of $\kap$, resp., specified in \eqref{eq:assumptions_kappa}, whereas $\overline{\tau}$ is the maximum of the stabilization parameter $\tau$ of the HDG scheme.
The first of these two conditions, \eqref{eq:S3}, states the well known fact that for small values of the diffusivity, small scale behavior can be expected near the physical boundary, and therefore fine extension patches are required. However, it also provides the additional insight that the distance between the boundaries can be increased at the cost of accepting smaller values of the stabilization factor $\tau$---and hence larger discontinuities in the discrete solution. 
In a similar vein, \eqref{eq:S4} relates the range of values of the diffusion coefficient with the maximum separation between the computational and physical boundaries, thus making sure that the external patches are fine enough to resolve possible boundary behavior induced by large variations in diffusivity over the domain. Moreover, it sets a hard upper limit to the mechanism that allows for a larger separation by decreasing $\tau$.

By combining \eqref{eq:Cconstants} with \eqref{eq:Assumptions} it is not hard to show that, for $k>0$, $H^{\perp}_e$ must be bounded as
\[
H^{\perp}_e \leq \min \left\{ \frac{h^{\perp}_e}{(C_1C_2)^2}\left( \frac{ \overline{\kappa}^{-1}\underline{\kappa} } {k^4(k+1)^4(3\beta+1)^{2k}}\right)^{1/3},\, \frac{1}{3} \underline{\kappa}\,\overline{\tau}^{-1}\right\}.
\]
This expression provides insight into the way in which the physics of the problem---through the range of values for $\kappa$---interacts with the discretization---through the parameters $H^{\perp}_e$, $h^{\perp}_e$, $k$, $\beta$, and $\tau$---and determines the maximum separation between the physical boundary and that of an admissible triangulation. Of particular note is the role played by the polynomial degree of the approximation: for larger values of $k$ the distance between the mesh and the boundary must decrease. The reason for this will become apparent soon, as we will resort to extrapolation to approximate some quantities over the extension patches.
%
\subsection{{An equivalent mixed formulation and the transfer paths}\label{sec:TransferPaths}}

Having established the requirements for an admissible triangulation, and defined the extension patches $T^e_{ext}$ in such a way that for each of them there corresponds a single $T^e\in \mathcal T_h$, we can define a way to extend polynomial functions from the computational domain into $\Omega_h^c$. This extension process will enable us to transfer the boundary condition from $\Gamma$ into the computational boundary $\Gamma_h$. Let $p:T^e\to \mathbb R $ be a polynomial function and $T^e_{ext}$ an extension patch associated to $T^e$. We will define the extension $\boldsymbol E_h(p)$ of $p$ to $T^e_{ext}$ by extrapolation as follows:
 \begin{alignat*}{6}
E_h:\quad \mathbb{P}_k(T^e) &\,\longrightarrow &&\, \mathbb{P}_k(T^e\cup T^e_{ext}) \\
p(\boldsymbol y) \; \forall \, \boldsymbol y \in T^e &\, \longmapsto&&\, p(\boldsymbol y) \; \forall \, \boldsymbol y \in T^e\cup T^e_{ext}.
 \end{alignat*}
Where, to keep notation simple, a polynomial function $p$ should be understood as its extrapolation $E_h(p)$  whenever an evaluation outside of $\Omega_h$ is required, which should be clear from the context. For vector-valued polynomial functions, the extension is defined similarly component by component.

To avoid computing on a domain with curved boundary, we wish to pose the boundary vaue problem \eqref{eq:BVP} polygonal subdomain $\compD\subset\Omega$. This simpler geometry can then be discretized by a uniform triangulation of size satisfying the conditions outlined in sections \ref{sec:ComputationalDomain} and \ref{sec:ExtendedDomain}. 

Recasting \eqref{eq:BVP} in mixed form and restricting the resulting equivalent first order system to $\Omega_h$ leads to
	\begin{subequations}\label{eq. Grad-Shaf. numerical domain}
	\begin{align}
	&& && \bq +\kappa\,  \nabla u &= 0 & &\text{ in } \compD \label{eq. G-S. numerical domain_a}\\
	&& && \nabla \cdot \bq  &= f(u) & &\text{ in }   \compD, \label{eq. G-S. numerical domain_b} \\
	&& && u &= \varphi & &\text{ on } \Gamma_h:= \partial  \compD, \label{eq. G-S. numerical domain_c}
	\end{align}
	\end{subequations}
where the specific relation between $\kappa$, $u$ and $\nabla\,u$ has not been made explicit, and the---a priori unknown---function $\varphi$ encodes the restriction of $u$ to the computational boundary  $\Gamma_h$. We can recover $\varphi$ following the method proposed by \cite{CoReGu2009} (in one dimension) and extended to higher dimensions by \cite{CoSo2012}. The idea consists of transferring the Dirichlet data $g$ from $\Gamma$ to $\Gamma_h$ along segments called \textit{transfer paths} by computing a line integral of the flux $\boldsymbol q$. 

To be precise, given $\boldsymbol{x}\in \Gamma_h$ and $\overline{\boldsymbol{x}} \in \Gamma$, equation \eqref{eq. G-S. numerical domain_a} can be integrated along the segment connecting them.  Lets denote by $\boldsymbol{t}(\mbf{x})$ the unit vector anchored at $\mbf{x}$ pointing towards $\overline{\boldsymbol{x}}$, and by $l(\mbf{x})$ the length of the segment connecting them. We then have the following representation for $\varphi$:
	\begin{equation}\label{def:varphi-P1}
	\varphi:= g(\overline{\mbf{x}})+ \int_0^{l(\mbf{x})} (\kap^{-1} \,\bq)(\mbf{x} + \boldsymbol{t}(\mbf{x})s) \cdot  \boldsymbol{t}(\mbf{x}) ds.
	\end{equation}

Note that $\varphi$ depends on the values of either $u$ or $\nabla\,u$ (through $\kappa^{-1}$), and $\boldsymbol q$ over the extended domain $\Omega_h^c$. As such, we should write $\varphi = \varphi(u,\nabla\,u,\boldsymbol q, \boldsymbol x)$ however, to keep notation simple, we will abstain from this and will write simply $\varphi$. In a similar fashion, $g(\overline{\boldsymbol x})$ is in fact a function of $\boldsymbol x$, since the point $\overline{\boldsymbol{x}}$ varies smoothly with $\boldsymbol x$. To avoid the use of cumbersome notation we will write either $g(\overline{\boldsymbol x})$ or simply $\g := g(\overline{\boldsymbol x}(\boldsymbol x))$.
%
\subsection{Sobolev space notation}\label{sec:SpacesNorms}
To denote spaces of functions we will make use of the standard notation and terminology from Sobolev space theory. Let $\mathcal{O}$ be a domain in 
$\mathbb{R}^{d}$, and $\Sigma$ be either a Lipschitz curve (if $d=2$) or surface (if $d=3$); for scalar-valued functions and non zero real numbers $s$, we will use the spaces $ H^s(\mathcal O)$ and $H^s(\Sigma)$ with their usual definition, whereas for the case $s=0$ we will write simply $L^{2}(\cO)$ and $ L^{2}(\Sigma)$. The spaces of vector-valued functions will be denoted in bold face, therefore $\bH^{s}(\cO) := [H^{s}(\cO)]^{d}$ and $\bH^{s}(\Sigma) := [H^{s}(\Sigma)]^{d}$.

The $L^2$ inner products for both scalar and vector-valued functions on volumes and surfaces will be denoted by $(\cdot,\cdot)_{\mathcal O}$ and $\pdual{\cdot,\cdot}_{\Sigma}$ respectively. The associated norms will be denoted by $\|\cdot\|_{s,\cO}$ and $\|\cdot\|_{s,\Sigma}$ and simply $\|\cdot\|_{\cO}$ for the case $s=0$. As is common, will we write $|\cdot|_{s,\cO}$ for the $\bH^{s}$ and  $H^{s}$-semi norms.

Given a triangulation $\mathcal T_h$ we will define  the following mesh-dependent inner products over elements and edges
	\begin{equation*}
	(\cdot, \cdot)_{\mc{T}_h} := \sum_{T\in \mc{T}_h} (\cdot, \cdot)_T , \qquad \pdual{\cdot, \cdot}_{\partial \mc{T}_h} :=\sum_{T\in \mc{T}_h} \pdual{\cdot, \cdot}_{\partial T} \quad  \text{ and } \quad  \langle\cdot, \cdot\rangle_{\Gamma_h} := \sum_{e\in \mc{E}_h^{\partial}} \langle\cdot, \cdot\rangle_e.
	\end{equation*}	
These inner products induce mesh-dependent norms that will be denoted, respectively, by
    \begin{equation*}
    \| \cdot \|_{\compD} := \left( \sum_{T\in \mc{T}_h} \| \cdot \|_T^2 \right)^{1/2}, \qquad \| \cdot \|_{\partial \mc{T}_h} := \left( \sum_{T\in  \mc{T}_h} \| \cdot \|_{\partial T}^2 \right)^{1/2} \quad  \text{ and } \quad   \| \cdot \|_{\Gamma_h} := \left( \sum_{e\in \mc{E}_h^{\partial}} \| \cdot \|_e^2 \right)^{1/2} .
    \end{equation*}
In the forthcoming analysis, the expression $a\lesssim b$ should be understood as meaning $a\leq C b$ where $C$ is a positive constant independent of $h$. 

For the discrete formulations that will be introduced in the next sections, we will make use of the following finite dimensional spaces of piece-wise polynomial functions
    \begin{subequations}\label{eq:PolynomialSpaces}
	\begin{align}
	\mbf{V}_h &:= \{\mbf{v}\in \boldsymbol L^2(\mc{T}_h) : \mbf{v}|_T \in [\md{P}_k(T)]^d, \ \forall \ T \in \mc{T}_h \}, \\
	W_h &:= \{w\in L^2(\mc{T}_h) : w|_T \in \md{P}_k(T), \ \forall \ T \in \mc{T}_h \}, \\
	M_h &:= \{\mu\in L^2(\mc{E}_h) : \mu|_T \in \md{P}_k(e), \ \forall \ e \in \mc{E}_h \},
	\end{align}
	\end{subequations}
where, $\md{P}_k(T)$ denotes the space of polynomials of degree at most $k$ defined in $T\in \mc{T}_h$. Similarly, $\md{P}_k(e)$ denotes the space of polynomials of degree at most $k$ defined over a face $e\in \mc{E}_h$. 
%
\section{Non-linearities of the form $\kappa(u)$}\label{sec:DependenceOnU}
%
\subsection{The HDG formulation}\label{sec:DependenceOnU-HDGform}
%
We will first consider the case when the coefficient $\kappa$ depends on the solution in the form 
\begin{alignat*}{6}
\kappa:\,& L^2(\Omega) && \longrightarrow & \; L^\infty(\overline{\Omega}) \\
& u && \longmapsto & \kappa(u).
\end{alignat*}
For the analysis, we will require the existence of positive constants $\underline{\kap}$ and $\overline{\kap}$ such that for all $u \in L^2(\Omega)$
    \begin{equation}\label{eq:assumptions_kappa}
     \underline{\kap} \leq \kap(u) \leq\overline{\kap} \quad  \, \text{ almost everywhere in } \Omega.
    \end{equation}	
Moreover, $\kap$ will be assumed to be Lipschitz-continuous on $L^2(\Omega)$, i.e, there exists $\tilde{L}>0$ such that
	\begin{equation}\label{eq:Lipschitz_k}
	\|\kap(u_1) - \kap(u_2)\|_{L^{\infty}(\Omega)} \leq \tilde{L} \|u_1-u_2\|_{L^2(\Omega)} \qquad \forall \, u_1,u_2 \in L^2(\Omega).
	\end{equation}
The two conditions above, together, imply the existence of constants $\what{L}$ and  $L$ such that 
	\begin{alignat}{6}
	\label{eq:Lipschitz_k1/2}
	\|\kap^{1/2}(u_1) - \kap^{1/2}(u_2)\|_{L^{\infty}(\Gamma)} \leq\,& \what{L} \|u_1-u_2\|_{L^2(\Omega)} &\qquad& \forall \, u_1,u_2 \in L^2(\Omega), \\[2ex]
	\label{eq:Lipschitz_k-1}
	\|\kap^{-1}(u_1) - \kap^{-1}(u_2)\|_{L^{\infty}(\overline{\Omega})} \leq\,& L \|u_1-u_2\|_{L^2(\Omega)} &\qquad& \forall \, u_1,u_2 \in L^2(\Omega).
	\end{alignat}
Note that all these assumptions imply the Lipschitz continuity of $\kappa, \kappa^{1/2}$, and $\kappa^{-1}$ on the subdomain $\Omega_h \subset \Omega$ with corresponding Lipschitz  constants equal to or smaller than those stated above.

Before introducing the discrete formulation we will recall here the mixed form \eqref{eq. Grad-Shaf. numerical domain}, but now we make explicit the dependence $\kappa = \kappa(u)$
	\begin{subequations}\label{eq:mixed}
	\begin{align}
	&& && \bq + \kap(u)\,  \nabla u &= 0 & &\text{ in } \Omega_h, &&\label{eq:mixed_a}\\
	&& && \nabla \cdot \bq  &= f(u) & &\text{ in }  \Omega_h, &&\label{eq:mixed_b}\\
	&& && u &= \varphi &  &\text{ on } \partial \Omega_h .&&\label{eq:mixed_c}
	\end{align}
	\end{subequations}
The boundary data $\varphi$ on the computational boundary $\Gamma_h$ is transferred according to \eqref{def:varphi-P1}. 

Taking an admissible triangulation $\mathcal T_h$ of the computational domain $\Omega_h$, the HDG discretization of \eqref{eq:mixed} reads: Find $(\bq_h, u_h, \what{u}_h) \in \mbf{V}_h\times W_h \times M_h$, such that
	\begin{subequations}\label{eq:HDG-P1}
	\begin{align}
	(\kap^{-1}(u_h)\,  \bq_h, \mbf{v})_{\mc{T}_h} - (u_h, \nabla \cdot \mbf{v})_{\mc{T}_h} + \langle\what{u}_h, \mbf{v} \cdot \mbf{n} \rangle_{\partial \mc{T}_h} &=  0,  \\
	-(\bq_h, \nabla w)_{\mc{T}_h} + \langle \what{\bq}_h \cdot \mbf{n},w \rangle_{\partial \mc{T}_h} &=  (f(u_h),w)_{\mc{T}_h}, \label{eq:HDG-P1_b} \\
	\label{eq:HDG-P1_c}
	\langle \what{u}_h,\mu\rangle_{\Gamma_h} &=  \langle \varphi_{h}(u_h),\mu\rangle_{\Gamma_h},  \\
	\langle \what{\bq}_h \cdot \mbf{n}, \mu \rangle_{\partial \mc{T}_h \setminus \Gamma_h} &= 0,
	\end{align}
for all $(\mbf{v}, w, \mu)\in \mbf{V}_h \times W_h \times M_h$. Here 
\[
    \what{\bq}_h\cdot \mbf{n} := \bq_h \cdot \mbf{n} +  \tau(u_h - \what{u}_h) \quad \textrm{on}\quad \partial \mc{T}_h
\]
with $\tau$ being a positive stabilization function, whose maximum will be denoted by $\overline{\tau}$. The approximate boundary condition $\varphi_h$ on the right hand side of \eqref{eq:HDG-P1_c} is given by the discrete counterpart of \eqref{def:varphi-P1}
	\begin{equation}\label{def:varphi_{h}-P1}
	\varphi_{h}(u_h)(\mbf{x}):=  g(\overline{\mbf{x}})+\int_0^{l(\mbf{x})} (\kap^{-1}(u_h)\,E_h\bq_h)(\mbf{x} + \boldsymbol{t}(\mbf{x})s) \cdot  \boldsymbol{t}(\mbf{x})) ds, \quad \textrm{for} \quad \mbf{x} \in \Gamma_h.
	\end{equation}
	\end{subequations}
In the definition above, we have used the extrapolation $E_h\bq_h$ due to the fact that the approximation $\bq_h$ is available only inside of the computational domain $\Omega_h$, but the transfer paths along which the integral is computed are defined over the complementary extended region $\Omega_h^c$.

\subsection{Well-posedness}\label{sec:well-posedness}
In this section we employ a Banach fixed-point argument to ensure the well-posedness of the discrete problem \eqref{eq:HDG-P1}. To that end we will define an operator $\mc{J}:  W_h  \to  W_h $ mapping $\zeta$ to the second component of the triplet $(\bq, u, \what{u})\in \mbf{V}_h\times W_h \times M_h$ satisfying, for all $(\mbf{v}, w, \mu)\in \mbf{V}_h \times W_h \times M_h$, the HDG system \eqref{eq:HDG-P1} where the source has been evaluated at $\zeta$, namely
	\begin{subequations}\label{eq:Fixed_point}
	\begin{align}
	(\kap^{-1}(\zeta)\, \bq, \mbf{v})_{\mc{T}_h} - (u, \nabla \cdot \mbf{v})_{\mc{T}_h} + \pdual{\what{u}, \mbf{v} \cdot \mbf{n}}_{\partial \mc{T}_h} &= 0,  \\
	-(\bq, \nabla w)_{\mc{T}_h} + \langle  \what{\bq} \cdot \mbf{n},w\rangle_{\partial \mc{T}_h}  &= (f(\zeta),w)_{\mc{T}_h}, \\
    \langle \what{u},\mu\rangle_{\Gamma_h} &=  \langle \varphi(\zeta),\mu\rangle_{\Gamma_h}, \\
	\langle \what{\bq} \cdot \mbf{n}, \mu\rangle_{\partial \mc{T}_h \setminus \Gamma_h} &= 0. 
	\end{align}
	\end{subequations}
Above, the term $\varphi(\zeta)$ corresponds to the boundary condition transferred to the computational domain by means of \eqref{def:varphi_{h}-P1}. The mapping $\mathcal J$ is well defined, as the linearized system \eqref{eq:Fixed_point} is uniquely solvable as proven in \cite{CoQiuSo2014}.

The main result of this section---that the mapping $\mathcal J$ defined above is a contraction---relies on the validity of a particular inequality---estimate \eqref{eq:KeyStep} below---but is otherwise a simple argument. Since the proof of \eqref{eq:KeyStep} requires a sequence of technical arguments, in the interest of clarity (we will first prove the main theorem assuming that the aforementioned inequality is valid. After having established the well posedness of the discrete problem, the reminder of the section will be devoted to verifying the validity of \eqref{eq:KeyStep}. This will be finally established in Lemma \ref{lem:StabilityS}, after a series of auxiliary results.
\begin{thm}[Well-posedness of the discrete problem]\label{thm:PuntoFijo}
Suppose that Assumptions \eqref{eq:Assumptions} are satisfied and that additionally
\begin{align*}
(\sqrt{3}\, \what{c} + \overline{\kap}^{1/2}\, R^{1/2} )\, \|l^{-1/2} \, \g\|_{\Gamma_h}\, \what{L}\, h^{1/2} < 1/4 , \\
\max\{ \what{c}^2h,1 \}\, L_{f}< 1/8,
\end{align*}
where $\widehat{c}$ is the constant given in Lemma \ref{lem:StabilityS}. Then $\mc{J}$ is a contraction operator.
\end{thm}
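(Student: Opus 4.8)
The plan is to derive the contraction property directly from a stability (energy) identity for the difference of two linearized problems. Fix $\zeta_1,\zeta_2\in W_h$ and let $(\bq_i,u_i,\what{u}_i)$ be the corresponding solutions of \eqref{eq:Fixed_point}, so that $\mc{J}(\zeta_i)=u_i$. Writing $\delta\bq:=\bq_1-\bq_2$, $\delta u:=u_1-u_2$ and $\delta\what{u}:=\what{u}_1-\what{u}_2$, I would subtract the two instances of \eqref{eq:Fixed_point}. The only care needed is to split the nonlinear coefficient term as $\kap^{-1}(\zeta_1)\bq_1-\kap^{-1}(\zeta_2)\bq_2=\kap^{-1}(\zeta_1)\delta\bq+(\kap^{-1}(\zeta_1)-\kap^{-1}(\zeta_2))\bq_2$, so that the difference system inherits the linear structure of \eqref{eq:Fixed_point} driven by the three data perturbations $f(\zeta_1)-f(\zeta_2)$, $(\kap^{-1}(\zeta_1)-\kap^{-1}(\zeta_2))\bq_2$, and $\varphi(\zeta_1)-\varphi(\zeta_2)$.

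First I would run the standard HDG energy argument: test the first difference equation with $\mbf{v}=\delta\bq$, the second with $w=\delta u$, integrate by parts, use the conservativity relation (the fourth equation tested with $\delta\what{u}$ on interior faces) to cancel the interior numerical-flux contributions, and substitute $\what{\bq}\cdot\mbf{n}=\bq\cdot\mbf{n}+\tau(u-\what{u})$. This produces the energy identity
\begin{equation*}
(\kap^{-1}(\zeta_1)\delta\bq,\delta\bq)_{\mc{T}_h}+\langle\tau(\delta u-\delta\what{u}),\delta u-\delta\what{u}\rangle_{\partial\mc{T}_h}=(f(\zeta_1)-f(\zeta_2),\delta u)_{\mc{T}_h}-((\kap^{-1}(\zeta_1)-\kap^{-1}(\zeta_2))\bq_2,\delta\bq)_{\mc{T}_h}-\langle\delta\what{u},\delta\what{\bq}\cdot\mbf{n}\rangle_{\Gamma_h},
\end{equation*}
whose left-hand side defines the natural energy seminorm $\triple{(\delta\bq,\delta u,\delta\what{u})}^2$; the lower bound $\kap^{-1}(\zeta_1)\ge\overline{\kap}^{-1}$ gives coercivity in $\delta\bq$.

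Next I would bound the three terms on the right. The first two are immediate: \eqref{Lipschitz_F} gives $|(f(\zeta_1)-f(\zeta_2),\delta u)|\le L_{f}\|\zeta_1-\zeta_2\|_{\compD}\|\delta u\|_{\compD}$, while \eqref{eq:Lipschitz_k-1}, combined with an a priori stability bound on the linearized flux $\|\bq_2\|_{\compD}$, controls $|((\kap^{-1}(\zeta_1)-\kap^{-1}(\zeta_2))\bq_2,\delta\bq)|\le L\|\zeta_1-\zeta_2\|_{\compD}\|\bq_2\|_{\compD}\|\delta\bq\|_{\compD}$. The boundary term $\langle\delta\what{u},\delta\what{\bq}\cdot\mbf{n}\rangle_{\Gamma_h}$ is the crux: using $\langle\delta\what{u},\mu\rangle_{\Gamma_h}=\langle\varphi(\zeta_1)-\varphi(\zeta_2),\mu\rangle_{\Gamma_h}$ together with the representation \eqref{def:varphi_{h}-P1} (whose $\g$ contributions cancel in the difference), this term is expressed through the transfer-path integrals of $\kap^{-1}(\zeta_1)E_h\delta\bq+(\kap^{-1}(\zeta_1)-\kap^{-1}(\zeta_2))E_h\bq_2$ over $\compD^c$. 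This is exactly the quantity controlled by the key estimate \eqref{eq:KeyStep} of Lemma \ref{lem:StabilityS}, which—after invoking the extrapolation bounds \eqref{eq:Cconstants} and the proximity Assumptions \eqref{eq:Assumptions} from Section \ref{sec:ExtendedDomain}—dominates it by the factor $(\sqrt{3}\,\what{c}+\overline{\kap}^{1/2}R^{1/2})\|l^{-1/2}\g\|_{\Gamma_h}\what{L}\,h^{1/2}$ times the energy seminorm times $\|\zeta_1-\zeta_2\|_{\compD}$, the constant $\what{c}$ also furnishing the bound of $\|\delta u\|_{\compD}$ by the energy seminorm that is needed to close the loop.

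Finally I would combine the estimates, use Young's inequality to split each product into an energy factor (absorbed on the left) and a $\|\zeta_1-\zeta_2\|_{\compD}$ factor, and isolate $\triple{(\delta\bq,\delta u,\delta\what{u})}$. The first smallness hypothesis makes the transferred-boundary contribution strictly subunit, while the second, $\max\{\what{c}^2h,1\}\,L_{f}<1/8$, controls the source-term contribution—the factor $\what{c}^2h$ reflecting the passage from the energy seminorm back to $\|\delta u\|_{\compD}$. Tracking constants then yields $\|\mc{J}(\zeta_1)-\mc{J}(\zeta_2)\|_{\compD}=\|\delta u\|_{\compD}\le L_{\mc{J}}\|\zeta_1-\zeta_2\|_{\compD}$ with $L_{\mc{J}}<1$. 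I expect the genuine obstacle to be the boundary term: everything else is bookkeeping with Young's inequality and the Lipschitz bounds, but controlling $\langle\delta\what{u},\delta\what{\bq}\cdot\mbf{n}\rangle_{\Gamma_h}$ requires the full strength of the extrapolation estimates and the geometric assumptions, which is precisely why the proof of \eqref{eq:KeyStep} is deferred to the chain of auxiliary lemmas culminating in Lemma \ref{lem:StabilityS}.
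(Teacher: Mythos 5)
Your proposal does not follow the paper's route, and as written it has a structural gap. The paper's proof of Theorem \ref{thm:PuntoFijo} is deliberately short: it takes the estimate \eqref{eq:KeyStep} as given (its proof is deferred to Lemma \ref{lem:StabilityS}), applies the Lipschitz bounds \eqref{Lipschitz_F} and \eqref{eq:Lipschitz_k1/2} to the right-hand side, and concludes with the two smallness hypotheses. You instead run an energy argument on the difference system, and the central flaw is where you invoke \eqref{eq:KeyStep}: you cite it as the bound for the boundary pairing $\langle \delta\what{u},\delta\what{\bq}\cdot\mbf{n}\rangle_{\Gamma_h}$, but \eqref{eq:KeyStep} is not an estimate for that quantity at all --- it is the bound on $\|u_1-u_2\|_{\compD}$ itself, i.e.\ precisely the conclusion you are trying to reach. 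This makes your argument circular in one direction and redundant in the other: if \eqref{eq:KeyStep} may be cited, the theorem follows in two lines with no energy identity (that is the paper's proof); if it may not, your energy identity cannot close the argument, because its left-hand side controls only $\|\kap^{-1/2}(\zeta_1)\,\delta\bq\|_{\compD}$ and $\|\tau^{1/2}(\delta u-\delta\what{u})\|_{\partial\mc{T}_h}$, and in an HDG scheme the volumetric norm $\|\delta u\|_{\compD}$ is \emph{not} dominated by this energy seminorm. Recovering $\|\delta u\|_{\compD}$ requires the duality argument with the dual problem \eqref{eq:DualProblem} under the regularity hypothesis \eqref{regularity dual problem}; that is the entire content of Lemma \ref{lem:StabilityS}, and it cannot be compressed into the remark that ``the constant $\what{c}$ also furnishes the bound of $\|\delta u\|_{\compD}$.''

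Two further points would need repair even if you supplied the duality step. First, the coefficient-perturbation term $((\kap^{-1}(\zeta_1)-\kap^{-1}(\zeta_2))\bq_2,\delta\bq)_{\mc{T}_h}$ created by your splitting is bounded by $L\,\|\zeta_1-\zeta_2\|_{\compD}\,\|\bq_2\|_{\compD}\,\|\delta\bq\|_{\compD}$, so your contraction constant carries the factor $\|\bq_2\|_{\compD}$. The only a priori bound available for $\|\bq_2\|_{\compD}$ (Lemma \ref{lem:EstimateNormH} combined with Lemma \ref{lem:StabilityS}) grows with $\|f(\zeta_2)\|_{\compD}$, hence with $\|\zeta_2\|_{\compD}$; since $\zeta_2$ ranges over all of $W_h$, this does not yield a contraction constant uniform over $W_h$, and no hypothesis of the theorem restores uniformity. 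Second, you assert that the $\g$ contributions cancel in $\varphi(\zeta_1)-\varphi(\zeta_2)$. That is true for the transfer integrals themselves, but then your argument could never produce the first smallness condition, $(\sqrt{3}\,\what{c}+\overline{\kap}^{1/2}R^{1/2})\,\|l^{-1/2}\g\|_{\Gamma_h}\,\what{L}\,h^{1/2}<1/4$, which manifestly involves $\g$ and the Lipschitz constant $\what{L}$ of $\kap^{1/2}$. The boundary datum re-enters through the flux representation $\what{\bq}_i\cdot\mbf{n}=\kap(\zeta_i)\,l^{-1}(\varphi(\zeta_i)-\g)-\delta_{\bq_i}+\tau(u_i-\what{u}_i)$ used in Lemma \ref{lem:EstimateNormH}, whose difference carries the term $(\kap(\zeta_1)-\kap(\zeta_2))\,l^{-1}\,\g$; this is exactly the origin of the factor $\|(\kap^{1/2}(\zeta_1)-\kap^{1/2}(\zeta_2))\,l^{-1/2}\,\g\|_{\Gamma_h}$ in \eqref{eq:KeyStep}. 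Your outline, as written, loses this term and with it the role of the first smallness hypothesis.
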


\begin{proof}
Let $\zeta_1, \zeta_2 \in W_h$ and define $u_1:=\mc{J}(\zeta_1)$ and $u_2:=\mc{J}(\zeta_2)$. Then $u_1$ and $u_2$ are the second components of solutions to \eqref{eq:Fixed_point} and Lemma \ref{lem:StabilityS} guarantees that
    \begin{align}
    \nonumber
    &\|\mc{J}(\zeta_1)-\mc{J}(\zeta_2)\|_{\compD} = \| u_1-u_2 \|_{\compD} \\
    \label{eq:KeyStep}
    &\quad \leq  4\, \max\{ \what{c}^2h,1 \}\,  \|f(\zeta_1) - f(\zeta_2)\|_{\compD} + 2\, (\sqrt{3}\, \what{c} + \overline{\kap}^{1/2}\, R^{1/2} )\, h^{1/2}\, \|(\kap^{1/2}(\zeta_1) - \kap^{1/2}(\zeta_2) ) \,  l^{-1/2} \,  \g \| _{\Gamma_h}.
    \end{align}
Then, applying the Lipschitz-continuity of $f$ and $\kap^{1/2}$---given respectively in \eqref{Lipschitz_F} and  \eqref{eq:Lipschitz_k}---we get   
    \begin{align*}
    &\quad \leq 4 \, \max\{ \what{c}^2h,1 \}\, L_{f}\,  \|\zeta_1 - \zeta_2\|_{\compD} + 2\, (\sqrt{3}\, \what{c} + \overline{\kap}^{1/2}\, R^{1/2} )\,  h^{1/2}\, \|\kap^{1/2}(\zeta_1) - \kap^{1/2}(\zeta_2)  \|_{L^{\infty}(\Gamma_h)} \, \| l^{-1/2} \,  \g \|_{\Gamma_h} \\
    &\quad \leq   4\, \max\{ \what{c}^2h,1 \}\, L_{f}\,  \|\zeta_1-\zeta_2\|_{\compD} + 2\, (\sqrt{3}\, \what{c} + \overline{\kap}^{1/2}\, R^{1/2} )\, \what{L}\, h^{1/2}\, \|\zeta_1 - \zeta_2 \|_{\compD} \, \| l^{-1/2} \,  \g \|_{\Gamma_h}.
\end{align*}
The result follows from the hypothesis for $\what{L}$ and $L_{f}$ . 
\end{proof}
Combined with a standard fixed-point argument, the theorem above guarantees the existence and uniqueness of the solution to the discrete HDG system. We will now direct our efforts to showing that inequality \eqref{eq:KeyStep} holds. To that end we will make use of the following auxiliary function and its properties listed in the lemma below---the proof of which can be found on \cite[Lemma 5.2]{CoQiuSo2014}.
\begin{lem}
Consider $\mbf{x}\in\Gamma_h$ and any smooth enough function $\mbf{v}$ defined in $T^e\cup T_{ext}^e$, and define
	\begin{equation}\label{def:delta}
	\delta_{\mbf{v}} (\mbf{x}) := \dfrac{1}{l(\mbf{x})} \int_0^{l(\mbf{x})} [\mbf{v}(\mbf{x} + \mbf{n}s) - \mbf{v}(\mbf{x}) ] \cdot \mbf{n}\, ds.
	\end{equation}
The following estimates hold for each $e\in \mc{E}_h^{\partial}$:
	\begin{subequations}\label{ineq:deltav}
	\begin{align}
	\label{ineq: est delta 1}
	\| l^{1/2} \, \delta_{\mbf{v}} \|_e &\leq \dfrac{1}{\sqrt{3}} \, r_e^{3/2} \, C_{ext}^e \, C_{inv}^e \, \|\mbf{v}\|_{T^e}  & &\forall \ \mbf{v} \in [\md{P}_k(T)]^d, \\
	\label{ineq: est delta 2}
	\| l^{1/2}\,\delta_{\mbf{v}} \|_e &\leq \dfrac{1}{\sqrt{3}} \, r_e \, \| h^{\perp} \partial_n \mbf{v} \cdot \mbf{n}\|_{T_{ext}^e} & &\forall \ \mbf{v} \in [H^1(T)]^d. \\
	\label{ineq: est delta 3}
	\|l^{1/2}\, \delta_{\mbf{v}}\|_{\infty} &\leq \dfrac{1}{\sqrt{3}} \, r_e \,  \sup_{\mbf{x}\in e} \|h_e^{\perp}\,\partial_n \mbf{v}\cdot \mbf{n} \|_{l(\mbf{x})} &&\forall \ \mbf{v} \in [H^1(T)]^d.
	\end{align}
	\end{subequations}
\end{lem}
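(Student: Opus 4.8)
The plan is to derive all three estimates from a single pointwise bound obtained along each transfer path, and then to convert the resulting right-hand sides into the stated norms using elementary geometry together with the two inverse-type constants $C_{ext}^e$ and $C_{inv}^e$. First I would rewrite the integrand with the fundamental theorem of calculus: since $\mbf{n}$ is constant, for fixed $\mbf{x}\in e$ we have $[\mbf{v}(\mbf{x}+\mbf{n}s)-\mbf{v}(\mbf{x})]\cdot\mbf{n}=\int_0^s (\partial_n\mbf{v})(\mbf{x}+\mbf{n}\tau)\cdot\mbf{n}\,d\tau$. Abbreviating $g(\tau):=(\partial_n\mbf{v})(\mbf{x}+\mbf{n}\tau)\cdot\mbf{n}$ and interchanging the order of integration yields the compact form $l(\mbf{x})\,\delta_{\mbf{v}}(\mbf{x})=\int_0^{l(\mbf{x})}(l(\mbf{x})-\tau)\,g(\tau)\,d\tau$. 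Cauchy--Schwarz, together with $\int_0^{l}(l-\tau)^2\,d\tau=l^3/3$, then gives the key pointwise estimate $l(\mbf{x})\,\delta_{\mbf{v}}(\mbf{x})^2\le\tfrac13\,l(\mbf{x})^2\,\|\partial_n\mbf{v}\cdot\mbf{n}\|_{l(\mbf{x})}^2$, which is the engine behind all three bounds.

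For \eqref{ineq: est delta 3} I would simply bound $l(\mbf{x})\le H_e^{\perp}=r_e\,h_e^{\perp}$ (which holds because, under the assumption $\mbf{t}=\mbf{n}$ in \eqref{eq:S1}, $H_e^{\perp}$ is precisely the largest normal extent of the path over $e$) and take the supremum over $\mbf{x}\in e$, producing the factor $\tfrac{1}{\sqrt3}r_e$ and the segment norm on the right. For \eqref{ineq: est delta 2} I would instead integrate the pointwise bound over $e$; after using $l\le r_e h_e^{\perp}$, the remaining double integral $\int_e\int_0^{l(\mbf{x})}(\partial_n\mbf{v}\cdot\mbf{n})^2\,d\tau\,d\mbf{x}$ is exactly $\|\partial_n\mbf{v}\cdot\mbf{n}\|_{T^e_{ext}}^2$, since the map $(\mbf{x},\tau)\mapsto\mbf{x}+\mbf{n}\tau$ parameterizes $T^e_{ext}$ with unit Jacobian (again because $\mbf{n}\perp e$), which gives \eqref{ineq: est delta 2} as stated.

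Finally, \eqref{ineq: est delta 1} I would deduce from \eqref{ineq: est delta 2} in the polynomial case. Writing $\partial_n\mbf{v}\cdot\mbf{n}=(\partial_n\mbf{v})\cdot\mbf{n}_e$ with $\partial_n\mbf{v}\in\mathcal{V}^k$, the definition of $C_{ext}^e$ transfers the extension-patch norm back to $T^e$, namely $\|\partial_n\mbf{v}\cdot\mbf{n}\|_{T^e_{ext}}\le\sqrt{r_e}\,C_{ext}^e\,\|\partial_n\mbf{v}\cdot\mbf{n}\|_{T^e}$. The inverse estimate encoded in $C_{inv}^e$, applied with $\psi=\mbf{v}\cdot\mbf{n}_e$, then gives $h_e^{\perp}\|\partial_n\mbf{v}\cdot\mbf{n}\|_{T^e}\le C_{inv}^e\,\|\mbf{v}\cdot\mbf{n}\|_{T^e}$; combining the two with $\|\mbf{v}\cdot\mbf{n}\|_{T^e}\le\|\mbf{v}\|_{T^e}$ collects the power $r_e^{3/2}$ and the product $C_{ext}^e C_{inv}^e$, yielding \eqref{ineq: est delta 1}.

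I expect the only genuinely delicate point to be the geometric change of variables in \eqref{ineq: est delta 2}: the identification of $\int_e\int_0^{l(\mbf{x})}(\cdots)\,d\tau\,d\mbf{x}$ with the $L^2$ norm over $T^e_{ext}$, and the accompanying bound $l(\mbf{x})\le H_e^{\perp}$, both rest on the normal-alignment assumption \eqref{eq:S1}; without it one would pick up a factor $(\mbf{t}\cdot\mbf{n})^{-1}$ and a non-unit Jacobian. The remaining care is bookkeeping: verifying that $\partial_n\mbf{v}$ (read through the extrapolation $E_h$) is an admissible competitor in the suprema defining $C_{ext}^e$ and $C_{inv}^e$, with the degenerate case $\partial_n\mbf{v}\cdot\mbf{n}\equiv0$ being trivial since then $\delta_{\mbf{v}}\equiv0$.
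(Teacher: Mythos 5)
Your proof is correct and takes essentially the same route as the source of this result: the paper does not prove the lemma itself but quotes it from \cite[Lemma 5.2]{CoQiuSo2014}, and the argument there is precisely yours --- the fundamental theorem of calculus along each normal transfer path, Cauchy--Schwarz producing the factor $1/\sqrt{3}$ via $\int_0^l(l-\tau)^2\,d\tau = l^3/3$, the unit-Jacobian identification of the face-times-path integral with the $L^2(T^e_{ext})$ norm under the alignment assumption \eqref{eq:S1}, and then the definitions of $C_{ext}^e$ and $C_{inv}^e$ (applied to $\partial_n\mbf{v}$ and $\mbf{v}$ respectively) to pass back to $\|\mbf{v}\|_{T^e}$ in the polynomial case. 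Your bookkeeping on the two delicate points --- the degenerate case $\partial_n\mbf{v}\cdot\mbf{n}_e\equiv 0$, where $\delta_{\mbf{v}}\equiv 0$ trivially, and the fact that without \eqref{eq:S1} one would incur a factor $(\mbf{t}\cdot\mbf{n})^{-1}$ and a non-unit Jacobian --- is also exactly right.
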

For the subsequent analysis, we will also make use of the following norm. Given$(\mbf{v}, w, \mu)\in \mbf{V}_h \times W_h \times M_h$ and $\xi\in  M_h$ we define
    \begin{align}\label{def:triple_norm-P1}
          \triple{(\mbf{v},w,\mu)}_{\xi} &:= \left( \| \kap^{-1/2}(\xi) \mbf{v} \|_{\compD}^2 + \|\tau^{1/2} \, w\|_{\partial \mc{T}_h}^2 + \| \kap^{1/2}(\xi)\,  l^{-1/2} \, \mu(\xi) \|_{\Gamma_h}^2 \right)^{1/2}.
    \end{align}	
The proof of \eqref{eq:KeyStep} requires considering the solution $\phi$ to an auxiliary problem (c.f. \eqref{eq:DualProblem} ) and using a duality argument that connects a stability estimate for $\phi$ with our variables of interest. This will be done in Lemma \ref{lem:StabilityS}. Lemmas \ref{lem:EstimateVarphi} and \ref{lem:EstimateNormH} below establish estimates relating the norm \eqref{def:triple_norm-P1} of the solution $(\boldsymbol q, u, \widehat{u})$ to problem data that will be used in the final step of Lemma \ref{lem:StabilityS}.
\begin{lem}\label{lem:EstimateVarphi}
Let $\varphi$ be the transferred boundary condition appearing in \eqref{eq:Fixed_point} and suppose that assumptions \eqref{eq:Assumptions}  are satisfied. It holds 
	\begin{align*}
	\langle \varphi(\zeta) , \delta_{\bq }\rangle_{\Gamma_h} &\leq \dfrac{1}{6} \, \|\kap^{1/2}(\zeta)\, l^{-1/2}\, \varphi(\zeta)\|^2_{\Gamma_h} + \dfrac{1}{2}\,  \|\kap^{-1/2}(\zeta)\,  \bq\|_{\compD}^2, \\[2ex]
	\langle \varphi(\zeta), \tau(u - \what{u})\rangle_{\Gamma_h} &\leq \dfrac{1}{6} \|\kap^{1/2}(\zeta)\, l^{-1/2}\, \varphi(\zeta)\|^2_{\Gamma_h} + \dfrac{1}{2} \|\tau^{1/2} (u - \what{u})\|^2_{\partial \mc{T}_h}, \\[2ex]
   \langle\varphi(\zeta), \kap(\zeta) \, l^{-1} \, \g\rangle_{\Gamma_h} &\leq \dfrac{1}{6} \|\kap^{1/2}(\zeta)\, l^{-1/2}\, \varphi(\zeta)\|^2_{\Gamma_h} + \dfrac{3}{2} \|\kap^{1/2}(\zeta) \, l^{-1/2}\,  \g\|^2_{\Gamma_h},
	\end{align*}
where $\g(\mbf{x})=g(\overline{\mbf{x}}(\mbf{x})) \,\forall\,  \mbf{x}\in \Gamma_h$.
\end{lem}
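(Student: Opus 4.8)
The plan is to handle the three bounds in a unified manner: each is a Cauchy--Schwarz estimate on $\Gamma_h$ followed by Young's inequality, where the weight $\kap^{1/2}(\zeta)\,l^{-1/2}$ is split off the factor $\varphi(\zeta)$ so that the first term of every bound is forced to be $\tfrac16\|\kap^{1/2}(\zeta)\,l^{-1/2}\,\varphi(\zeta)\|_{\Gamma_h}^2$. The genuine work lies in controlling the complementary factor in each pairing, and this is exactly where the geometric hypotheses \eqref{eq:Assumptions} enter. Throughout I would use Young's inequality in the form $ab\le\tfrac{\epsilon}{2}a^2+\tfrac{1}{2\epsilon}b^2$ with the single choice $\epsilon=\tfrac13$, together with the two-sided bound $\underline{\kap}\le\kap(\zeta)\le\overline{\kap}$ and $\tau\le\overline{\tau}$.

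For the first inequality I would write $\langle\varphi(\zeta),\delta_{\bq}\rangle_{\Gamma_h}=\langle \kap^{1/2}(\zeta)\,l^{-1/2}\varphi(\zeta),\,\kap^{-1/2}(\zeta)\,l^{1/2}\,\delta_{\bq}\rangle_{\Gamma_h}$ and apply Cauchy--Schwarz. The factor $\|\kap^{-1/2}(\zeta)\,l^{1/2}\,\delta_{\bq}\|_{\Gamma_h}$ is controlled by using $\kap^{-1}(\zeta)\le\underline{\kap}^{-1}$, then the delta estimate \eqref{ineq: est delta 1} edge by edge, and finally assumption \eqref{eq:S4} in the form $r_e^3(C_{ext}^eC_{inv}^e)^2\le\underline{\kap}\,\overline{\kap}^{-1}$. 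Summing over $e\in\mc{E}_h^{\partial}$ and using $\overline{\kap}^{-1}\|\bq\|_{\compD}^2\le\|\kap^{-1/2}(\zeta)\,\bq\|_{\compD}^2$ yields $\|\kap^{-1/2}(\zeta)\,l^{1/2}\,\delta_{\bq}\|_{\Gamma_h}\le\tfrac1{\sqrt3}\|\kap^{-1/2}(\zeta)\,\bq\|_{\compD}$, and Young's inequality with $\epsilon=\tfrac13$ produces exactly the first claimed bound. For the second inequality I would pair $\kap^{1/2}(\zeta)\,l^{-1/2}\varphi(\zeta)$ against $\kap^{-1/2}(\zeta)\,l^{1/2}\,\tau(u-\what{u})$; here the new ingredient is that under the convention \eqref{eq:S1} the path length satisfies $l(\mbf{x})\le H_e^{\perp}$, so \eqref{eq:S3} and $\tau\le\overline{\tau}$ give $\underline{\kap}^{-1}\,l\,\tau\le\tfrac13$ on each face. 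This converts $\|\kap^{-1/2}(\zeta)\,l^{1/2}\,\tau(u-\what{u})\|_{\Gamma_h}^2$ into $\tfrac13\|\tau^{1/2}(u-\what{u})\|_{\partial\mc{T}_h}^2$ (using $\mc{E}_h^{\partial}\subset\partial\mc{T}_h$), and Young's inequality closes the estimate. The third inequality is the easiest: from $\langle\varphi(\zeta),\kap(\zeta)\,l^{-1}\,\g\rangle_{\Gamma_h}=\langle\kap^{1/2}(\zeta)\,l^{-1/2}\varphi(\zeta),\,\kap^{1/2}(\zeta)\,l^{-1/2}\,\g\rangle_{\Gamma_h}$, Cauchy--Schwarz followed directly by Young's inequality with $\epsilon=\tfrac13$ gives the stated constants $\tfrac16$ and $\tfrac32$.

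The main obstacle is the first bound, where the boundary quantity $\delta_{\bq}$ must be tied back to the volume norm of $\bq$: everything hinges on the scaling $r_e^3(C_{ext}^eC_{inv}^e)^2\le\underline{\kap}\,\overline{\kap}^{-1}$ supplied by \eqref{eq:S4}, which is precisely what makes the constant $\tfrac1{\sqrt3}$ (and hence the final $\tfrac12$) come out uniformly in the triangulation. The remaining manipulations are routine applications of the $\kap$-bounds, the bound on $\tau$, and Young's inequality, so no further subtlety is expected.
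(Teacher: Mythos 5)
Your proposal is correct and follows essentially the same route as the paper's (very terse) proof: Cauchy--Schwarz with the weight $\kap^{1/2}(\zeta)\,l^{-1/2}$ split off, the delta estimate \eqref{ineq: est delta 1} combined with assumption \eqref{eq:S4} to absorb $r_e^{3/2}C_{ext}^e C_{inv}^e$ for the first bound, assumption \eqref{eq:S3} (with $l\leq H_e^\perp$ under \eqref{eq:S1}) for the second, and Young's inequality alone for the third. You fill in the constant-tracking details the paper omits, and your verification that the choices $\epsilon=\tfrac13$ reproduce the stated constants $\tfrac16$, $\tfrac12$, $\tfrac32$ is accurate.
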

\begin{proof}
The first inequality is obtained after applying Young's inequality and estimate \eqref{ineq: est delta 1}, whereas the second inequality follows from assumption \eqref{eq:S2} and \eqref{eq:S3}. The third inequality follows from Young's inequality exclusively.
\end{proof}
\begin{lem}\label{lem:EstimateNormH} 
If Assumptions \eqref{eq:Assumptions} hold, then
	\begin{align*}
\triple{(\bsy{q}, u-\what{u}, \varphi)}_{\zeta}^2 &\leq 2\|f(\zeta)\|_{\compD} \|u\|_{\compD}  + 3 \|\kap^{1/2}(\zeta)\, l^{-1/2} \, \g \|_{\Gamma_h}^2.
	\end{align*}
\end{lem}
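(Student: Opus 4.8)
The plan is to run the classical HDG energy argument—testing the linearised system against its own solution—and then to convert the boundary flux pairing on $\Gamma_h$ into the three quantities estimated in Lemma \ref{lem:EstimateVarphi}, closing the bound by a kick-back. First I would choose $(\mbf v,w,\mu)=(\bq,u,\what u)$ in \eqref{eq:Fixed_point}. Adding the first two equations and integrating by parts through $(u,\nabla\cdot\bq)_{\mc T_h}+(\bq,\nabla u)_{\mc T_h}=\langle u,\bq\cdot\mbf n\rangle_{\partial\mc T_h}$, then substituting $\what\bq\cdot\mbf n=\bq\cdot\mbf n+\tau(u-\what u)$, the interior terms telescope and I arrive at
\begin{equation*}
\|\kap^{-1/2}(\zeta)\bq\|_{\compD}^2+\|\tau^{1/2}(u-\what u)\|_{\partial\mc T_h}^2+\langle\what u,\what\bq\cdot\mbf n\rangle_{\partial\mc T_h}=(f(\zeta),u)_{\mc T_h}.
\end{equation*}
Since $\what\bq\cdot\mbf n$ is single-valued and piecewise polynomial on the skeleton, the fourth equation of \eqref{eq:Fixed_point} tested with $\what u$ kills the interior part of the last term, and the third equation (with $\mu=\what\bq\cdot\mbf n|_{\Gamma_h}\in M_h$) replaces $\what u$ by the transferred datum $\varphi(\zeta)$ on $\Gamma_h$. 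Splitting once more the numerical flux leaves the energy identity
\begin{equation*}
\|\kap^{-1/2}(\zeta)\bq\|_{\compD}^2+\|\tau^{1/2}(u-\what u)\|_{\partial\mc T_h}^2+\langle\varphi(\zeta),\bq\cdot\mbf n\rangle_{\Gamma_h}+\langle\varphi(\zeta),\tau(u-\what u)\rangle_{\Gamma_h}=(f(\zeta),u)_{\mc T_h}.
\end{equation*}

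The second step is to make the missing third ingredient of $\triple{(\bq,u-\what u,\varphi)}_\zeta^2$ appear. Using the transfer representation \eqref{def:varphi_{h}-P1} with $\mbf t=\mbf n$ (assumption \eqref{eq:S1}) and the operator $\delta$ from \eqref{def:delta}, I would write the path integral as $l\,\delta_{\bq}(\mbf x)+l\,(\kap^{-1}(\zeta)\bq)(\mbf x)\cdot\mbf n$ and solve for the boundary trace of $\bq\cdot\mbf n$, obtaining
\begin{equation*}
\langle\varphi(\zeta),\bq\cdot\mbf n\rangle_{\Gamma_h}=\|\kap^{1/2}(\zeta)\,l^{-1/2}\,\varphi(\zeta)\|_{\Gamma_h}^2-\langle\varphi(\zeta),\kap(\zeta)\,l^{-1}\,\g\rangle_{\Gamma_h}-\langle\varphi(\zeta),\delta_{\bq}\rangle_{\Gamma_h},
\end{equation*}
where the coefficient $\kap(\zeta)$ evaluated along the path has been reconciled with its boundary value so that the plain $\delta_{\bq}$ of Lemma \ref{lem:EstimateVarphi} is produced. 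Substituting this into the energy identity brings the full triple norm to the left and leaves on the right exactly $(f(\zeta),u)_{\mc T_h}$ together with the three pairings $\langle\varphi(\zeta),\delta_{\bq}\rangle_{\Gamma_h}$, $\langle\varphi(\zeta),\kap(\zeta)l^{-1}\g\rangle_{\Gamma_h}$ and $\langle\varphi(\zeta),\tau(u-\what u)\rangle_{\Gamma_h}$.

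To finish, I would bound $(f(\zeta),u)_{\mc T_h}\le\|f(\zeta)\|_{\compD}\|u\|_{\compD}$ by Cauchy--Schwarz and estimate the three pairings with the corresponding inequalities of Lemma \ref{lem:EstimateVarphi}. These supply three copies of $\tfrac16\|\kap^{1/2}(\zeta)l^{-1/2}\varphi(\zeta)\|_{\Gamma_h}^2$, one copy each of $\tfrac12\|\kap^{-1/2}(\zeta)\bq\|_{\compD}^2$ and $\tfrac12\|\tau^{1/2}(u-\what u)\|_{\partial\mc T_h}^2$, and the term $\tfrac32\|\kap^{1/2}(\zeta)l^{-1/2}\g\|_{\Gamma_h}^2$ (Young's inequality absorbing the sign of the $\tau$-pairing). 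The three sixths sum to one half of the $\varphi$-norm, so the absorbed contributions total precisely $\tfrac12\triple{(\bq,u-\what u,\varphi)}_\zeta^2$; moving them to the left and multiplying by two gives the asserted factors $2$ and $3$.

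The step I expect to be the real obstacle is the flux-pairing rewriting above: the coefficient $\kap^{-1}(\zeta)$ sits inside the transfer integral and is evaluated on the extension patch through the extrapolation $E_h$, so recovering the clean $\delta_{\bq}$ demanded by Lemma \ref{lem:EstimateVarphi}—rather than a $\delta_{\kap^{-1}(\zeta)\bq}$—requires careful control of $\kap(\zeta)$ along the transfer path. This is exactly where the boundedness \eqref{eq:assumptions_kappa} of $\kap$, the geometric assumptions \eqref{eq:Assumptions}, and the simplification $\mbf t=\mbf n$ are used. By contrast, the integration by parts, the telescoping of the interior fluxes, and the concluding kick-back are routine.
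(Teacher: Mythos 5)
Your proposal is correct and is essentially the paper's own proof: the same energy identity (the paper reaches it in one stroke by testing with $\mu=-\what{\bq}\cdot\mbf{n}$ on $\Gamma_h$ and $\mu=-\what{u}$ on $\partial\mc{T}_h\setminus\Gamma_h$, which is exactly your separate use of the third and fourth equations), the same rewriting $\bq\cdot\mbf{n}=\kap(\zeta)l^{-1}(\varphi(\zeta)-\g)-\delta_{\bq}$ from the transfer formula, the same three applications of Lemma \ref{lem:EstimateVarphi} plus Cauchy--Schwarz on $(f(\zeta),u)_{\mc{T}_h}$, and the same kick-back with factors $2$ and $3$. The subtlety you flag about $\kap^{-1}(\zeta)$ sitting inside the path integral is real, but the paper treats it exactly as you propose, so there is no divergence between the two arguments.
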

\begin{proof}
Let $\zeta \in W_h$ and $u = \mathcal J(\zeta) \in W_h$. Since $u$ defined this way is the solution to the discrete system \eqref{eq:Fixed_point}, then testing \eqref{eq:Fixed_point} with
    \begin{equation*}
    \mbf{v}= \bq, \quad w=u,\quad \mu:= \left\{ \begin{array}{rcl}
	 - \what{\bq} \cdot \mbf{n} &  &\text{on } \Gamma_h ,\\
	 - \what{u}&  &\text{on } \partial \mc{T}_h \setminus \Gamma_h
	 \end{array} 
	 \right.
    \end{equation*}
we deduce that
\begin{equation}\label{eq:eq}
\|\kap^{-1/2}(\zeta)\, \bq\|^2_{\compD} + \| \tau^{1/2} \,(u- \what{u}) \|^2_{\partial \mc{T}_h}  =  - \pdual{\varphi(\zeta),\what{\bq}\cdot \mbf{n}}_{\Gamma_h} + (f(\zeta),u)_{\mc{T}_h}.
\end{equation}
On the other hand, we can use the definition of $\varphi$ and $\delta_{\boldsymbol q}$ (cf. \eqref{def:varphi_{h}-P1} and \eqref{def:delta}) to show that
\[
\what{\bq}\cdot \mbf{n} = \kap(\zeta)\,l^{-1}  (\varphi(\zeta)-\g) - \delta_{\bq} + \tau(u - \what{u}).
\]
Substituting the expression for $\what{\bq}\cdot \mbf{n}$ above in \eqref{eq:eq} , we obtain
    \begin{equation*}
    \begin{array}{rl}
    \|\kap^{-1/2}(\zeta)\, \bq\|^2_{\compD}  &+\, \| \tau^{1/2}\, (u- \what{u}) \|^2_{\partial \mc{T}_h}  +  \|\kap^{1/2}(\zeta)\, l^{-1/2}\,  \varphi(\zeta) \|_{\Gamma_h}^2 \\[2ex]
    & = \triple{(\mbf{q},u-\widehat{u},\varphi)}_{\zeta}^2 \\[2ex]
    &\leq |\langle \varphi(\zeta), \kap (\zeta)\,l^{-1}\, \g\, \rangle_{\Gamma_h} |  + | \langle \varphi(\zeta), \delta_{\bq} \rangle_{\Gamma_h} | + |\langle \varphi(\zeta), \tau(u - \what{u})\rangle_{\Gamma_h}| + |(f(\zeta),u)_{\mc{T}_h}|. \\
    \end{array}
    \end{equation*}
Now, using Lemma \ref{lem:EstimateVarphi} to estimate the first three terms in the right hand of this expression we obtain 
    \begin{equation*}
   \dfrac{1}{2} \, \triple{(\bsy{q}, u-\what{u}, \varphi)}_{\zeta}^2  \leq \|f(\zeta)\|_{\compD} \|u\|_{\compD}  + \dfrac{3}{2} \|\kap^{1/2}(\zeta)\, l^{-1/2} \, \g \|_{\Gamma_h}^2, \\
    \end{equation*}
whereupon the proof is concluded. 
\end{proof}
For the following result, we will make use of the properties of the HDG projectors $\boldsymbol\Pi_{\boldsymbol V}$ and $\Pi_{W}$ onto the discrete spaces $\boldsymbol V_h$ and $W_h$. This projection was first introduced in \cite{CoGoSa2010} and we include its definition and main properties in the Appendix \ref{sec:HDGprojection}. The $L^2$ projector onto the space $M_h$ will be denoted by $P_M$, while $Id_M$ will denote the identity on $M_h$.

\begin{lem}\label{lem:StabilityS} Suppose that Assumptions \eqref{eq:Assumptions} and the regularity \eqref{regularity dual problem} are satisfied. Then, there exists $\what{c}>0$, independent of $h$ such that 
	\begin{equation}
	\|u\|_{\compD} \leq 4\, \max\{ \what{c}^2h,1 \}\,  \|f(\zeta)\|_{\compD}+  2\, \left(\sqrt{3}\, \what{c} + \overline{\kap}^{1/2}\, R^{1/2} \right)\,h^{1/2}\, \|\kap^{1/2}(\zeta) \,  l^{-1/2} \,  \g \| _{\Gamma_h}.
	\end{equation}
\end{lem}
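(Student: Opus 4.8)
The plan is to establish the bound through an Aubin--Nitsche duality argument adapted to the unfitted HDG setting, so that the extra powers of $h$ (and the constant $\what{c}$) arise from elliptic regularity together with the approximation properties of the HDG projection. Since $\zeta$ is frozen, $\kappa(\zeta)$ is a fixed bounded coefficient and the relevant dual problem \eqref{eq:DualProblem} is linear: I would let $(\boldsymbol\psi,\phi)$ solve the mixed adjoint problem $\kappa^{-1}(\zeta)\boldsymbol\psi+\nabla\phi=0$, $\nabla\cdot\boldsymbol\psi=u$ in $\Omega_h$ with $\phi=0$ on $\Gamma_h$, and invoke the regularity hypothesis \eqref{regularity dual problem} in the form $\|\phi\|_{2,\Omega_h}+\|\boldsymbol\psi\|_{1,\Omega_h}\lesssim\|u\|_{\Omega_h}$. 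The constant hidden here, combined with the $\mathcal O(h)$ approximation constants of $\boldsymbol\Pi_{\boldsymbol V}$ and $\Pi_W$, is what I would package into $\what{c}$.

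First I would write $\|u\|_{\Omega_h}^2=(u,\nabla\cdot\boldsymbol\psi)_{\mathcal T_h}$, integrate by parts element-wise, and substitute the HDG projections $\boldsymbol\Pi_{\boldsymbol V}\boldsymbol\psi$ and $\Pi_W\phi$ using their defining orthogonality against $\mathbb P_{k-1}$ and the matching of the numerical trace $\boldsymbol\Pi_{\boldsymbol V}\boldsymbol\psi\cdot\boldsymbol n+\tau\Pi_W\phi$. Testing the discrete equations \eqref{eq:Fixed_point} with $(\boldsymbol\Pi_{\boldsymbol V}\boldsymbol\psi,\Pi_W\phi,P_M\phi)$ and the adjoint identities with $(\boldsymbol q,u,\what{u})$, most volume terms cancel by the commuting and orthogonality properties, leaving (i) a source contribution controlled by $\|f(\zeta)\|_{\Omega_h}$ times a projection-error factor of order $h$, and (ii) boundary contributions supported on $\Gamma_h$ coming from the transferred datum $\varphi(\zeta)$ and from the mismatch between $\Gamma_h$ and $\Gamma$.

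The boundary contributions are the crux. Because $\varphi(\zeta)$ is obtained by transferring $g$ along the paths via \eqref{def:varphi_{h}-P1}, adjoint consistency is not exact, and the residual is precisely of the form captured by the auxiliary function $\delta_{\boldsymbol v}$ of \eqref{def:delta}. I would estimate these terms with the inequalities \eqref{ineq:deltav}, using \eqref{eq:S2}--\eqref{eq:S4} to absorb the factors $r_e^{3/2}C_{ext}^e C_{inv}^e$ and to convert extension-patch norms into computational-domain norms; this is where the proximity constant $R$ and the trace-scaling factor $h^{1/2}$, hence the coefficient $\overline{\kappa}^{1/2}R^{1/2}$, enter. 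The remaining $\Gamma_h$-terms are then grouped so as to be bounded by the triple norm $\triple{(\boldsymbol q,u-\what{u},\varphi)}_\zeta$ of the primal solution.

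Finally I would invoke Lemma \ref{lem:EstimateNormH} to bound $\triple{(\boldsymbol q,u-\what{u},\varphi)}_\zeta^2\le 2\|f(\zeta)\|_{\Omega_h}\|u\|_{\Omega_h}+3\|\kappa^{1/2}(\zeta)\,l^{-1/2}\,\g\|_{\Gamma_h}^2$, and combine it with the duality-gained factors of order $h$ and $h^{1/2}$ to reach an inequality of the shape $\|u\|_{\Omega_h}^2\lesssim\|u\|_{\Omega_h}\,[\,\what{c}^2 h\|f(\zeta)\|_{\Omega_h}+(\sqrt 3\,\what{c}+\overline{\kappa}^{1/2}R^{1/2})h^{1/2}\|\kappa^{1/2}(\zeta)\,l^{-1/2}\,\g\|_{\Gamma_h}\,]$; dividing by $\|u\|_{\Omega_h}$ and carefully tracking the numerical constants (the factor $\max\{\what{c}^2 h,1\}$ accounting for the regime where the projection gain fails to beat $1$) yields the stated bound. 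I expect the main obstacle to be the bookkeeping of the $\Gamma_h$ consistency terms: keeping the extrapolation error under control through \eqref{ineq:deltav} and \eqref{eq:Assumptions} while simultaneously expressing everything in terms that Lemma \ref{lem:EstimateNormH} can close is the delicate part, since the nonmatching boundary destroys the clean Galerkin orthogonality that makes the standard HDG duality estimate routine.
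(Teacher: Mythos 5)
Your high-level road map---a duality argument, reduction to the triple norm, then closing with Lemma \ref{lem:EstimateNormH} and a Young-type absorption to produce $\max\{\what{c}^{\,2}h,1\}$---is indeed the paper's strategy. The genuine gap is in your choice of dual problem. You pose it on the computational polygon $\compD$ with homogeneous Dirichlet data on $\Gamma_h$, and then invoke the regularity hypothesis ``in the form'' $\|\phi\|_{2,\compD}+\|\boldsymbol\psi\|_{1,\compD}\lesssim\|u\|_{\compD}$. But the hypothesis \eqref{regularity dual problem} assumed by the lemma concerns the dual problem \eqref{eq:DualProblem} posed on the \emph{fixed curved domain} $\Omega$ with vanishing trace on $\Gamma$; it says nothing about the family of polygons $\{\compD\}_{h>0}$. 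An $H^2\times H^1$ estimate on $\compD$ is not free: $\Omega$ is only piecewise smooth (it may have corners, the x-points of the plasma application), so the inscribed polygons are in general non-convex and $H^2$ regularity can simply fail; and even when it holds, the regularity constant depends on the particular polygon, hence on $h$, which destroys the claimed $h$-independence of $\what{c}$. Since every factor of $h$ in your argument is generated by this estimate, the single inequality on which your proof leans is unjustified under the stated assumptions.

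The paper avoids this by keeping the dual problem on $\Omega$ (coefficient $\kap^{-1}(\zeta)$, datum $\Theta$, $\psi=0$ on $\Gamma$), so that \eqref{regularity dual problem} applies verbatim with an $h$-independent constant. The price is that the dual scalar $\psi$ no longer vanishes on $\Gamma_h$, and the boundary terms must be controlled by Lemma \ref{Lema auxiliar para T_u}, whose estimates, e.g.\ $\|l^{-1}\psi\|_{\Gamma_h}\leq\tilde{c}\,\|\Theta\|_{\Omega}$ and $\|l^{-3/2}(\psi+l\partial_n\psi)\|_{\Gamma_h}\leq\tilde{c}\,\|\Theta\|_{\Omega}$, exploit precisely that $\psi=0$ on $\Gamma$ and that $\Gamma_h$ lies within distance $l\lesssim Rh$ of $\Gamma$; this, together with the splitting $\md{T}_u=\sum_{i=1}^5\md{T}_u^i$ based on the identity $\what{\bq}\cdot\mbf{n}=\kap(\zeta)l^{-1}(\varphi(\zeta)-\g)-\delta_{\bq}+\tau(u-\what{u})$, is where the factors $h^{1/2}$, $R^{1/2}$ and the term $\overline{\kap}^{1/2}(Rh)^{1/2}\|\kap^{1/2}(\zeta)l^{-1/2}\g\|_{\Gamma_h}$ of the final bound come from. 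To salvage your variant you would need to \emph{prove} uniform-in-$h$ $H^2$ regularity for the polygons $\compD$, which is not available here. One further bookkeeping slip: the source contribution does not gain a factor of $h$ (the term $(f(\zeta),\Pi_W\psi)_{\mc{T}_h}$ is only bounded by $\|f(\zeta)\|_{\compD}\|\Theta\|_{\Omega}$), so the ``$1$'' in $\max\{\what{c}^{\,2}h,1\}$ is the actual size of that term rather than a safety net; your intermediate inequality $\|u\|_{\compD}^2\lesssim\|u\|_{\compD}\,[\,\what{c}^{\,2}h\|f(\zeta)\|_{\compD}+\cdots]$ is therefore not what the argument produces.
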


\begin{proof}
Consider $\Theta\in L^2(\Omega)$ and let $\phi$ and $\psi$ be the solutions to the dual problem \eqref{eq:DualProblem} associated to $\Theta$. If we define 
    \begin{equation*}
    \md{T}_{\bq} := ( \kap^{-1}(\zeta) \bq, \bsy{\Pi}_{\mbf{V}} \bsy{\phi} - \bsy{\phi})_{\mc{T}_h}, \quad 
    \md{T}_{u} := \langle \what{u}, P_M(\bsy{\phi} \cdot \mbf{n}) \rangle_{\Gamma_h} - \langle\what{\bq} \cdot \mbf{n} , \Pi_W \psi\rangle_{\Gamma_h} \quad \text{and} \quad \md{T}_{f} := (f(\zeta), \Pi_W \psi)_{\mc{T}_h}, 
    \end{equation*}
it is possible to verify that
	\begin{equation}\label{eq:dual_arg}
	(u, \Theta)_{\mc{T}_h} = \md{T}_{\bq} + \md{T}_{f} + \md{T}_{u}.
	\end{equation}	 
The terms $\md{T}_{\bq}$ and $\md{T}_{f}$ appearing on the expression above can be easily estimated by
    \begin{align}\label{eq:bounds_Tq-Tf}
    |\md{T}_{\bq}| &\lesssim \underline{\kap}^{-1/2}\, h \|\kap^{-1/2}(\zeta)\, \bq\|_{\compD} \, \|\Theta\|_{\Omega}, \quad \text{and} \quad |\md{T}_{f} | \lesssim  \|f(\zeta)\|_{\compD}\, \|\Theta\|_{\Omega}.
    \end{align}
In order to bound the final term of the decomposition of $(u, \Theta)_{\mc{T}_h}$, we rewrite $\md{T}_{u} = \sum_{i=1}^5 \md{T}_{u}^i$ where 
    \begin{equation*}
    \begin{array}{lcl}
     \md{T}_{u}^1 := - \langle\kap(\zeta) l^{-1} \varphi(\zeta) ,\psi + l \partial_n \psi\rangle_{\Gamma_h}, & & \md{T}_{u}^4 := - \langle\tau(u - \what{u}) ,P_M \psi\rangle_{\Gamma_h}, \\[2ex]
    \md{T}_{u}^2 := \langle\kap(\zeta) \varphi(\zeta), (P_M - Id_M)\partial_n \psi \rangle_{\Gamma_h}, &&
	\md{T}_{u}^5 := \langle\kap(\zeta)\,  l^{-1} \, \g, \psi\rangle_{\Gamma_h}, \\[2ex]
	\md{T}_{u}^3 := \langle \delta_{\bq}, \psi \rangle_{\Gamma_h}.&& 
	\end{array}
    \end{equation*}
It is not hard, if cumbersome, to verify that for the terms above the following estimates hold
    \begin{alignat*}{10}
    |\md{T}_{u}^1| &\lesssim   \overline{\kap}^{1/2}\, R\,h\, \|\kap^{1/2}(\zeta) \,l^{-1/2}\, \varphi(\zeta) \|_{\Gamma_h} \|\Theta \|_{\Omega}, & \qquad &&
    |\md{T}_{u}^2| &\lesssim \overline{\kap}^{1/2}\,R^{1/2} \,h\, \|\kap^{1/2}(\zeta) l^{-1/2} \varphi(\zeta) \|_{\Gamma_h} \|\Theta \|_{\Omega},\\[2ex]
    |\md{T}_{u}^3| &\lesssim  \overline{\kap}^{1/2} \, R^2\, h^{1/2} \|\kap^{-1/2}(\zeta)\bq\|_{\compD}\|\Theta\|_{\Omega}, & \qquad &&
    |\md{T}_{u}^4| & \lesssim  \overline{\tau}^{1/2}\,  R\, h\,  \|\tau^{1/2}(u -\what{u})\|_{\partial \mc{T}_h} \|\Theta\|_{\Omega}, \\[2ex]
    |\md{T}_{u}^5| & \lesssim \overline{\kap}^{1/2}\,(Rh)^{1/2}\, \|\kap^{1/2}(\zeta) \,  l^{-1/2} \,  \g \| _{\Gamma_h} \|\Theta\|_{\Omega}. & \qquad && 
    \end{alignat*}
Taking $\Theta=u$ in  \eqref{eq:dual_arg} and combining all of the above estimates with \eqref{eq:bounds_Tq-Tf}, we obtain
\[
\|u\|_{\compD} \leq \what{c}\, h^{1/2} \triple{(\bsy{\sigma}, u-\what{u}, \varphi)}_{\zeta} + \overline{\kap}^{1/2}\,  (Rh)^{1/2} \|\kap^{1/2}(\zeta) \,  l^{-1/2} \,  \g \| _{\Gamma_h} + \|f(\zeta)\|_{\compD},
\]
where $\what{c}:=C\,  \max\{ \underline{\kap}^{-1/2},  \overline{\kap}^{1/2} R,  \overline{\kap}^{1/2} R^2,  \overline{\kap}^{1/2} R^{1/2}, \overline{\tau}^{1/2} R \}$, and $C>0$ is the constant hidden in the symbol $\lesssim$. Then, applying Lemma \ref{lem:EstimateNormH}, we get
    \begin{align*}
	\|u\|_{\compD} &\leq \quad  \what{c} \, h^{1/2}  \left( \sqrt{2}\|f(\zeta)\|_{\compD}^{1/2} \|u\|_{\compD}^{1/2}  + \sqrt{3} \|\kap^{1/2}(\zeta)\, l^{-1/2} \, \g \|_{\Gamma_h} \right) \\
	& \quad + \overline{\kap}^{1/2}\,  (Rh)^{1/2} \|\kap^{1/2}(\zeta) \,  l^{-1/2} \,  \g \| _{\Gamma_h}  + \|f(\zeta)\|_{\compD} \\[2ex]
		\|u\|_{\compD}&\leq 4\, \max\{ \what{c}^2h,1 \}\,  \|f(\zeta)\|_{\compD}+  2\, (\sqrt{3}\, \what{c} + \overline{\kap}^{1/2}\, R^{1/2} )\,h^{1/2}\, \|\kap^{1/2}(\zeta) \,  l^{-1/2} \,  \g \| _{\Gamma_h},
	\end{align*}
with  which the proof is concluded.
\end{proof}
%
\subsection{\textit{A prior} error analysis}\label{sec:Prob1Apriori}

We now provide the {\it a priori} error bounds for the discretization error. The main results of the section are Theorem \ref{eq:HDG_error-P1} and Corollary \ref{corol:estimate_error-P1} immediately after it. As we will see, several of the results leading to the main presented in this section can be proven by using similar arguments to those of Section \ref{sec:well-posedness} and we will omit some of the arguments. The analysis will be performed by decomposing the approximation errors in two components using the properties of the HDG projection (see Appendix \ref{sec:HDGprojection}). The \textit{projection of the errors} is defined as
\[
\bsy{\varepsilon}^{\bq}:= \bsy{\Pi}_{\mbf{V}}\bq - \bq_h \quad \text{ and }  \quad \varepsilon^u:= \Pi_W u - u_h,
\]
and the \textit{error of the projections} are given by
\[
\mbf{I}^{\bq}:= \bq-\bsy{\Pi}_{\mbf{V}}\bq \quad \text{ and } \quad I^u:=u - \Pi_W u.
\]
This allows to express the approximation errors as
\[
\bq-\bq_h = \bsy{\varepsilon}^{\bq}+\mbf{I}^{\bq} \quad \text{ and } \quad u-u_h =  \varepsilon^u + I^u.
\]
In addition, recalling that $P_M$ is the $L^2$ projection into $M_h$, we define the projection error for the hybrid unknown $\widehat{u}_h$ as $\varepsilon^{\what{u}}:= P_M u - \what{u}_h$. The $L^2$-projection of the error for the numerical flux on $\partial \mc{T}_h$ can be expressed as $\errortq \cdot \mbf{n} = \errorq \cdot \mbf{n} +  \tau(\erroru - \errortu)$. It is not difficult show that $( \bsy{\varepsilon}^{\bq},\varepsilon^u,\varepsilon^{\what{u}})$ belongs to $\mbf{V}_h\times W_h \times M_h$ and satisfies 
	\begin{subequations}\label{eq:HDG_error-P1}
	\begin{align}
	(\kap^{-1}(u_h) \errorq, \mbf{v})_{\mc{T}_h} - (\erroru, \nabla \cdot \mbf{v})_{\mc{T}_h} + \langle \varepsilon^{\what{u}}, \mbf{v} \cdot \mbf{n}\rangle_{\partial \mc{T}_h} =&   -(\kap^{-1}(u)\mbf{I}^{\bq},v )_{\mc{T}_h} \nonumber \\
	& - \left((\kap^{-1}(u) - \kap^{-1}(u_h))\Pi_{\mbf{V}}\bq,v\right)_{\mc{T}_h} , \label{error projection 1}  \\
	-(\bsy{\varepsilon}^{\bq}, \nabla w)_{\mc{T}_h} + \langle \bsy{\varepsilon}^{\what{\bq}} \cdot \mbf{n},w\rangle_{\partial \mc{T}_h} =&\, (f(u) - f(u_h), w)_{\mc{T}_h}, \\
	\langle \varepsilon^{\what{u}},\mu \rangle_{\Gamma_h} =&\,  \langle \varphi(u) - \varphi_{h}(u_h) ,\mu \rangle_{\Gamma_h},  \\
	\langle \bsy{\varepsilon}^{\what{\bq}} \cdot \mbf{n}, \mu\rangle_{\partial \mc{T}_h \setminus \Gamma_h} =&\, 0,
	\end{align}
	\end{subequations}
for all $(\mbf{v}, w, \mu)\in \mbf{V}_h \times W_h \times M_h$.

To try and keep the notation compact, we will define the following two quantities involving only the errors in the projections $\boldsymbol I^{\boldsymbol v}$, $\boldsymbol I^{\boldsymbol q}$, and $I^{u}$ measured in the three relevant domains $\Omega_h$, $\Omega_h^c$ and $\Gamma_h$ 
    \begin{subequations}\label{def:lambda}
    \begin{eqnarray}
    \projerrorq &: =&  \left( \| \mbf{I}^{\bq}\|^2_{\compD} +  \| h^{\perp} \partial_n (\mbf{I}^{\bq} \cdot \mbf{n})\|^2_{\Omega_h^c}  + \| (h^{\perp})^{1/2} \mbf{I}^{\bq} \cdot \mbf{n})\|^2_{\Gamma_h} \right)^{1/2}, \label{def:lambda_q} \\
    \projerroru &: =&  \left( \|(h^{\perp})^{1/2} I^u \|_{\Gamma_h}  + \|I^u\|_{\compD} \right)^{1/2}. \label{def:lambda_u}
    \end{eqnarray}
    \end{subequations}
We note that, as pointed out in \cite{SaSaSo2019,CoGoSa2010} by using the properties of the projectors and scaling arguments, if $\bq\in \mbf{H}^{k+1}(\Omega)$, $u\in H^{k+1}(\Omega)$ and $\tau$ is of order one, then $\projerrorq$ and $\projerroru$ are of order $h^{k+1}$. As stated in the theorem below, these quantities are in fact the key to estimating the approximation error of the method
\begin{thm}\label{thm:estimate_error-P1}
If $L$ is small enough, the regularity \eqref{regularity dual problem} holds and the discrete spaces are of polynomial degree $k\geq 1$, then there exists $h_0>0$ such that, for all $h\leq h_0$, we have 
	\begin{eqnarray}\label{eq:estimate_error-P1}
    \triple{ (\errorq ,\erroru - \errortu, \varphi - \varphi_{h})}_{u_h}^2  \lesssim \projerrorq^2 + \projerroru^2. 
	\end{eqnarray}
\end{thm}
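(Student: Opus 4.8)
The plan is to mirror the energy-plus-duality argument developed for the well-posedness in Section~\ref{sec:well-posedness}, now applied to the error system \eqref{eq:HDG_error-P1}. The role played by $(\bq,u-\what u,\varphi)$ in Lemmas~\ref{lem:EstimateNormH} and~\ref{lem:StabilityS} is taken here by the projected errors $(\errorq,\erroru-\errortu,\varphi-\varphi_h)$, and the central difficulty is that the non-linear terms will reintroduce $\|\erroru\|_{\compD}$ on the right-hand side, which must be reabsorbed.

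First I would derive an energy identity for \eqref{eq:HDG_error-P1} by testing with $\mbf v=\errorq$, $w=\erroru$, and the piecewise-defined $\mu$ equal to $-\errortq\cdot\mbf n$ on $\Gamma_h$ and $-\errortu$ on $\partial\mc{T}_h\setminus\Gamma_h$, exactly as in Lemma~\ref{lem:EstimateNormH}. Summing the four equations and using $\errortq\cdot\mbf n=\errorq\cdot\mbf n+\tau(\erroru-\errortu)$ produces $\|\kap^{-1/2}(u_h)\errorq\|_{\compD}^2+\|\tau^{1/2}(\erroru-\errortu)\|_{\partial\mc{T}_h}^2$ on the left, together with a boundary contribution $-\langle\varphi(u)-\varphi_h(u_h),\errortq\cdot\mbf n\rangle_{\Gamma_h}$. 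Rewriting $\errortq\cdot\mbf n$ on $\Gamma_h$ through the definitions of $\varphi-\varphi_h$ and $\delta_{\errorq}$, as in Lemma~\ref{lem:EstimateNormH}, then brings the third term $\|\kap^{1/2}(u_h)\,l^{-1/2}(\varphi-\varphi_h)\|_{\Gamma_h}^2$ to the left, completing the triple norm. The residual boundary terms are controlled by an analogue of Lemma~\ref{lem:EstimateVarphi}, i.e.\ by Young's inequality and the $\delta$-estimates \eqref{ineq:deltav}; here one must first split $\varphi(u)-\varphi_h(u_h)$ into a piece carrying the flux difference $\bq-E_h\bq_h$, which feeds $\delta_{\errorq}$ and the projection error $\projerrorq$, and a piece carrying the mismatch $\kap^{-1}(u)-\kap^{-1}(u_h)$, which is a Lipschitz term handled as below.

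Next I would bound the genuinely new, non-linear contributions on the right: the projection term $-(\kap^{-1}(u)\mbf{I}^{\bq},\errorq)$, the diffusion-mismatch term $-((\kap^{-1}(u)-\kap^{-1}(u_h))\Pi_{\mbf V}\bq,\errorq)$, and the source term $(f(u)-f(u_h),\erroru)$. The first is immediately of the size $\projerrorq$. For the other two I would invoke the Lipschitz bounds \eqref{eq:Lipschitz_k-1} and \eqref{Lipschitz_F} and the decomposition $u-u_h=\erroru+I^u$; this yields factors $L\,(\|\erroru\|_{\compD}+\|I^u\|_{\compD})$ and $L_f\,(\|\erroru\|_{\compD}+\|I^u\|_{\compD})$. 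The $\|I^u\|_{\compD}$ contributions are absorbed into $\projerroru$, while the $\|\erroru\|_{\compD}$ contributions are the obstruction.

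To close the estimate I would establish a stability bound for $\|\erroru\|_{\compD}$ by the same duality device as Lemma~\ref{lem:StabilityS}: testing the dual problem \eqref{eq:DualProblem} against the error system gives, under the regularity \eqref{regularity dual problem}, $\|\erroru\|_{\compD}\lesssim\what c\,h^{1/2}\,\triple{(\errorq,\erroru-\errortu,\varphi-\varphi_h)}_{u_h}+\projerrorq+\projerroru$. Substituting this back, every term carrying $\|\erroru\|_{\compD}$ becomes, up to lower order in $\projerrorq$ and $\projerroru$, a multiple of $(L+L_f)\,h^{1/2}$ times the triple norm, which for $L$ small and $h\le h_0$ small can be moved to the left-hand side; what survives is bounded by $\projerrorq^2+\projerroru^2$, which is the assertion. \textbf{The main obstacle} is precisely this last absorption: the non-linearity couples $\|\erroru\|_{\compD}$ back into the energy balance, so one must both prove the duality estimate for the error variable and verify that the smallness thresholds on $L$ and on $h$ render the constants compatible; keeping track of the interplay between the boundary decomposition of $\varphi-\varphi_h$ and the $\delta$-estimates is the most delicate bookkeeping.
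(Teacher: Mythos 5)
Your proposal is correct and follows essentially the same route as the paper: the paper proves exactly your two ingredients---the energy estimate on the error system (Lemma \ref{lem:norm_error}) and the duality bound $\|\erroru\|_{\compD}\lesssim h^{1/2}\triple{(\errorq,\erroru-\errortu,\varphi-\varphi_{h})}_{u_h}+(h^{1/2}+Lh)\projerrorq+(L+h^{1/2})\projerroru$ (Lemma \ref{lem:estimation E_u})---and then combines them by absorbing the $\|\erroru\|_{\compD}$ contributions for $L$ small and $h\leq h_0$. The only bookkeeping difference is that part of the smallness of $L$ and $L_f$ is already consumed inside the duality lemma itself, to self-absorb the $L\|\erroru\|_{\compD}^2$ and $L_f\|\erroru\|_{\compD}^2$ terms produced when $\Theta=\erroru$, rather than only in the final absorption step as your sketch suggests.
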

The proof of this result will follow straightforwardly from lemmas \ref{lem:norm_error} and \ref{lem:estimation E_u} below. 
Before setting out to prove these two lemmas (and therefore the theorem above) we first state the convergence order of the method---the main result of the section---which thanks to the remark made just above Theorem \ref{thm:estimate_error-P1} follows as a corollary.
\begin{crl}[Order of convergence]\label{corol:estimate_error-P1}
Suppose that the assumptions of Theorem \ref{thm:estimate_error-P1} hold. If, in addition, $u\in H^{k+1}(\Omega)$ and $\bq \in  \mbf{H}^{k+1}(\Omega)$, then
\begin{eqnarray*}
\|\bq-\bq_h\|_{\Omega} + \|u-u_h\|_{\Omega} \leq C h^{k+1} \left( |u|_{k+1,\Omega} + |\bq|_{k+1,\Omega}  \right).
\end{eqnarray*}
\end{crl}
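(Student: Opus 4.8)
The plan is to read the corollary as a straightforward consequence of Theorem \ref{thm:estimate_error-P1} together with the approximation properties of the HDG projection, augmented by one duality step to recover the $L^2$ norm of the scalar variable. First I would invoke the splitting introduced just before the theorem, $\bq-\bq_h=\errorq+\mbf{I}^{\bq}$ and $u-u_h=\erroru+I^u$, so that by the triangle inequality it suffices to bound the four pieces separately on $\compD$. The contribution of the thin strip $\Omega_h^c$, whose measure is $\mathcal O(h)$ by the local proximity condition, is then accounted for through the extension-patch bounds \eqref{eq:Cconstants} and the $\Omega_h^c$-term already built into $\projerrorq$; under Assumptions \eqref{eq:Assumptions} this contributes at the same order $h^{k+1}$ and does not spoil the estimate.

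For the errors of the projections, $\mbf{I}^{\bq}$ and $I^u$, the approximation properties of $\bsy{\Pi}_{\mbf{V}}$ and $\Pi_W$ recalled in Appendix \ref{sec:HDGprojection}, under the regularity $u\in H^{k+1}(\Omega)$ and $\bq\in\mbf{H}^{k+1}(\Omega)$, give directly $\|\mbf{I}^{\bq}\|_{\compD}\lesssim h^{k+1}|\bq|_{k+1,\Omega}$ and $\|I^u\|_{\compD}\lesssim h^{k+1}|u|_{k+1,\Omega}$. These are precisely the estimates recorded in the remark preceding Theorem \ref{thm:estimate_error-P1} showing that $\projerrorq$ and $\projerroru$ are $\mathcal O(h^{k+1})$, so Theorem \ref{thm:estimate_error-P1} yields $\triple{(\errorq,\erroru-\errortu,\varphi-\varphi_h)}_{u_h}\lesssim h^{k+1}(|u|_{k+1,\Omega}+|\bq|_{k+1,\Omega})$.

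For the projections of the errors, the flux part is immediate: since $\underline{\kap}\le\kap(u_h)$, the first summand of the triple norm \eqref{def:triple_norm-P1} controls $\|\errorq\|_{\compD}\le\overline{\kap}^{1/2}\|\kap^{-1/2}(u_h)\errorq\|_{\compD}\le\overline{\kap}^{1/2}\triple{(\errorq,\erroru-\errortu,\varphi-\varphi_h)}_{u_h}\lesssim h^{k+1}$. The scalar part $\erroru$ is the only quantity not measured by the triple norm, which sees only the stabilization seminorm $\|\tau^{1/2}(\erroru-\errortu)\|_{\partial\mc{T}_h}$. To bound $\|\erroru\|_{\compD}$ I would re-run the duality argument of Lemma \ref{lem:StabilityS}: the error system \eqref{eq:HDG_error-P1} has exactly the structure of the fixed-point system \eqref{eq:Fixed_point}, so testing the dual problem \eqref{eq:DualProblem} against the errors and integrating by parts produces $\|\erroru\|_{\compD}\lesssim h^{1/2}\triple{(\errorq,\erroru-\errortu,\varphi-\varphi_h)}_{u_h}$ plus terms controlled by $\projerrorq$ and $\projerroru$, all of which are $\mathcal O(h^{k+1})$. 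Collecting the four bounds and adding them gives the asserted estimate.

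The main obstacle is precisely this last step, the $L^2$ estimate of the scalar error $\erroru$: unlike the flux, it is invisible to the energy/triple norm and must be recovered by duality, and in doing so the nonlinear coupling terms on the right-hand side of \eqref{eq:HDG_error-P1}---those carrying $\kap^{-1}(u)-\kap^{-1}(u_h)$ and $f(u)-f(u_h)$---must be absorbed back into the left-hand side. This is exactly where the hypotheses of Theorem \ref{thm:estimate_error-P1} enter: the smallness of the Lipschitz constant $L$ and the restriction $h\le h_0$ guarantee that these perturbations can be moved to the left with a bounded resulting constant, so the duality estimate closes. A secondary, bookkeeping-level difficulty is tracking the passage from $\compD$ to $\Omega$ across the extension patches, but this only reproduces terms of the same order under Assumptions \eqref{eq:Assumptions}.
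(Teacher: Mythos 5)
Your proposal is correct and follows essentially the same route the paper intends for this corollary: combine Theorem \ref{thm:estimate_error-P1} with the $\mathcal{O}(h^{k+1})$ bounds on $\projerrorq$ and $\projerroru$ from the HDG projection estimates, extract $\|\errorq\|_{\compD}$ from the first component of the triple norm, recover $\|\erroru\|_{\compD}$ by the duality bound (your ``re-run the duality argument'' step is precisely Lemma \ref{lem:estimation E_u}, already proved in the section, so you may simply cite it rather than redo it), and conclude with the triangle inequality applied to the decomposition $\bq-\bq_h=\errorq+\mbf{I}^{\bq}$, $u-u_h=\erroru+I^u$. The only blemish is cosmetic: the inequality $\|\errorq\|_{\compD}\le\overline{\kap}^{1/2}\,\|\kap^{-1/2}(u_h)\,\errorq\|_{\compD}$ follows from the upper bound $\kap(u_h)\le\overline{\kap}$, not from $\underline{\kap}\le\kap(u_h)$ as you wrote.
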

Having stated the main results of the section, we now set out to prove the two lemmas leading to Theorem \ref{thm:estimate_error-P1}. The first part of the analysis will require using an energy argument on the error equations \eqref{eq:HDG_error-P1} and a meticulous study of the error contribution due to the transferred boundary conditions $\varphi_h(u_h)$. This will be done in the following
\begin{lem}\label{lem:norm_error}
There exist positive constants, independent of $h$, such that
	\begin{align}\label{eq:norm_error}
	\triple{ (\errorq ,\erroru - \errortu, \varphi - \varphi_{h}) }_{{u_h}}^2 \leq 12\, \max \{ C_1\, h, C_2\} L^2  \, \left( \|\erroru\|_{\compD}^2 + \| I^{u} \|_{\compD}^2  \right)  + C_3\,  \projerrorq^2 .
	\end{align}
\end{lem}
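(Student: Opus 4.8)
The plan is to run an energy argument on the error system \eqref{eq:HDG_error-P1}, mirroring the computation behind Lemma \ref{lem:EstimateNormH}, and then to convert the resulting boundary term into the third component of the triple norm by means of a transfer-path identity analogous to the one used in the well-posedness proof. First I would test \eqref{eq:HDG_error-P1} with $\mbf{v}=\errorq$, $w=\erroru$, and
\[
\mu=\begin{cases}-\errortq\cdot\mbf{n} & \text{on } \Gamma_h,\\[1ex] -\errortu & \text{on } \partial\mc{T}_h\setminus\Gamma_h.\end{cases}
\]
Adding the first two equations and integrating by parts via $-(\erroru,\nabla\cdot\errorq)_{\mc{T}_h}-(\errorq,\nabla\erroru)_{\mc{T}_h}=-\langle\erroru,\errorq\cdot\mbf{n}\rangle_{\partial\mc{T}_h}$, then using $\errortq\cdot\mbf{n}=\errorq\cdot\mbf{n}+\tau(\erroru-\errortu)$ together with the single-valuedness of $\errortu$ and the conservativity equation (the fourth line of \eqref{eq:HDG_error-P1}) to collapse $\langle\errortu,\errortq\cdot\mbf{n}\rangle_{\partial\mc{T}_h}$ onto $\Gamma_h$, I expect to reach
\[
\|\kap^{-1/2}(u_h)\,\errorq\|_{\compD}^2+\|\tau^{1/2}(\erroru-\errortu)\|_{\partial\mc{T}_h}^2+\langle\varphi(u)-\varphi_{h}(u_h),\errortq\cdot\mbf{n}\rangle_{\Gamma_h}=\mathcal R,
\]
where $\mathcal R=-(\kap^{-1}(u)\mbf{I}^{\bq},\errorq)_{\mc{T}_h}-((\kap^{-1}(u)-\kap^{-1}(u_h))\bsy{\Pi}_{\mbf{V}}\bq,\errorq)_{\mc{T}_h}+(f(u)-f(u_h),\erroru)_{\mc{T}_h}$ collects the right-hand sides of the error equations and the boundary pairing has been rewritten using the third line of \eqref{eq:HDG_error-P1}. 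Since in the present setting the source is independent of the unknown, the $f$-contribution vanishes; otherwise it is controlled by $L_{f}$ and absorbed into the $\|\erroru\|_{\compD}^2$ term.

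Second, to expose the missing norm $\|\kap^{1/2}(u_h)\,l^{-1/2}(\varphi-\varphi_{h})\|_{\Gamma_h}^2$ I would write out $\varphi(u)-\varphi_{h}(u_h)$ from \eqref{def:varphi-P1}--\eqref{def:varphi_{h}-P1} and split the integrand as
\[
\kap^{-1}(u)\,\bq-\kap^{-1}(u_h)\,E_h\bq_h=\kap^{-1}(u_h)\,(\mbf{I}^{\bq}+E_h\errorq)+(\kap^{-1}(u)-\kap^{-1}(u_h))\,\bq.
\]
Invoking the definition of $\delta_{(\cdot)}$ in \eqref{def:delta} (with $\mbf{t}=\mbf{n}$) then yields, on $\Gamma_h$, an identity of the shape
\[
\errortq\cdot\mbf{n}=\kap(u_h)\,l^{-1}(\varphi-\varphi_{h})-\delta_{\errorq}-\delta_{\mbf{I}^{\bq}}-\mbf{I}^{\bq}\cdot\mbf{n}+\tau(\erroru-\errortu)-\kap(u_h)\,l^{-1}\!\!\int_0^{l}(\kap^{-1}(u)-\kap^{-1}(u_h))\,\bq\cdot\mbf{n}\,ds.
\]
Substituting this into the boundary pairing moves $\|\kap^{1/2}(u_h)\,l^{-1/2}(\varphi-\varphi_{h})\|_{\Gamma_h}^2$ to the left, completing $\triple{(\errorq,\erroru-\errortu,\varphi-\varphi_{h})}_{u_h}^2$, and leaves a list of pairings of $\varphi-\varphi_{h}$ against the remaining terms.

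Third, I would estimate $\mathcal R$ and each pairing. Every term $\langle\varphi-\varphi_{h},\cdot\rangle_{\Gamma_h}$ is split by Young's inequality into a small multiple (say $\tfrac16$) of $\|\kap^{1/2}(u_h)\,l^{-1/2}(\varphi-\varphi_{h})\|_{\Gamma_h}^2$, absorbed on the left exactly as in Lemma \ref{lem:EstimateVarphi}, plus a remainder. The $\delta_{\errorq}$ and $\delta_{\mbf{I}^{\bq}}$ remainders are controlled with \eqref{ineq: est delta 1} and \eqref{ineq: est delta 2}; together with the $\langle\varphi-\varphi_{h},\mbf{I}^{\bq}\cdot\mbf{n}\rangle_{\Gamma_h}$ and $-(\kap^{-1}(u)\mbf{I}^{\bq},\errorq)_{\mc{T}_h}$ contributions they reproduce the three pieces of $\projerrorq^2$ in \eqref{def:lambda_q}. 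The $\tau(\erroru-\errortu)$ remainder is absorbed into $\|\tau^{1/2}(\erroru-\errortu)\|_{\partial\mc{T}_h}^2$ via \eqref{eq:S3}. Finally, the two terms carrying $(\kap^{-1}(u)-\kap^{-1}(u_h))$ are bounded through \eqref{eq:Lipschitz_k-1} by $L\,\|u-u_h\|_{\compD}\le L(\|\erroru\|_{\compD}+\|I^u\|_{\compD})$ times $\|\errorq\|_{\compD}$ (resp. $\|\kap^{1/2}(u_h)l^{-1/2}(\varphi-\varphi_{h})\|_{\Gamma_h}$) and a data norm of $\bq$; after Young's inequality these furnish precisely the $L^2(\|\erroru\|_{\compD}^2+\|I^u\|_{\compD}^2)$ contribution, the $h$-dependence of the constants accounting for the $\max\{C_1h,C_2\}$ factor. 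Collecting the absorbed fractions (which sum to a constant strictly below one) on the left yields \eqref{eq:norm_error}.

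The main obstacle I anticipate is the second step: deriving the transfer-path identity for $\errortq\cdot\mbf{n}$ correctly, because the diffusion coefficient now sits inside the path integral and must be split into a factor $\kap^{-1}(u_h)$—which combines cleanly with the $\delta_{(\cdot)}$ operators—and a Lipschitz remainder $(\kap^{-1}(u)-\kap^{-1}(u_h))$, which is the sole origin of the factor $L^2$ in the statement. Keeping track of the extrapolation $E_h$ on $\Omega_h^c$ and ensuring that the residual $\kap$-difference term is genuinely controlled by $\|u-u_h\|_{\compD}$ (rather than by a norm over the exterior patches) is the delicate point; the bookkeeping of the Young constants so that all three squared-norm components can be absorbed is routine but must be carried out carefully.
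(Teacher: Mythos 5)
Your proposal is correct and follows essentially the same route as the paper's proof: the identical choice of test functions in the error equations \eqref{eq:HDG_error-P1}, the same add-and-subtract of $\kappa^{-1}(u_h)$ inside the transfer integral to obtain the identity for $\errortq\cdot\mbf{n}$, and the same Young/delta-estimate bookkeeping that absorbs fixed fractions of the triple norm on the left and leaves the $L^2\left(\|\erroru\|_{\compD}^2+\|I^u\|_{\compD}^2\right)$ and $\projerrorq^2$ contributions. The only departures are cosmetic: the paper further splits $\bq=\mbf{I}^{\bq}+\bsy{\Pi}_{\mbf{V}}\bq$ inside the Lipschitz remainder (so its constants carry $L^\infty$ norms of $\bsy{\Pi}_{\mbf{V}}\bq$ rather than of $\bq$ along the transfer paths), and note that the error equations of this section do retain $f(u)-f(u_h)$, so the source contribution does not vanish but is treated exactly as in your fallback, via the Lipschitz constant $L_f$.
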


\begin{proof}
Starting from \eqref{eq:HDG_error-P1} and letting
\[
\mbf{v}= \errorq, \;\text{ and }\;  w=\erroru\; \text{ in } \mc{T}_h, \; \text{ and }\; \mu:= \left\{ \begin{array}{rcl} - \, \errortq \cdot \mbf{n} &  &\text{on } \Gamma_h \\ 
-  \errortu &  &\text{on } \partial \mc{T}_h \setminus \Gamma_h   \end{array}  \right.
\]
it follows that
    \begin{align}
    \label{eq:norm_error_1}
    &\|\kap^{-1/2}(u_h)\, \errorq\|^2_{\compD} + \|\tau^{1/2}(\erroru - \errortu) \|^2_{\partial \mc{T}_h} = \\[2ex]
    \nonumber
    & -\!(\kap^{-1}\!(u)\mbf{I}^{\bq}, \errorq )_{\mc{T}_h}\! 
     - \! ((\kap^{-1}(u) - \kap^{-1}\!(u_h))\Pi_{\mbf{V}}\bq,\errorq)_{\mc{T}_h} \! +\!  (f(u) - f(u_h), \erroru)_{\mc{T}_h}\! - \!\langle \varphi(u) - \varphi_{h}(u_h) ,  \errortq\cdot \mbf{n} \rangle_{\Gamma_h}.
    \end{align}
We will now manipulate the final term in the expression above to include a term involving the norm of the difference $\varphi(u) - \varphi_{h}(u_h)$, thus allowing us to estimate the transfer error. Using definitions of $\varphi_{h}$ and $\varphi$ (cf. \eqref{def:varphi-P1} and \eqref{def:varphi_{h}-P1} respectively), as well as the definition of $\delta_{\boldsymbol q}$ it follows that
\[
\varphi(u)-\overline{g} = \kappa^{-1}(u)\ell\left(\delta_{\boldsymbol q} +\boldsymbol q\cdot\boldsymbol n\right) \quad \text{ and } \quad \varphi_h(u_h)-\overline{g} = \kappa^{-1}(u_h)\ell\left(\delta_{\boldsymbol q_h} +\boldsymbol q_h\cdot\boldsymbol n\right).
\]
Subtracting the second expression from the first one and adding zero in the form of $\pm\kappa^{-1}(u_h)\left(\delta_{\boldsymbol q}-\boldsymbol q\cdot\boldsymbol n\right)$ it is possible to express the difference as
\begin{align}
\nonumber
\varphi(u) - \varphi_{h}(u_h) =\,& \ell\left(\kappa^{-1}(u)-\kappa^{-1}(u_h)\right) (\delta_{\boldsymbol q} + \boldsymbol q\cdot\boldsymbol n) + \ell\,\kappa^{-1}(u_h)\left(\delta_{\boldsymbol q - \boldsymbol q_h} + (\boldsymbol q - \boldsymbol q_h)\cdot\boldsymbol n\right)\\
\nonumber
=\,& \ell\left(\kappa^{-1}(u)-\kappa^{-1}(u_h)\right)\Big(\delta_{\boldsymbol q} + \boldsymbol q\cdot\boldsymbol n\Big) + \ell\,\kappa^{-1}(u_h)\left(\delta_{\boldsymbol\varepsilon^{\boldsymbol q}} + \delta_{\boldsymbol I^{\boldsymbol q}} + \left(\boldsymbol\varepsilon^{\widehat{\boldsymbol q}} + \boldsymbol I^{\boldsymbol q}\right)\cdot\boldsymbol n -\tau\left(\varepsilon^{u}-\varepsilon^{\widehat{u}}\right)\right)\\
\nonumber
=\,& \ell\left(\kappa^{-1}(u)-\kappa^{-1}(u_h)\right)\Big(\delta_{\boldsymbol I^{\boldsymbol q}} + \delta_{\Pi_v \boldsymbol q} + \big(\boldsymbol I^{\boldsymbol q} + \Pi_{\boldsymbol V} \boldsymbol q\big)\cdot\boldsymbol n\Big) \\
\label{eq:transfererror}
& + \ell\kappa^{-1}(u_h)\left(\delta_{\boldsymbol\varepsilon^{\boldsymbol q}} + \delta_{\boldsymbol I^{\boldsymbol q}} + \left(\boldsymbol\varepsilon^{\widehat{\boldsymbol q}} + \boldsymbol I^{\boldsymbol q}\right)\cdot\boldsymbol n -\tau\left(\varepsilon^{u}-\varepsilon^{\widehat{u}}\right)\right),
\end{align}
where the first equality comes from the substitutions
\[
(\boldsymbol q-\boldsymbol q_h)\cdot\boldsymbol n = \left(\boldsymbol\varepsilon^{\widehat{\boldsymbol q}} + \boldsymbol I^{\boldsymbol q}\right)\cdot\boldsymbol n - \tau\left(\varepsilon^{u}-\varepsilon^{\widehat{u}}\right), \quad \text{and} \quad \delta_{\boldsymbol q + \boldsymbol q_h} = \delta_{\boldsymbol q}+\delta_{\boldsymbol q_h},
\]
while the second one is obtained by replacing $\boldsymbol q = \boldsymbol I^{\boldsymbol q} + \Pi_{\boldsymbol V}  \boldsymbol q$. The expression \eqref{eq:transfererror} allows us to write the term $\boldsymbol\varepsilon^{\widehat{\boldsymbol q}}\cdot\boldsymbol n$ in terms of the transfer error
    \begin{align*}
    \errortq\cdot \mbf{n} &= \kap(u_h)\,  l^{-1}\, (\varphi(u) - \varphi_{h}(u_h)) - \kap(u_h)\, (\kap^{-1}(u) - \kap^{-1}(u_h))(\delta_{\mbf{I}^{\bq}} + \delta_{\Pi_{\mbf{V}} \bq} + \mbf{I}^{\bq} \cdot \mbf{n} + \Pi_{\mbf{V}} \bq \cdot \mbf{n}) \\
    &\quad - \delta_{\errorq} - \delta_{\mbf{I}^q} - \mbf{I}^{\bq} \cdot \mbf{n} + \tau (\erroru - \errortu).
    \end{align*}
Substituting this expression back into \eqref{eq:norm_error_1} and rearranging terms, it follows that 
    \begin{equation}\label{eq:norm_error_2}
    \begin{array}{rl}
    \|\kap^{-1/2}(u_h) \, \errorq\|^2_{\compD} +& \|\tau^{1/2}(\erroru - \errortu) \|^2_{\partial \mc{T}_h} + \| \kap^{1/2}(u_h)  \, l^{-1/2} \, (\varphi(u) - \varphi_{h}(u_h) )\|^2_{\Gamma_h}  \\[2ex]
    &\quad \leq  | (\kap^{-1}(u) \mbf{I}^{\bq}, \errorq )_{\mc{T}_h}| + | ((\kap^{-1}(u) - \kap^{-1}(u_h))\Pi_{\mbf{V}}\bq,\errorq)_{\mc{T}_h}| +  |\md{T}^{f}| + |\md{T}_{\varphi}|,
    \end{array}
    \end{equation}
with $\md{T}^{f}:= (f(u)-f(u_h), \erroru)_{\mc{T}_h}$ and $\md{T}_{\varphi} := \sum_{i=1}^8{|\mr{T}_{\varphi}^i}|$, where
\small{
    \begin{equation*}
    \begin{array}{rclcrcl}
    \md{T}_{\varphi}^1 &:=& \langle \varphi(u) - \varphi_{h}(u_h), \delta_{\errorq} \rangle_{\Gamma_h} && \md{T}_{\varphi}^5 &:=& \langle \varphi(u) - \varphi_{h}(u_h), \kap(u_h)\, (\kap^{-1}(u) - \kap^{-1}(u_h))\, \delta_{\Pi_{\mbf{V}}\bq} \rangle_{\Gamma_h}  \\
    \md{T}_{\varphi}^2 &:=& - \langle \varphi(u) - \varphi_{h}(u_h), \tau (\erroru - \errortu) \rangle_{\Gamma_h} && \md{T}_{\varphi}^6 &:=& \langle \varphi(u) - \varphi_{h}(u_h),  \kap(u_h)\, (\kap^{-1}(u) - \kap^{-1}(u_h))\, \delta_{\mbf{I}^{\bq}}  \rangle_{\Gamma_h}  \\
    \md{T}_{\varphi}^3 &:=& \langle \varphi(u) - \varphi_{h}(u_h), \mbf{I}^{\bq} \cdot \mbf{n} \rangle_{\Gamma_h} && \md{T}_{\varphi}^7 &:=& \langle \varphi(u) - \varphi_{h}(u_h),  \kap(u_h)\, (\kap^{-1}(u) - \kap^{-1}(u_h))\, \mbf{I}^{\bq} \cdot \mbf{n} \rangle_{\Gamma_h}  \\
    \md{T}_{\varphi}^4 &:=& \langle \varphi(u) - \varphi_h(u_h),\delta_{\mbf{I}^{\bq}} \rangle_{\Gamma_h} && \md{T}_{\varphi}^8 &:=& \langle \varphi(u) - \varphi_{h}(u_h),  \kap(u_h)\,(\kap^{-1}(u) - \kap^{-1}(u_h))\, \Pi_{\mbf{V}}\bq \cdot \mbf{n} \rangle_{\Gamma_h}.
    \end{array}
    \end{equation*}
}
To determine upper bounds the terms in the right hand side of \eqref{eq:norm_error_2}, we will make use of Young's inequality, the Lipschitz continuity of $f$ and $\kappa^{-1}$ and the fact that $\|v\|_{L^2(e)} \lesssim h_e^{1/2} \, \|v\|_e, \, \forall \, e\in \mc{E}_h^{\partial}, \forall \, v\in \md{P}_k(e)$. A combination of these with arguments similar as those in \cite[Lemma 5]{SaSaSo2019} results in the following
    \begin{align*}
    |\md{T}_{\varphi}^1| &\leq \dfrac{1}{2\delta_1} \| \kap^{1/2}(u_h)\, l^{-1/2}( \varphi(u) - \varphi_{h}(u_h))\|^2_{\Gamma_h} + \dfrac{\delta_1}{6} \, \| \kap^{-1/2}(u_h) \errorq \|^2_{\compD}, \\ 
    |\md{T}_{\varphi}^2| &\leq \dfrac{1}{2\delta_1} \| \kap^{1/2}(u_h)\, l^{-1/2}( \varphi(u) - \varphi_{h}(u_h))\|^2_{\Gamma_h}  + \dfrac{\delta_1}{6}\,\| \tau^{1/2}(\varepsilon^u - \varepsilon^{\what{u}})\|^2_{\partial \mc{T}_h}, \\
    |\md{T}_{\varphi}^3| &\leq \dfrac{1}{2\delta_2} \| \kap^{1/2}(u_h)\, l^{-1/2}( \varphi(u) - \varphi_{h}(u_h))\|^2_{\Gamma_h} + \dfrac{\delta_2}{2} R \underline{\kap}^{-1} \,  \| (h^{\perp})^{1/2} \mbf{I}^{\bq} \cdot \mbf{n})\|^2_{\Gamma_h} , \\
    |\md{T}_{\varphi}^4| &\leq \dfrac{1}{2\delta_2} \| \kap^{1/2}(u_h)\, l^{-1/2}( \varphi(u) - \varphi_{h}(u_h))\|^2_{\Gamma_h}  + \dfrac{\delta_2}{6} \, \underline{\kap}^{-1} \, \max_{e\in \mc{E}_h^\partial} \{r_e^2\} \,  \| h^{\perp} \partial_n (\mbf{I}^{\bq} \cdot \mbf{n})\|^2_{\compD^c} , \\
    |\md{T}_{\varphi}^5| &\leq \dfrac{1}{2\delta_2} \| \kap^{1/2}(u_h)\, l^{-1/2}( \varphi(u) - \varphi_{h}(u_h))\|^2_{\Gamma_h}  + \dfrac{\delta_2}{6}\,  \overline{\kap} \, \| \Pi_{\mbf{V}}\bq \|^2_{L^{\infty}(\compD)} \,h \, L^2 \, \left( \|\erroru\|_{\compD} + \| I^{u} \|_{\compD}  \right) ^2, \\
    |\md{T}_{\varphi}^6| &\leq \dfrac{1}{2\delta_2} \| \kap^{1/2}(u_h)\, l^{-1/2}( \varphi(u) - \varphi_{h}(u_h))\|^2_{\Gamma_h}  +  \dfrac{2\delta_2}{3}\,  \overline{\kap} \, \underline{\kap}^{-2}  \, \max_{e\in \mc{E}_h^\partial} r_e^2\,  \| h^{\perp} \partial_n (\mbf{I}^{\bq} \cdot \mbf{n})\|^2_{\compD^c} ,\\
    |\md{T}_{\varphi}^7| &\leq \dfrac{1}{2\delta_2} \| \kap^{1/2}(u_h)\, l^{-1/2}( \varphi(u) - \varphi_{h}(u_h))\|^2_{\Gamma_h}  + 2\, \delta_2 \, R\, \overline{\kap} \, \underline{\kap}^{-2} \,  \| (h^{\perp})^{1/2} \mbf{I}^{\bq} \cdot \mbf{n})\|^2_{\Gamma_h}  ,\\
    |\md{T}_{\varphi}^8| &\leq \dfrac{1}{2\delta_2} \| \kap^{1/2}(u_h)\, l^{-1/2}( \varphi(u) - \varphi_{h}(u_h))\|^2_{\Gamma_h}  + \dfrac{\delta_2}{2} \overline{\kap} \, \|(h^{\perp})^{1/2} \Pi_{\mbf{V}} \bq \cdot \mbf{n} \|_{L^{\infty}(\Gamma_h)} L^2\, h \, \left( \|\erroru\|_{\compD} + \| I^{u} \|_{\compD}  \right) ^2, 
    \end{align*}
as well as
    \begin{align*}
	|(\kap^{-1}(u) \mbf{I}^{\bq}, \errorq )_{\mc{T}_h}| &\leq \dfrac{1}{2\, \delta_3} \| \kap^{-1/2}(u_h) \errorq \|^2_{\compD} + \dfrac{\delta_3}{2}\, \underline{\kappa}^{-2}\, \overline{\kap}   \| \mbf{I}^{\bq}\|^2_{\compD}, \\
	|(\kap^{-1}(u) - \kap^{-1}(u_h))\Pi_{\mbf{V}}\bq,\errorq)_{\mc{T}_h}| &\leq \dfrac{1}{2\, \delta_3} \,  \| \kap^{-1/2}(u_h) \errorq \|^2_{\compD} + \dfrac{\delta_3}{2}   \|\Pi_{\mbf{V}}\bq\|_{L^{\infty}(\compD)}^2 \,  \overline{\kap}\, L^2 \left( \|\erroru\|_{\compD} + \| I^{u} \|_{\compD}  \right)^2. \\
	|\md{T}^{f} | &\leq L_{f} \, (\|\erroru\|_{\compD} + \|I^u\|_{\compD})\, \|\erroru\|_{\compD},    
	\end{align*}
where $\delta_1,\delta_2,\delta_3$ are free positive parameters arising from applications of Young's inequality, $\overline{\kappa}$ and $\underline{\kappa}$ are the upper and lower bounds for the diffusivity, and $L$ and $L_f$ are the Lipschitz constants from $\kappa^{-1}$ and $f$ respectively.

If we let $\delta_1 = 4$, $\delta_2 = 12$, and $\delta_3 = 6$ in the above estimates and substitute back into \eqref{eq:norm_error_2} we obtain
	\begin{equation*}
	\triple{ (\errorq ,\erroru - \errortu, \varphi - \varphi_{h}) }_{{u_h}}^2 \leq 12\, \max \{ C_1\, h, C_2\} L^2  \, \left( \|\erroru\|_{\compD}^2 + \| I^{u} \|_{\compD}^2  \right)  + C\, L_{f} (\|\erroru\|_{\compD} + \|I^u\|_{\compD})\, \|\erroru\|_{\compD}  + C_3\,  \projerrorq^2 .
	\end{equation*}
where $C_1, C_2$ and $C_3$ only depend on $\overline{\kap},\underline{\kap},  R$, and the projections $\|(h^{\perp})^{1/2} \Pi_{\mbf{V}} \bq \cdot \mbf{n} \|_{L^{\infty}(\Gamma_h)}^2$ and  $\| \Pi_{\mbf{V}} \bq  \|_{L^{\infty}(\compD)}^2. $
\end{proof}
We now proceed to show that the approximation error in $u$ can be indeed controlled by the errors in the approximation of the flux, the hybrid variable and the transfer error, modulo the approximation properties of the discrete spaces. To show that, in the next lemma we will build upon the ideas as in \cite{SaSaSo2019} and use a duality argument.
\begin{lem}\label{lem:estimation E_u}
Assume that the Lipschitz constant is such that $L_{f}$ is small enough, and consider the discrete spaces to be of polynomial degree $k\geq 1$. Then, 
	\begin{align}\label{eq:estimation E_u}
    \|\erroru\|_{\compD}  \lesssim h^{1/2}\,  \triple{(\errorq,\erroru-\errortu, \varphi-\varphi_{h})}_{u_h} + (h^{1/2} + L\, h) \projerrorq + (L+ h^{1/2}) \projerroru.
	\end{align}
\end{lem}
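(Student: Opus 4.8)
The plan is to mirror the duality argument of Lemma~\ref{lem:StabilityS}, now applied to the error equations \eqref{eq:HDG_error-P1} instead of to \eqref{eq:Fixed_point}. I would introduce the dual problem \eqref{eq:DualProblem} for an arbitrary datum $\Theta\in L^2(\Omega)$, denote by $\bsy{\phi}$ and $\psi$ its flux and potential, and invoke the elliptic regularity \eqref{regularity dual problem} to control $\|\psi\|_{2,\Omega}+\|\bsy{\phi}\|_{1,\Omega}$ by $\|\Theta\|_{\Omega}$. The hypothesis $k\ge 1$ enters precisely here, since it is what makes the projection estimates for $\bsy{\Pi}_{\mbf V}\bsy{\phi}-\bsy{\phi}$ and $\Pi_W\psi-\psi$ gain the extra power of $h$ needed below.

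First, testing \eqref{eq:HDG_error-P1} with the projected dual solution and using the orthogonality properties of the HDG projection (Appendix~\ref{sec:HDGprojection}), I would derive an identity of the form $(\erroru,\Theta)_{\mc{T}_h}=\md{T}_{\bq}+\md{T}_{f}+\md{T}_{u}+\md{T}_{I}$, entirely analogous to \eqref{eq:dual_arg} but with the error variables in place of $(\bq,u,\what u)$, and with an additional block $\md{T}_{I}$ collecting the two projection–error forcing terms on the right-hand side of \eqref{error projection 1}, namely the form built from $\kap^{-1}(u)\mbf I^{\bq}$ and the one built from $(\kap^{-1}(u)-\kap^{-1}(u_h))\Pi_{\mbf V}\bq$. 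The terms $\md{T}_{\bq}$ and $\md{T}_{f}$ are then bounded exactly as in \eqref{eq:bounds_Tq-Tf}, contributing respectively an $h^{1/2}\triple{\cdots}_{u_h}\|\Theta\|_{\Omega}$ term and an $\|f(u)-f(u_h)\|_{\compD}\,\|\Theta\|_{\Omega}$ term.

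Second, for $\md{T}_u$ I would reuse the five-way splitting $\md{T}_u=\sum_{i=1}^5\md{T}_u^i$ of Lemma~\ref{lem:StabilityS}, now with $\varphi(u)-\varphi_h(u_h)$ playing the role of $\varphi(\zeta)$ and with $\errortq$, $\erroru-\errortu$ replacing $\what{\bq}$, $u-\what u$; using the $\delta_{\mbf v}$ estimates \eqref{ineq:deltav} and assumptions \eqref{eq:Assumptions} this yields a bound of the form $h^{1/2}\,\triple{(\errorq,\erroru-\errortu,\varphi-\varphi_h)}_{u_h}\|\Theta\|_{\Omega}$. The block $\md{T}_I$ together with the nonlinear Lipschitz differences produce the remaining two contributions: the $\kap^{-1}(u)\mbf I^{\bq}$ form is controlled by $\projerrorq$ with an $h^{1/2}$ gain, whereas the terms carrying $\kap^{-1}(u)-\kap^{-1}(u_h)$ are linearized through \eqref{eq:Lipschitz_k-1}, $\|\kap^{-1}(u)-\kap^{-1}(u_h)\|_{L^{\infty}}\le L\|\erroru+I^u\|_{\compD}$, so that they scale like $L$ times $\projerrorq$ and $\projerroru$; tracking the powers of $h^{\perp}$ and $r_e$ carefully gives the advertised factors $(h^{1/2}+Lh)\projerrorq$ and $(L+h^{1/2})\projerroru$.

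Finally, I would set $\Theta=\erroru$, use the regularity bound to replace $\|\Theta\|_{\Omega}$ by $\|\erroru\|_{\compD}$, and divide through by $\|\erroru\|_{\compD}$. The only delicate term is the $f$-contribution, which after Lipschitz continuity \eqref{Lipschitz_F} reads $L_f(\|\erroru\|_{\compD}+\|I^u\|_{\compD})$: the self-referential piece $L_f\|\erroru\|_{\compD}$ is absorbed into the left-hand side under the smallness hypothesis on $L_f$, while $L_f\|I^u\|_{\compD}$ is a higher-order remainder folded into the $\projerroru$ term. I expect the main obstacle to be the bookkeeping inside $\md{T}_u$ and $\md{T}_I$: because $\varphi(u)-\varphi_h(u_h)$ couples $\kap^{-1}(u)-\kap^{-1}(u_h)$ with the flux, as made explicit in \eqref{eq:transfererror}, several of the resulting forms are genuinely quadratic in the errors and must be linearized and split so that one factor is absorbed by the triple norm and the other collapses onto $\projerrorq$ or $\projerroru$ with the correct power of $h$.
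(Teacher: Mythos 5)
Your proposal is correct and follows essentially the same route as the paper: a duality argument with \eqref{eq:DualProblem}, the decomposition of $(\erroru,\Theta)_{\mc{T}_h}$ into flux, source, boundary and projection blocks, bounds obtained via \eqref{ineq:deltav}, Lemma \ref{Lema auxiliar para T_u} and the Lipschitz estimates, and the final choice $\Theta=\erroru$ with absorption of the self-referential $L_f\|\erroru\|_{\compD}$ contribution under the smallness hypothesis. The only discrepancy is one of bookkeeping rather than substance: because of the coupling made explicit in \eqref{eq:transfererror}, the paper's boundary block $\md{T}_u$ actually splits into eleven terms (the analogues of the five from Lemma \ref{lem:StabilityS}, plus terms in $\delta_{\mbf{I}^{\bq}}$, $\mbf{I}^{\bq}\cdot\mbf{n}$, $\tau P_M I^u$, and four transfer-error--projection products) rather than five, which is precisely the extra splitting your closing paragraph anticipates.
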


\begin{proof}
The first part of the proof follows very closely the argument used in the proof of Lemma \ref{lem:StabilityS}. Given $\Theta \in L^2(\Omega)$ we will denote by $\boldsymbol\phi$ the solution to the dual problem \eqref{eq:DualProblem} associated to $\Theta$. Considering then the equations \eqref{eq:HDG_error-P1}, together with the dual system, it is possible to show that
	\begin{align}\label{eq:duality-descomp}
	(\varepsilon^u, \Theta)_{\mc{T}_h}& = \md{T}_{\bq}^1 + \md{T}_{\bq}^2 + \md{T}_{u} + \md{T}_{f},
	\end{align}
where
    \begin{alignat*}{10}
     \md{T}_{\bq}^1 &:= (\kap^{-1}(u_h) (\bq-\bq_h), \bsy{\Pi}_{\mbf{V}} \bsy{\phi} )_{\mc{T}_h} +   (\errorq ,\nabla \psi)_{\mc{T}_h}, \qquad&
     \md{T}_{\bq}^2 &:= (\kap^{-1}(u)-\kap^{-1}(u_h)) (\mbf{I}^{\bq} + \Pi_V\bq), \Pi_V \phi)_{\mc{T}_h}, \\
     \md{T}_{u} &:= \langle \errortu, P_M(\bsy{\phi}\cdot \mbf{n}) \rangle_{\Gamma_h} - \langle \errortq \cdot \mbf{n}, \Pi_W\psi\rangle_{\Gamma_h} \qquad&
     \md{T}_{f} &:= (f(u) - f(u_h), \Pi_W \psi)_{\mc{T}_h}.
    \end{alignat*}
To prove the result \eqref{eq:estimation E_u}, we will bound each of the terms $\md{T}_{\star}$, with $\star \in \{\bq, u, f\}$ in the decomposition \eqref{eq:duality-descomp}. 

\paragraph{Bound for $\md{T}_{f}^i$.} The simplest term to bound is $\md{T}_f$, for which an application of Cauchy-Schwartz, the properties of the HDG projector and the Lipschitz continuity of the source term $f$, together with the dual estimate \eqref{regularity dual problem} yield 
    \begin{equation}\label{eq:noundT_f}
    |\md{T}_f | \leq C_{\text{reg}}\, L_f \, (\|\erroru\|_{\compD} + \|I^u\|_{\compD} ) \|\Theta\|_{\Omega} .
    \end{equation}
Where the constant $C_{\text{reg}}$ is the stability constant from the dual problem \eqref{eq:DualProblem}.

\paragraph{Bound for $\md{T}_{\bq}^i$ .} Using the Lipschitz-continuity of $\kappa$ (c.f.  \eqref{eq:Lipschitz_k}) and following the arguments leading to equation (4.8) in \cite[Lemma 5]{SaSaSo2019}, the terms $\md{T}_{\bq}^1 $ and $ \md{T}_{\bq}^2 $ can be bounded like
    \begin{subequations}\label{bounds_Tq1-Tq2}
    \begin{align}
    |\md{T}_{\bq}^1 | &\leq \underline{\kap}^{-1/2} \,\|\kap^{-1/2}(u_h) (\errorq +  \mbf{I}^{\bq})\|_{\compD}\, \|\bsy{\Pi}_{\mbf{V}} \phi - \phi\|_{\compD}  + \| \mbf{I}^{\bq}\|_{\compD}\, \|\nabla(\psi-\psi_h) \|_{\compD} \nonumber \\
    &\leq  C\, C_{\text{reg}}\, \underline{\kap}^{-1/2}  h^{\min\{1,k\}} \|\kap^{-1/2}(u_h)\errorq\|_{\compD} \|\Theta\|_{\Omega} + 2 C\, C_{\text{reg}}\, \max\{\underline{\kap}^{-1/2}\,  \overline{\kap}^{-1/2}, 1 \} h^{\min\{1,k\}}\,  \| \mbf{I}^{\bq}\|_{\compD}\,\|\Theta\|_{\Omega} \\ 
    \intertext{and}
    |\md{T}_{\bq}^2 | &\leq C_{\text{reg}} ( \|\mbf{I}^{\bq}\|_{ \infty} + \| \bsy{\Pi}_{\mbf{V}} \bq\|_{\infty} ) \, L \, ( \|\erroru\|_{\compD} + \|I^u\|_{\compD} ) \|\theta\|_{\Omega} .
    \end{align}
    \end{subequations}
\paragraph{Bound for $\md{T}_{u}$.} To estimate this term we will have to decompose it and treat each of the parts separately. We will write then write $\md{T}_{u} := \sum_{i=1}^{11} \md{T}_{u}^i$, where:
    \begin{equation*}
    \begin{array}{lll}
	\md{T}_{u}^1 := -\langle \kap(u_h)\,  l^{-1}\, (\varphi(u) - \varphi(u_h)), \psi + l \partial_n \psi \rangle_{\Gamma_h}, &  & \md{T}_{u}^7 := -\langle\tau(\erroru - \errortu), P_M\psi \rangle _{\Gamma_h}, \\
	\md{T}_{u}^2 := -\langle \kap(u_h)\, (\varphi(u) - \varphi_{h}(u_h)), (Id_M - P_M)\partial_n \psi \rangle_{\Gamma_h}, &  &  \md{T}_{u}^8 := -\langle \kap(u_h) \, (\varphi(u) - \varphi_{h}(u_h))\, \delta_{\mbf{I}^{\bq}}, \psi  \rangle_{\Gamma_h} \\
	\md{T}_{u}^3 := \langle \delta_{\mbf{I}^q}, \psi\rangle_{\Gamma_h}, &  & \md{T}_{u}^9 := -\langle \kap(u_h) \, (\varphi(u) - \varphi_{h}(u_h))\, \delta_{\bsy{\Pi}_{\mbf{V}} \bq}, \psi  \rangle_{\Gamma_h} , \\
	\md{T}_{u}^4 := \langle \mbf{I}^{\bq} \cdot \mbf{n}, (Id_M-P_M) \psi \rangle_{\Gamma_h} , &  & \md{T}_{u}^{10} := -\langle \kap(u_h) \, (\varphi(u) - \varphi_{h}(u_h))\, \mbf{I}^{\bq}\cdot \mbf{n}, \psi  \rangle_{\Gamma_h}. \\
	\md{T}_{u}^5 := -\langle \tau P_M I^u, \psi\rangle_{\Gamma_h}, &  & \md{T}_{u}^{11} := -\langle \kap(u_h) \, (\varphi(u) - \varphi_{h}(u_h))\, \bsy{\Pi}_{\mbf{V}} \bq \cdot \mbf{n}, \psi  \rangle_{\Gamma_h}, \\
	\md{T}_{u}^6 := \langle \delta_{\errorq}, \psi\rangle_{\Gamma_h}. &  & 
	\end{array}
	\end{equation*}
\paragraph{Bounds for $\md{T}_u^1 - \md{T}_u^7$:} These terms can be estimated by we applying the same techniques of \cite[Lemma 6]{SaSaSo2019}. We will omit most of the the details here. Recalling that the length of the transfer path $l(\boldsymbol{x})\leq c\, R \, h \quad  \forall \boldsymbol{x} \in \Gamma_h$  and considering the constant $\tilde{c}$ from Lemma \ref{Lema auxiliar para T_u} \cite[Lemma 6]{SaSaSo2019}, we have
 \begin{equation*}
    \begin{array}{rl}
	|\md{T}_{u}^1| &\leq c\, \tilde{c}\, \overline{\kap}^{1/2}\,  R\,  h\, \| \kap^{1/2}(u_h)\, l^{-1/2} \, ( \varphi(u) - \varphi_{h}(u_h)) \|_{\Gamma_h} \|\Theta\|_{\Omega}, \\
	|\md{T}_{u}^2| &\leq c\, \tilde{c}\, \overline{\kap}^{1/2} \, R^{1/2} \, h \,  \| \kap^{1/2}(u_h)\, l^{-1/2} \, ( \varphi(u) - \varphi_{h}(u_h)) \|_{\Gamma_h} \|\Theta\|_{\Omega}, \\
	|\md{T}_{u}^3| &\leq \frac{1}{\sqrt{3}}\, c^{1/2}\, \tilde{c} \,   R^{3/2}\, h^{1/2} \|h^{\perp} \partial_n \mbf{I}^{\bq} \cdot \mbf{n} \|_{\Omega_h^c} \|\Theta\|_{\Omega}, \\
	|\md{T}_{u}^4| &\leq \tilde{c}\, h \|(h^{\perp})^{1/2} \mbf{I}^{\bq} \cdot \mbf{n} \|_{\Gamma_h} \|\Theta\|_{\Omega}, \\
	|\md{T}_{u}^5| &\leq \tilde{c}\, \overline{\tau} \,R \, h^{1/2} \|(h^{\perp})^{1/2} I^u \|_{\Gamma_h} \|\Theta\|_{\Omega}, \\
	|\md{T}_{u}^6| &\leq \frac{1}{\sqrt{3}} \, c^{1/2}\, \tilde{c} \,   \overline{\kappa}^{1/2} \, \max_{e \in \mc{E}_h^{\partial}}\{C^e_{\text{ext}}, C^e_{\text{inv}}\} R^2\, h^{1/2} \, \| \kap^{-1/2}(u_h)\,  \errorq  \|_{\Omega_h} \|\Theta\|_{\Omega},\\
	|\md{T}_{u}^7| &\leq c\, \tilde{c}\, \overline{\tau}^{1/2} \, R\, h \|\tau^{1/2}\, (\erroru-\errortu) \|_{\partial \mc{T}_h} \|\Theta\|_{\Omega}.
\end{array}
\end{equation*}
\paragraph{Bounds for $\md{T}_u^8 - \md{T}_u^9$:} Let us first notice that by definition of $\md{T}_u^8$, we can obtain
    \begin{equation*}
    |\md{T}_u^8| = \left|  \left\langle \kap^{1/2}(u_h)\, l \, \kap^{1/2}(u_h) \, l^{-1/2} (\varphi(u) - \varphi_h(u_h))\, l^{1/2}\, \delta_{\mbf{I}^{\bq}}, l^{-1} \, \psi \right\rangle_{\Gamma_h} \right| .   
    \end{equation*}
Then, by Cauchy-Schwartz, the fact that $l(\boldsymbol{x})\leq c\, R \, h \quad  \forall \boldsymbol{x} \in \Gamma_h$ and the boundedness of $\kappa$, we can obtain 
    \begin{equation*}
    |\md{T}_u^8| \leq c\, \overline{\kap}^{1/2}\,  R\, h \, \| \kap^{1/2}(u_h)\, l^{-1/2} \, ( \varphi(u) - \varphi_{h}(u_h)) \, l^{1/2}\, \delta_{\mbf{I}^{\bq}} \|_{\Gamma_h} \|  l^{-1} \, \psi\|_{\Gamma_h}.
    \end{equation*}
Finally, a direct application of \eqref{ineq: est delta 3} to the factor involving the function $\delta_{\boldsymbol I^q}$, and using the estimation \eqref{estim-aux for Tu-p4} for the factor $\|  l^{-1} \, \psi\|_{\Gamma_h}$, results in
    \begin{equation*}
    \begin{array}{rl}
	|\md{T}_{u}^8| &\leq \frac{1}{\sqrt{3}}\, c \, \tilde{c}\, \overline{\kap}^{1/2} \, R^2\, h\,   \displaystyle \sup_{\mbf{x} \in \Gamma_h} \|(h^{\perp}\, \partial_n  \mbf{I}^{\bq} \cdot \mbf{n}\|_{l(\mbf{x})}\,  \| \kap^{1/2}(u_h)\, l^{-1/2} \, ( \varphi(u) - \varphi_{h}(u_h)) \|_{\Gamma_h} \, \|\Theta\|_{\Omega}.
    \end{array}
    \end{equation*}
Analogously, we can show that
    \begin{equation*}
    \begin{array}{rl}
	|\md{T}_{u}^9| &\leq \frac{1}{\sqrt{3}}\, c \, \tilde{c}\, \overline{\kap}^{1/2} \, R^2\, h \, \displaystyle \sup_{\mbf{x} \in \Gamma_h} \|(h^{\perp}\, \partial_n  \mbf{\Pi}_{\mbf{v}} \bq \cdot \mbf{n}\|_{l(\mbf{x})}\,   \| \kap^{1/2}(u_h)\, l^{-1/2} \, ( \varphi(u) - \varphi_{h}(u_h)) \|_{\Gamma_h} \,  \|\Theta\|_{\Omega}.
    \end{array}
    \end{equation*}

\paragraph{Bounds for $\md{T}_u^{10} - \md{T}_u^{11}$:}  We start as in the case for $\md{T}_u^{10} - \md{T}_u^{11}$ by combining, Cauchy-Schwartz , the bounds for $\kappa$ and $l\,\lesssim Rh$, to obtain
    \begin{equation*} 
    \begin{array}{rl}
    |\md{T}_u^{10}| &=  \left|  \left\langle \kap^{1/2}(u_h)\, l \, \kap^{1/2}(u_h) \, l^{-1/2} (\varphi(u) - \varphi_h(u_h))\, l^{1/2}\, \mbf{I}^{\bq} \cdot \mbf{n} , l^{-1} \, \psi \right\rangle_{\Gamma_h} \right|   \\
    &\leq  c\, \overline{\kap}^{1/2} \, R\, h\,  \| \kap^{1/2}(u_h) \, l^{-1/2} (\varphi(u) - \varphi_h(u_h))\, l^{1/2} \mbf{I}^{\bq} \cdot \mbf{n} \|_{\Gamma_h} \| l^{-1} \, \psi\|_{\Gamma_h}. \\
    &\leq  c\, \overline{\kap}^{1/2} \, R\, h\,  \| \kap^{1/2}(u_h) \, l^{-1/2} (\varphi(u) - \varphi_h(u_h))  \|_{\Gamma_h } \| l^{1/2} \mbf{I}^{\bq} \cdot \mbf{n} \|_{L^{\infty} (\Gamma_h}) \| l^{-1} \, \psi\|_{\Gamma_h}.
    \end{array}
    \end{equation*}
From here, we will use the inequality $l(\boldsymbol{x}) \leq r_e h_e^{\perp}$ together with the estimate \eqref{estim-aux for Tu-p4} for $\| l^{-1}\, \psi\|_{\Gamma_h}$, we get
    \begin{equation*}
    \begin{array}{rl}
	|\md{T}_{u}^{10}| &\leq c \, \tilde{c} \,   \overline{\kap}^{1/2} \, R^{3/2}\, h\, \| \kap^{1/2}(u_h)\, l^{-1/2} \, ( \varphi(u) - \varphi_{h}(u_h)) \|_{\Gamma_h} \, \|(h^{\perp})^{1/2}\, \mbf{I}^{\bq}\cdot \mbf{n}\|_{L^{\infty}(\Gamma_h)} \|\Theta\|_{\Omega}.
    \end{array}
    \end{equation*}
Similar arguments can be used to derive the following analogous bound
    \begin{equation*}
    \begin{array}{rl}
    |\md{T}_{u}^{11}| &\leq  c \, \tilde{c} \,   \overline{\kap}^{1/2} \, R^{3/2}\, h\, \| \kap^{1/2}(u_h)\, l^{-1/2} \, ( \varphi(u) - \varphi_{h}(u_h)) \|_{\Gamma_h} \, \|(h^{\perp})^{1/2}\, \bsy{\Pi}_{\mbf{V}} \bq\cdot \mbf{n}\|_{L^{\infty}(\Gamma_h)} \|\Theta\|_{\Omega}.
    \end{array}
    \end{equation*}
Finally, letting $\Theta=\erroru$ in $\compD$ and $\Theta=0$ in $\compD^c$ and using the estimates derived above for all the terms $\mathbb T_{\star}$ in the decomposition \eqref{eq:duality-descomp}, one arrives at the desired estimate:
    \begin{equation*}
    \|\erroru\|_{\compD}  \lesssim h^{1/2}\,  \triple{(\errorq,\erroru-\errortu, \varphi-\varphi_h)}_{u_h} + (h^{1/2} + L\, h) \projerrorq + (L+ h^{1/2}) \projerroru,
    \end{equation*}
\end{proof}
This concludes the analysis of the discretization for problems with nonlinear diffusivities of the form $\kappa =\kappa(u)$. The reminder of the article will be devoted to the analysis of cases where the nonlinearity appears as a dependence to the gradient of the unknown. This functional dependence will require a different reformulation of the problem.
%
\section{Non-linearities of the form $\kappa(\nabla u)$}\label{sec:DependenceOnGradU}
%
\subsection{Problem statement}
%
In some applications, the diffusivity coefficient depends on the gradient of the solution, rather than on the solution itself. This is indeed the case, for instance, in the plasma equilibrium problem, where the coefficient is the inverse of the magnetic permeability. In ferromagnetic materials, the magnetic permeability becomes a function of the total magnetic field and therefore the coefficient has the functional dependence $\kappa = \kappa(\nabla u)$. In cases like this, we will be interested in boundary value problems of the form
	\begin{subequations}\label{eq:main2}
	\begin{align}
	&& -\nabla \cdot \left( \kap(\nabla u) \nabla u \right) &= f(u) & &\text{ in } \Omega, && \\
	&& u &= g  & &\text{ on } \Gamma:=\partial \Omega. &&
	\end{align}
	\end{subequations}
where, just like in the previous section, the source term $f$ will be assumed to be Lipschitz-continuous in $\Omega$, with Lipschitz constant $L_f>0$. We will also maintain the assumption \eqref{eq:assumptions_kappa} on boundedness of the permeability. Note that, since we will be searching for solutions with $H^1(\Omega)$ regularity, the hypothesis \eqref{eq:assumptions_kappa} will guarantee that the permeability remains bounded as a function of $\nabla u$. The Lipschitz-continuity assumptions \eqref{eq:Lipschitz_k}, \eqref{eq:Lipschitz_k1/2}, and \eqref{eq:Lipschitz_k-1} will be replaced by their following vector counterparts 
	\begin{equation}\label{eq:Lipschitz_k-P2}
	\begin{array}{rclcl}
    \|\kap^{-1}(\bsigma_1) - \kap^{-1}(\bsigma_2) \|_{L^2(\Gamma)} &\leq&  L \|\bsigma_1 - \bsigma_2\|_{\boldsymbol L^2(\Omega)} &\qquad& \forall \, \bsigma_1, \bsigma_2 \in \boldsymbol L^2(\Omega), \\[2ex]
	\|\kap^{1/2}(\bsigma_1) - \kap^{1/2}(\bsigma_2) \|_{L^{\infty}(\Gamma)} &\leq& \what{L} \|\bsigma_1 - \bsigma_2\|_{\boldsymbol L^2(\Omega)} &\qquad& \forall \, \bsigma_1, \bsigma_2 \in \boldsymbol L^2(\Omega), \\[2ex]
    \|\kap(\bsigma_1) - \kap(\bsigma_2) \|_{L^2(\Omega)} &\leq& \widetilde L \|\bsigma_1 - \bsigma_2\|_{\boldsymbol L^2(\Omega)} &\qquad& \forall \, \bsigma_1, \bsigma_2 \in \boldsymbol L^2(\Omega).
	\end{array}
	\end{equation}
Following the spirit of reformulating the problem in a mixed form, the functional dependence $\kappa(\nabla u)$ will require us to introduce a new auxiliary variable. Therefore, we introduce $\bsigma:= \nabla u$ and will express the the flux as $\bq:= -\kap(\bsigma) \bsigma$, thus introducing two additional unknowns to the problem. With these definitions, it is possible to write \eqref{eq:main2} as the equivalent system 
	\begin{align*}
	&& && \bsigma - \nabla u &= 0& &\text{ in } \Omega, \\
	&& && \bq + \kap(\bsigma)\, \bsigma &= 0 & &\text{ in } \Omega, \\
	&& && \nabla \cdot \bq  &= f(u) & &\text{ in }  \Omega, \\
	&& && u &= g &  &\text{ on } \partial \Omega.
	\end{align*}
We shall analyze the discretization of this system when restricted to the subdomain in $\compD$. In view of this, our target formulation is
	\begin{subequations}\label{eq:mixed numerical domain}
	\begin{align}
	&& && \bsigma - \nabla u &= 0& &\text{ in } \compD, &&\label{eq:mixed numerical domain_a}\\
	&& && \bq + \kap(\bsigma)\, \bsigma &= 0 & &\text{ in } \compD, &&\label{eq:mixed numerical domain_b}\\
	&& && \nabla \cdot \bq  &= f(u) & &\text{ in }  \compD, &&\label{eq:mixed numerical domain_c}\\
	&& && u &= \varphi &  &\text{ on } \Gamma_h = \partial \compD .&&\label{eq:mixed numerical domain_d}
	\end{align}
	\end{subequations}
where the boundary conditions have been transferred by means of 
		\begin{equation*}
	\varphi(\boldsymbol \sigma,\mbf{x}):= g(\overline{\mbf{x}}) + \int_0^{l(\mbf{x})} \big( \kap^{-1}(\bsigma)  \mbf{q} \big) (\mbf{x} + \boldsymbol{t}(\mbf{x})s) \cdot  \boldsymbol{t}(\mbf{x}) ds.
	\end{equation*}
The HDG scheme associated to \eqref{eq:mixed numerical domain} reads: Find $(\bq_h, \bsigma_h, u_h, \what{u}_h) \in \mbf{V}_h\times \mbf{V}_h \times  W_h \times M_h$, such that
	\begin{subequations}\label{eq:HDG-P12}
	\begin{align}
	(\bsigma_h, \mbf{v})_{\mc{T}_h} + (u_h, \nabla \cdot \mbf{v})_{\mc{T}_h} - \pdual{\what{u}_h, \mbf{v} \cdot \mbf{n}}_{\partial \mc{T}_h} &=  0,  \\
	( \bq_h, \mbf{s} )_{\mc{T}_h} + ( \kap(\bsigma_h) \, \bsigma_h, \mbf{s})_{\mc{T}_h} &= 0 , \\
	-(\bq_h, \nabla w)_{\mc{T}_h} + \pdual{ \what{\bq}_h \cdot \mbf{n},w}_{\partial \mc{T}_h} &=  (f(u_h),w)_{\mc{T}_h}, \label{eq:HDG-P12_b} \\
	\langle \what{u}_h,\mu\rangle_{\Gamma_h} &=  \langle \varphi_{h}(\bsigma_h), \mu\rangle_{\Gamma_h},  \\
	\langle\what{\bq}_h \cdot \mbf{n}, \mu\rangle_{\partial \mc{T}_h \setminus \Gamma_h} &= 0,
	\end{align}
for all $(\mbf{v}, \mbf{s}, w, \mu)\in \mbf{V}_h\times \mbf{V}_h \times W_h \times M_h$. Here, the spaces $\boldsymbol V_h$, $W_h$, and $M_h$ have been defined in \eqref{eq:PolynomialSpaces}, the restriction to the mesh skeleton of the numerical flux has been defined as 
\[
    \what{\bq}_h\cdot \mbf{n} := \bq_h \cdot \mbf{n} +  \tau(u_h - \what{u}_h) \quad \textrm{on}\quad \partial \mc{T}_h,
\]
and the approximate boundary condition given by
		\begin{equation}
	\varphi_{h}(\bsigma_h, \mbf{x}):= g(\overline{\mbf{x}}) + \int_0^{l(\mbf{x})} \big( \kap^{-1}(\bsigma_h)  \mbf{q}_h \big) (\mbf{x} + \boldsymbol{t}(\mbf{x})s) \cdot  \boldsymbol{t}(\mbf{x}) ds.
	\end{equation}
	\end{subequations}
As before, the maximum value of the positive stabilization function $\tau$ will be denoted by $\overline{\tau}$.

In this section we will analyze an HDG scheme for problems of this form. We will first have to reformulate the problem in terms of a mixed system with one additional unknown when compared to the case analyzed in the previous section. Many of the arguments required for the analysis will be similar to those developed in the previous section, and the analysis technique is similar as well. We will therefore omit many of the technical details and indicate the main steps in the analysis, focusing on those that are different from the previous section.
%
\subsection{Well-posedness}\label{sec:well-posedness2}
%
The proof that the system \eqref{eq:HDG-P1} is well-posed will rely on a fixed point argument. As in the previous section, we define the operator $\mc{J}_2 : \mbf{V}_h\times W_h  \to \mbf{V}_h\times W_h $ that maps a pair of functions $(\boldsymbol\eta,\zeta)$ to the first and third component of the solution $(\bq, \bsigma, u, \what{u})\in \mbf{V}_h\times\mbf{V}_h\times W_h \times M_h$ to the HDG system \eqref{eq:HDG-P12} where the source has been evaluated at $(\boldsymbol\eta, \zeta)$. Namely
	\begin{subequations}\label{eq:Fixed_point2}
	\begin{align}
	(\bsigma, \mbf{v})_{\mc{T}_h} + (u, \nabla \cdot \mbf{v})_{\mc{T}_h} - \pdual{\what{u}, \mbf{v} \cdot \mbf{n}}_{\partial \mc{T}_h} &=  0,  \\
	( \bq, \mbf{s} )_{\mc{T}_h} + ( \kap(\boldsymbol\eta) \, \bsigma, \mbf{s})_{\mc{T}_h} &= 0 , \label{eq:Fp-b} \\
	-(\bq, \nabla w)_{\mc{T}_h} + \pdual{ \what{\bq} \cdot \mbf{n},w}_{\partial \mc{T}_h} &=  (f(\zeta),w)_{\mc{T}_h},  \\
	\langle \what{u},\mu\rangle_{\Gamma_h} &=  \langle \varphi(\boldsymbol\eta), \mu\rangle_{\Gamma_h},  \\
	\langle\what{\bq} \cdot \mbf{n}, \mu\rangle_{\partial \mc{T}_h \setminus \Gamma_h} &= 0,
	\end{align}
	\end{subequations}
for all $(\mbf{v},\mbf{s}, w, \mu)\in \mbf{V}_h \times\mbf{V}_h \times W_h \times M_h$. Just as before, $\varphi(\zeta)$ accounts for the transferred boundary conditions and, since the discrete linearized system above is uniquely solvable \cite{CoQiuSo2014}, $\mathcal J_2$ is well defined.

Given a function $\boldsymbol\eta\in \boldsymbol V_h$, we define the following norm over the product space $\mbf{V}_h \times\mbf{V}_h \times W_h \times M_h$
    \begin{align}\label{def:triple_norm-P2}
          \triple{(\mbf{s}, \mbf{v},w,\lambda)}_{\boldsymbol\eta} &:= \left( \| \mbf{s}\|^2_{\compD} +   \| \kap^{1/2}(\boldsymbol\eta) \mbf{v} \|_{\compD}^2 + \|\tau^{1/2} \, w\|_{\partial \mc{T}_h}^2 + \| \kap^{1/2}(\boldsymbol\eta)\,  l^{-1/2} \, \lambda(\boldsymbol\eta) \|_{\Gamma_h}^2 \right)^{1/2}.
    \end{align}	
The general road map for the proof is as follows. Lemmas \ref{lem:EstimateNormH2}  and \ref{lem:StabilityS2} below, will allow us to control $(\bsy{q}, \bsigma, u-\what{u}, \varphi)$ by the linearized source term $f(\zeta)$ and the boundary condition \textit{at the physical boundary}, $\overline{g}$. An application of these two results will then allow us---modulo some technical assumptions involving the bound of the diffusivity and the distance between the physical and computational domains---to use the Lipschitz continuity of $f$ and $\kappa$ to prove that the mapping is indeed a contraction. This will be done in Theorem \ref{thm:Fixed_Point-2}, from which the well posedness of the HDG system \eqref{eq:HDG-P12} will follow as a simple corollary.
\begin{lem}\label{lem:EstimateNormH2} 
If Assumptions \eqref{eq:Assumptions} hold, then
	\begin{equation}
	\label{eq:lemEstimateNormH2}
    \triple{(\bsy{q}, \bsigma, u-\what{u}, \varphi)}_{\boldsymbol\eta}^2 \leq  \max\{1, \overline{\kap}\} \Big(  4\|f(\zeta)\|_{\compD} \|u\|_{\compD}  + 6 \|\kap^{1/2}(\boldsymbol\eta)\, l^{-1/2} \, \g \|_{\Gamma_h}^2 \Big).
    \end{equation}
\end{lem}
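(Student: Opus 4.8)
The plan is to mirror the scalar argument of Lemma \ref{lem:EstimateNormH}, accounting for the auxiliary variable $\bsigma$ through the constitutive equation \eqref{eq:Fp-b} and for the extra term $\|\bq\|_{\compD}^2$ present in the norm \eqref{def:triple_norm-P2}.

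First I would run the energy argument on \eqref{eq:Fixed_point2}, taking $\mbf{v}=\bq$ in the first equation, $\mbf{s}=\bsigma$ in the second, $w=u$ in the third, and
\[
\mu := \begin{cases} -\what{\bq}\cdot\mbf{n} & \text{on } \Gamma_h, \\ -\what{u} & \text{on } \partial\mc{T}_h\setminus\Gamma_h, \end{cases}
\]
in the last two. Integrating $(u,\nabla\cdot\bq)_{\mc{T}_h}$ by parts elementwise, using $\what{\bq}\cdot\mbf{n}=\bq\cdot\mbf{n}+\tau(u-\what{u})$, the conservation condition $\pdual{\what{\bq}\cdot\mbf{n},\what{u}}_{\partial\mc{T}_h\setminus\Gamma_h}=0$ and the transferred condition $\pdual{\what{u},\cdot}_{\Gamma_h}=\pdual{\varphi(\boldsymbol\eta),\cdot}_{\Gamma_h}$, and then replacing the cross term $(\bsigma,\bq)_{\mc{T}_h}$ by $-\|\kap^{1/2}(\boldsymbol\eta)\bsigma\|_{\compD}^2$ via \eqref{eq:Fp-b} tested against $\bsigma$, one obtains the analogue of \eqref{eq:eq}:
\[
\|\kap^{1/2}(\boldsymbol\eta)\bsigma\|_{\compD}^2 + \|\tau^{1/2}(u-\what{u})\|_{\partial\mc{T}_h}^2 = (f(\zeta),u)_{\mc{T}_h} - \pdual{\varphi(\boldsymbol\eta),\what{\bq}\cdot\mbf{n}}_{\Gamma_h}.
\]

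Next, exactly as in Lemma \ref{lem:EstimateNormH}, I would write $\what{\bq}\cdot\mbf{n}=\kap(\boldsymbol\eta)\,l^{-1}(\varphi(\boldsymbol\eta)-\g)-\delta_{\bq}+\tau(u-\what{u})$ on $\Gamma_h$ and substitute, so that the diagonal contribution $\|\kap^{1/2}(\boldsymbol\eta)\,l^{-1/2}\varphi(\boldsymbol\eta)\|_{\Gamma_h}^2$ joins the left-hand side, producing there the quantity
\[
A := \|\kap^{1/2}(\boldsymbol\eta)\bsigma\|_{\compD}^2 + \|\tau^{1/2}(u-\what{u})\|_{\partial\mc{T}_h}^2 + \|\kap^{1/2}(\boldsymbol\eta)\,l^{-1/2}\varphi(\boldsymbol\eta)\|_{\Gamma_h}^2,
\]
while the right-hand side becomes $(f(\zeta),u)_{\mc{T}_h}$ plus the three couplings $\pdual{\varphi(\boldsymbol\eta),\delta_{\bq}}_{\Gamma_h}$, $\pdual{\varphi(\boldsymbol\eta),\tau(u-\what{u})}_{\Gamma_h}$ and $\pdual{\varphi(\boldsymbol\eta),\kap(\boldsymbol\eta)\,l^{-1}\g}_{\Gamma_h}$. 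These are controlled by the vector-coefficient counterpart of Lemma \ref{lem:EstimateVarphi}, whose proof is the scalar one verbatim (Young's inequality, the $\delta$-estimate \eqref{ineq: est delta 1}, and Assumptions \eqref{eq:S2}--\eqref{eq:S3}); the only new point is that the term generated by $\delta_{\bq}$ now appears as $\|\bq\|_{\compD}^2$, which I convert using $\|\bq\|_{\compD}^2\le\overline{\kap}\,\|\kap^{1/2}(\boldsymbol\eta)\bsigma\|_{\compD}^2$, obtained by testing \eqref{eq:Fp-b} with $\mbf{s}=\bq$ and applying Cauchy--Schwarz together with $\kap(\boldsymbol\eta)\le\overline{\kap}$. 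After absorbing the resulting $\tfrac12 A$ into the left-hand side, this yields $A\le 2\|f(\zeta)\|_{\compD}\|u\|_{\compD}+3\|\kap^{1/2}(\boldsymbol\eta)\,l^{-1/2}\g\|_{\Gamma_h}^2$, precisely the bound of Lemma \ref{lem:EstimateNormH}.

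Finally, to recover the full norm \eqref{def:triple_norm-P2} I add back $\|\bq\|_{\compD}^2$. Reusing $\|\bq\|_{\compD}^2\le\overline{\kap}\,\|\kap^{1/2}(\boldsymbol\eta)\bsigma\|_{\compD}^2\le\overline{\kap}\,A$ gives
\[
\triple{(\bq,\bsigma,u-\what{u},\varphi)}_{\boldsymbol\eta}^2=\|\bq\|_{\compD}^2+A\le(\overline{\kap}+1)\,A\le 2\max\{1,\overline{\kap}\}\,A,
\]
and inserting the bound for $A$ proves the claim, since $\overline{\kap}+1\le 2\max\{1,\overline{\kap}\}$. The routine portion is the energy identity, which is formally identical to the scalar case; the step requiring the most care is the \emph{double} use of the constitutive equation \eqref{eq:Fp-b} --- once against $\bsigma$ to close the identity and once against $\bq$ both to tame the $\delta_{\bq}$ coupling and to supply the missing $\|\bq\|_{\compD}^2$ --- together with the verification that the transferred-boundary estimates of Lemma \ref{lem:EstimateVarphi} are insensitive to replacing the scalar argument $\kap(\zeta)$ by the vector argument $\kap(\boldsymbol\eta)$, which they are, as they rely only on the two-sided bounds on $\kap$ and on the geometric $\delta$-inequalities.
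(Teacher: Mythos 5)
Your proof is correct and follows essentially the same route as the paper's: testing \eqref{eq:Fp-b} with $\mbf{s}=\bq$ to obtain $\|\bq\|_{\compD}^2\le\overline{\kap}\,\|\kap^{1/2}(\boldsymbol\eta)\bsigma\|_{\compD}^2$, the energy identity with the same test functions, the representation of $\what{\bq}\cdot\mbf{n}$ on $\Gamma_h$ combined with Lemma \ref{lem:EstimateVarphi} to control the boundary couplings, and the final step $1+\overline{\kap}\le 2\max\{1,\overline{\kap}\}$ giving the constants $4$ and $6$. You are in fact more explicit than the paper on its one delicate point, namely that the $\delta_{\bq}$ contribution from Lemma \ref{lem:EstimateVarphi} must be converted through \eqref{eq:Fp1} before it can be absorbed into $\|\kap^{1/2}(\boldsymbol\eta)\bsigma\|_{\compD}^2$ on the left-hand side.
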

\begin{proof}
Note that if we let $\mbf{s}=\bsy{q}$ in \eqref{eq:Fp-b} , we have
    \begin{equation}\label{eq:Fp1}
    \|\bsy{q}\|_{\compD}^2 \leq \overline{\kap}\,  \|\kap^{1/2}(\boldsymbol\eta)\,\bsigma\|_{\compD}^2. 
    \end{equation}
Then, following the process outlined in the proof of Lemma \ref{lem:EstimateNormH}, we go back to \eqref{eq:Fixed_point2} and choose the test functions as $\mbf{v} = -\mbf{q}, \mbf{s}=\bsigma, w=u$, and \[
\mu=\left\{ 
    \begin{array}{cc}
    - \what{u}, &  \text{ on }\partial \mc{T}_h \setminus \Gamma_h\\
    -\what{\bsy{q}}\cdot \mbf{n}, & \text{ on } \Gamma_h.
    \end{array}
\right.
\]
This leads to the equality
\begin{align*}
     \|\kap^{1/2}(\boldsymbol\eta) \bsigma\|^2_{\compD} + \| \tau^{1/2}(u-\what{u}) \|^2_{\partial \mc{T}_h} =\,& \;\; (f(\zeta),u)_{\mc{T}_h} - \langle \varphi(\boldsymbol\eta), \kap(\boldsymbol\eta)l^{-1} \varphi(\boldsymbol\eta) \rangle_{\Gamma_h} +\langle\varphi(\boldsymbol\eta), \kap(\bsigma)l^{-1} \g  \rangle_{\Gamma_h}   \\
         & + \langle \varphi(\boldsymbol\eta), \g \rangle_{\Gamma_h}  - \langle\varphi(\boldsymbol\eta), \tau(u-\what{u}) \rangle_{\Gamma_h} .
\end{align*}
The terms on the right hand side can be estimated by an application of Lemma \ref{lem:EstimateVarphi}, yielding
\begin{align*}
 \| \kap^{1/2}(\boldsymbol\eta) \bsigma \|_{\compD}^2 + \|\tau^{1/2} \, (u-\what{u})\|_{\partial \mc{T}_h}^2 + \| \kap^{1/2}(\eta)\,  l^{-1/2} \, \varphi(\boldsymbol\eta) \|_{\Gamma_h}^2 &\leq2\|f(\zeta)\|_{\compD} \|u\|_{\compD}  + 3 \|\kap^{1/2}(\boldsymbol\eta)\, l^{-1/2} \, \g \|_{\Gamma_h}^2.
	\end{align*}
Combining this estimate with \eqref{eq:Fp1}, we obtain 
	\begin{align*}
\triple{(\bsy{q}, \bsigma, u-\what{u}, \varphi)}_{\boldsymbol\eta}^2 &\leq  \max\{1, \overline{\kap}\} \left(  4\|f(\zeta)\|_{\compD} \|u\|_{\compD}  + 6 \|\kap^{1/2}(\boldsymbol\eta)\, l^{-1/2} \, \g \|_{\Gamma_h}^2 \right),
	\end{align*}
which concludes the proof.
\end{proof}
It only remains now to estimate the norm of $u$ in terms of the sources and the boundary conditions. This will be done in the next lemma. 
\begin{lem}\label{lem:StabilityS2}
Suppose that Assumptions \eqref{eq:Assumptions} and \eqref{regularity dual problem} are satisfied. Then, there exists $\what{c}>0$, independent of $h$ such that 
	\begin{equation}\label{estimate-u-2problem}
	\|u\|_{\compD} \leq 4\, \max\{ \what{c}^2h,1 \}\,  \|f(\zeta)\|_{\compD} +  2\, (\sqrt{3}\, \what{c} + \overline{\kap}^{1/2}\, R^{1/2} )\,h^{1/2}\, \|\kap^{1/2}(\boldsymbol\eta) \,  l^{-1/2} \,  \g \| _{\Gamma_h}.
	\end{equation}
\end{lem}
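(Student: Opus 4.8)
The plan is to follow, almost verbatim, the duality argument of Lemma \ref{lem:StabilityS}; the only structural novelties are the presence of the auxiliary variable $\bsigma$ and the factor $\max\{1,\overline{\kap}\}$ introduced by Lemma \ref{lem:EstimateNormH2}. I would first fix $\Theta\in L^2(\Omega)$ and let $(\boldsymbol\phi,\psi)$ be the solution of the dual problem \eqref{eq:DualProblem} associated with $\Theta$, which enjoys the regularity \eqref{regularity dual problem}. The key reduction is that equation \eqref{eq:Fp-b} gives $\bsigma=-\kap^{-1}(\boldsymbol\eta)\,\bq$, so the first and third equations of \eqref{eq:Fixed_point2} recombine into exactly the mixed relation treated in Lemma \ref{lem:StabilityS}, with the product $\kap^{-1}(\zeta)\,\bq$ there replaced by $-\bsigma$ here. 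Testing \eqref{eq:Fixed_point2} with $\bsy{\Pi}_{\mbf{V}}\boldsymbol\phi$, $\Pi_W\psi$ and $P_M(\boldsymbol\phi\cdot\mbf{n})$ and integrating by parts as before should therefore yield the same decomposition
\begin{equation*}
(u,\Theta)_{\mc{T}_h}=\md{T}_{\bq}+\md{T}_{f}+\md{T}_{u},
\end{equation*}
where now $\md{T}_{\bq}=-(\bsigma,\bsy{\Pi}_{\mbf{V}}\boldsymbol\phi-\boldsymbol\phi)_{\mc{T}_h}$, while $\md{T}_{f}$ and $\md{T}_{u}$ are formally identical to their counterparts in the previous lemma.

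I would then recycle the term-by-term estimates already established. The bound for $\md{T}_{f}$ is unchanged; for $\md{T}_{\bq}$ I would combine the approximation property of $\bsy{\Pi}_{\mbf{V}}$ with the elementary inequality $\|\bsigma\|_{\compD}\le\underline{\kap}^{-1/2}\|\kap^{1/2}(\boldsymbol\eta)\,\bsigma\|_{\compD}$, which converts the plain $L^2$ norm of $\bsigma$ into the weighted second slot of \eqref{def:triple_norm-P2}. The splitting $\md{T}_{u}=\sum_{i=1}^5\md{T}_{u}^i$ and the five associated bounds carry over without modification, since those terms involve only $\varphi(\boldsymbol\eta)$, $\delta_{\bq}$, $\tau(u-\what{u})$ and $\g$ on $\Gamma_h$, none of which is affected by the reformulation. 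Collecting the estimates and taking $\Theta=u$ then produces an inequality of the form
\begin{equation*}
\|u\|_{\compD}\le \what{c}\,h^{1/2}\,\triple{(\bq,\bsigma,u-\what{u},\varphi)}_{\boldsymbol\eta}+\overline{\kap}^{1/2}(Rh)^{1/2}\,\|\kap^{1/2}(\boldsymbol\eta)\,l^{-1/2}\,\g\|_{\Gamma_h}+\|f(\zeta)\|_{\compD},
\end{equation*}
where $\what{c}$ absorbs the material and geometric constants coming from the five bounds.

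Finally I would insert the control of the triple norm provided by Lemma \ref{lem:EstimateNormH2}. Using $\sqrt{a+b}\le\sqrt{a}+\sqrt{b}$, estimate \eqref{eq:lemEstimateNormH2} gives $\triple{(\bq,\bsigma,u-\what{u},\varphi)}_{\boldsymbol\eta}\le \max\{1,\overline{\kap}\}^{1/2}\big(2\,\|f(\zeta)\|_{\compD}^{1/2}\|u\|_{\compD}^{1/2}+\sqrt{6}\,\|\kap^{1/2}(\boldsymbol\eta)\,l^{-1/2}\,\g\|_{\Gamma_h}\big)$. Substituting this bound, applying Young's inequality to the mixed term $h^{1/2}\|f(\zeta)\|_{\compD}^{1/2}\|u\|_{\compD}^{1/2}$ so as to absorb $\tfrac12\|u\|_{\compD}$ on the left-hand side, and regrouping the boundary contributions, reproduces precisely the claimed bound: the factor $\max\{1,\overline{\kap}\}^{1/2}$ together with the constants $2$ and $\sqrt{6}$ are folded into a redefined constant (concretely, replacing $\what{c}$ by $\sqrt{2}\,\max\{1,\overline{\kap}\}^{1/2}\what{c}$), which turns $\sqrt{6}$ back into $\sqrt{3}$ and recovers the coefficients $4\max\{\what{c}^2h,1\}$ and $2(\sqrt{3}\,\what{c}+\overline{\kap}^{1/2}R^{1/2})h^{1/2}$. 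I expect the only delicate bookkeeping to be checking that the dual decomposition still closes in the presence of the four unknowns $(\bq,\bsigma,u,\what{u})$ and tracking the $\max\{1,\overline{\kap}\}$ factor through the final Young step; since neither alters the structure of the argument, I anticipate no serious obstacle.
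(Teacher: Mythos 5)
Your proposal is correct and follows essentially the same route as the paper's own proof: the identical dual decomposition $(u,\Theta)_{\mc{T}_h} = \md{T}_{\bsigma} + \md{T}_{f} + \md{T}_{u}$ (your $\md{T}_{\bq}$ is exactly the paper's $\md{T}_{\bsigma} = -(\bsigma,\bsy{\Pi}_{\mbf{V}}\bsy{\phi}-\bsy{\phi})_{\mc{T}_h}$), the same five-term splitting of $\md{T}_{u}$ with bounds recycled from Lemma \ref{lem:StabilityS}, and the final combination with Lemma \ref{lem:EstimateNormH2} plus Young's inequality. Your explicit tracking of the $\max\{1,\overline{\kap}\}^{1/2}$ factor through the redefinition $\what{c}\mapsto\sqrt{2}\,\max\{1,\overline{\kap}\}^{1/2}\,\what{c}$ correctly fills in bookkeeping that the paper leaves implicit behind the phrase ``arguments analogous to those in Lemma \ref{lem:StabilityS}.''
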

\begin{proof}
The proof of this result is follows, with small variations, the same process as that of Lemma \ref{lem:StabilityS}. By using $\boldsymbol\eta$  instead of $\zeta$ in the dual problem given in \eqref{eq:DualProblem}, and splitting the duality product as
	\begin{equation*}
	(u, \Theta)_{\mc{T}_h} = \md{T}_{\bsigma} + \md{T}_{u} + \md{T}_{f},
	\end{equation*}	 
where 
    \begin{equation*}
    \md{T}_{\bsigma} := - (\bsigma, \bsy{\Pi}_{\mbf{V}} \bsy{\phi} - \bsy{\phi})_{\mc{T}_h}, \quad 
    \md{T}_{u} := \langle \what{u}, P_M(\bsy{\phi} \cdot \mbf{n}) \rangle_{\Gamma_h} - \langle\what{\bq} \cdot \mbf{n} , \Pi_W \psi\rangle_{\Gamma_h} \quad \text{and} \quad \md{T}_{f} := (f(\zeta), \Pi_W \psi)_{\mc{T}_h}. 
    \end{equation*}
The terms $\md{T}_{\bsigma}$ and $\md{T}_{f}$ are bounded as 
    \begin{align*}
    |\md{T}_{\bsigma}| &\lesssim \underline{\kap}^{-1/2}\, h \|\kap^{-1/2}(\boldsymbol\eta)\, \bsigma\|_{\compD} \, \|\Theta\|_{\Omega}, \quad \text{and} \quad |\md{T}_{f} | \lesssim  \|f(\zeta)\|_{\compD}\, \|\Theta\|_{\Omega},
    \end{align*}
and, we rewrite $\md{T}_{u}$ as $\md{T}_{u} = \sum_{i=1}^5 \md{T}_{u}^i$, with
    \begin{equation*}
    \begin{array}{lcl}
     \md{T}_{u}^1 := - \langle\kap(\boldsymbol\eta) l^{-1} \varphi(\boldsymbol\eta) ,\psi + l \partial_n \psi\rangle_{\Gamma_h}, & & \md{T}_{u}^4 := - \langle\tau(u - \what{u}) ,P_M \psi\rangle_{\Gamma_h}, \\[2ex]
    \md{T}_{u}^2 := \langle\kap(\boldsymbol\eta) \varphi(\boldsymbol\eta), (P_M - I_d)\partial_n \psi \rangle_{\Gamma_h}, &&
	\md{T}_{u}^5 := \langle\kap(\boldsymbol\eta)\,  l^{-1} \, \g, \psi\rangle_{\Gamma_h}, \\[2ex]
	\md{T}_{u}^3 := \langle \delta_{\bsigma}, \psi \rangle_{\Gamma_h} .&& 
	\end{array}
    \end{equation*}
These terms can be bounded using arguments analogous to those in Lemma \ref{lem:StabilityS}, yielding the desired estimate \eqref{estimate-u-2problem}.
\end{proof}

The results in the two preceding lemmas can be combined to estimate $(\bsy{q}, \bsigma, u-\what{u}, \varphi)$ in terms of the source $f(\zeta)$ and the boundary data $\g$. This follows readily from an application of Lemma \ref{lem:StabilityS2} to \eqref{eq:lemEstimateNormH2}, yielding
    \begin{align}
    \nonumber
    \triple{(\bsy{q}, \bsigma, u-\what{u}, \varphi)}^2_{\eta} \leq\,&\;\;\; \left( 16 \, \max\{ 1,\overline{\kap}\}^2 + 8 \,  \max\{ \what{c}^2h,1 \}^2 \right) \|f(\zeta)\|_{\compD}^2 \\[2ex]
    \label{estimate_triple-2}
    & + \left( 6 \max\{1,\overline{\kap}\} + 2\, (\sqrt{3}\, \what{c} + \overline{\kap}^{1/2}\, R^{1/2} )^2\,h \right) \|\kap^{1/2}(\boldsymbol\eta) \,  l^{-1/2} \,  \g \|^2_{\Gamma_h}.
    \end{align}
In turn,  \eqref{estimate-u-2problem} implies that
    \begin{equation}\label{estiamte_norm-u-2}
    \begin{array}{rl}
     \|u\|_{\compD}^2  &\leq 32\,  \max\{ \what{c}^2h,1 \}^2\,  \|f(\zeta)\|_{\compD}^2 + 8\, (\sqrt{3}\, \what{c} + \overline{\kap}^{1/2}\, R^{1/2} )^2 \,h\, \|\kap^{1/2}(\boldsymbol\eta) \,  l^{-1/2} \,  \g \| _{\Gamma_h}^2. 
    \end{array}
    \end{equation}
These two estimates will be used to prove the contractive properties of $\mathcal J_2$ as we will now show.

 \begin{thm}\label{thm:Fixed_Point-2}
Suppose that the dual regularity \eqref{regularity dual problem} and the Assumptions \eqref{eq:Assumptions} hold and suppose also that 
    \begin{equation}\label{assumption1-Fixed-Point-P2}
    \bigg( 16 \, \max\{ 1,\overline{\kap}\}^2 + 40 \,  \max\{ \what{c}^2h,1 \}^2 \bigg) L^2_{f}  < \dfrac{1}{4}  \end{equation}
and
    \begin{equation}\label{assumption2-Fixed-Point-P2}
    \bigg( 6 \max\{1,\overline{\kap}\} + 10\, (\sqrt{3}\, \what{c} + \overline{\kap}^{1/2}\, R^{1/2} )^2\,h \bigg) \what{L}^2  \|l^{-1/2} \,  \g \|^2_{\Gamma_h} < \dfrac{1}{4} .
    \end{equation}
are satisfied. Then $\mc{J}_2$ is a contraction operator.
\end{thm}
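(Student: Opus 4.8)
The plan is to reproduce, on the product space $\mbf{V}_h\times W_h$, the fixed-point argument used for Theorem~\ref{thm:PuntoFijo}, with the combined stability bounds \eqref{estimate_triple-2} and \eqref{estiamte_norm-u-2} playing the role that Lemma~\ref{lem:StabilityS} played in the scalar case. First I would fix two data pairs $(\boldsymbol\eta_1,\zeta_1),(\boldsymbol\eta_2,\zeta_2)\in\mbf{V}_h\times W_h$, let $(\bq_i,\bsigma_i,u_i,\what{u}_i)$ be the corresponding solutions of \eqref{eq:Fixed_point2}, and denote by $(\bsigma_i,u_i)$ the output of $\mc{J}_2$. Subtracting the two copies of \eqref{eq:Fixed_point2} shows that the difference quadruple solves a linear HDG system of the same shape, driven by the effective source $f(\zeta_1)-f(\zeta_2)$ and by the transferred datum $\varphi(\boldsymbol\eta_1)-\varphi(\boldsymbol\eta_2)$, with the diffusion coefficient frozen at $\kap(\boldsymbol\eta_1)$. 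Exactly as in \eqref{eq:KeyStep}, the boundary contribution of this difference system appears weighted as $\|(\kap^{1/2}(\boldsymbol\eta_1)-\kap^{1/2}(\boldsymbol\eta_2))\,l^{-1/2}\,\g\|_{\Gamma_h}$, so the stability estimates \eqref{estimate_triple-2} and \eqref{estiamte_norm-u-2} apply with $\|f(\zeta)\|_{\compD}$ replaced by $\|f(\zeta_1)-f(\zeta_2)\|_{\compD}$ and $\|\kap^{1/2}(\boldsymbol\eta)\,l^{-1/2}\,\g\|_{\Gamma_h}$ replaced by the $\kap^{1/2}$-difference just described.

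Second, I would add the two inequalities obtained in this way. Their sum bounds the full triple norm of the difference together with $\|u_1-u_2\|_{\compD}^2$; in particular it controls $\|\bsigma_1-\bsigma_2\|_{\compD}^2$ (through $\underline{\kap}\,\|\bsigma_1-\bsigma_2\|_{\compD}^2\le\|\kap^{1/2}(\boldsymbol\eta_1)(\bsigma_1-\bsigma_2)\|_{\compD}^2$) and $\|u_1-u_2\|_{\compD}^2$, i.e. the full output of $\mc{J}_2$ measured in the norm \eqref{def:triple_norm-P2}. The coefficients produced on the right are precisely those collected in \eqref{assumption1-Fixed-Point-P2} and \eqref{assumption2-Fixed-Point-P2}: the factor $16\max\{1,\overline{\kap}\}^2+40\max\{\what{c}^2h,1\}^2$ in front of $\|f(\zeta_1)-f(\zeta_2)\|_{\compD}^2$ (the $40$ being the $8$ of \eqref{estimate_triple-2} plus the $32$ of \eqref{estiamte_norm-u-2}) and the factor $6\max\{1,\overline{\kap}\}+10(\sqrt3\,\what{c}+\overline{\kap}^{1/2}R^{1/2})^2h$ in front of the boundary term (with $10=2+8$).

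Third, I would invoke the Lipschitz hypotheses. The source term is controlled by \eqref{Lipschitz_F}, namely $\|f(\zeta_1)-f(\zeta_2)\|_{\compD}\le L_f\|\zeta_1-\zeta_2\|_{\compD}$, while the boundary term is handled by the second line of \eqref{eq:Lipschitz_k-P2}: pulling $\kap^{1/2}(\boldsymbol\eta_1)-\kap^{1/2}(\boldsymbol\eta_2)$ out in $L^\infty(\Gamma)$ and bounding it by $\what{L}\|\boldsymbol\eta_1-\boldsymbol\eta_2\|_{\boldsymbol L^2(\Omega)}$ leaves the fixed factor $\|l^{-1/2}\,\g\|_{\Gamma_h}$. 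Substituting these and using the standing assumptions \eqref{assumption1-Fixed-Point-P2}--\eqref{assumption2-Fixed-Point-P2}, the first group is at most $\tfrac14\|\zeta_1-\zeta_2\|_{\compD}^2$ and the second at most $\tfrac14\|\boldsymbol\eta_1-\boldsymbol\eta_2\|_{\boldsymbol L^2(\Omega)}^2$, so the output difference is bounded by $\tfrac14\big(\|\boldsymbol\eta_1-\boldsymbol\eta_2\|^2+\|\zeta_1-\zeta_2\|^2\big)$, yielding a contraction constant strictly below one. Well-posedness of \eqref{eq:HDG-P12} then follows from the Banach fixed-point theorem.

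The step I expect to be the main obstacle is the very first one: certifying that the difference quadruple really is governed by the clean frozen-coefficient system to which \eqref{estimate_triple-2}--\eqref{estiamte_norm-u-2} apply. Subtracting \eqref{eq:Fp-b} produces $\kap(\boldsymbol\eta_1)(\bsigma_1-\bsigma_2)+(\kap(\boldsymbol\eta_1)-\kap(\boldsymbol\eta_2))\,\bsigma_2$, and the transfer difference carries $\kap^{-1}(\boldsymbol\eta_1)\bq_1-\kap^{-1}(\boldsymbol\eta_2)\bq_2$; each contains a genuine coefficient-mismatch piece absent from a fixed-coefficient solve. As in the scalar estimate \eqref{eq:KeyStep}, the transfer mismatch is exactly what converts the boundary weight into the $\kap^{1/2}$-difference, but the interior mismatch $(\kap(\boldsymbol\eta_1)-\kap(\boldsymbol\eta_2))\bsigma_2$ must be shown---using the boundedness of $\bsigma_2$ supplied by Lemma~\ref{lem:EstimateNormH2} together with the Lipschitz bound \eqref{eq:Lipschitz_k-P2}---to contribute only a term that can be absorbed into the right-hand side without spoiling the constants of \eqref{assumption1-Fixed-Point-P2}--\eqref{assumption2-Fixed-Point-P2}. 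Making this absorption precise, so that the final contraction factor depends only on $L_f$, $\what{L}$, $\overline{\kap}$, $R$ and $h$, is the crux of the argument.
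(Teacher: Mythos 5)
Your proposal follows essentially the same route as the paper's proof: subtract the two copies of \eqref{eq:Fixed_point2}, apply the combined stability bounds \eqref{estimate_triple-2} and \eqref{estiamte_norm-u-2} to the differences $(\bsigma_1-\bsigma_2)$ and $(u_1-u_2)$ (your constants $40=8+32$ and $10=2+8$ are exactly the paper's), and then conclude via the Lipschitz bounds \eqref{Lipschitz_F}, \eqref{eq:Lipschitz_k-P2} and the smallness hypotheses \eqref{assumption1-Fixed-Point-P2}--\eqref{assumption2-Fixed-Point-P2}. The coefficient-mismatch term $(\kap(\boldsymbol\eta_1)-\kap(\boldsymbol\eta_2))\,\bsigma_2$ that you single out as the crux is a genuine subtlety, but the paper's own proof does not address it either---it applies the stability estimates to the differences verbatim---so your attempt is no less complete than the published argument, and arguably more candid about where the hidden work lies.
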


\begin{proof}
Let $(\boldsymbol\eta_i,\zeta_i) \in \mbf{V}_h \times W_h $ and define $(\bsigma_i,u_i):= \mathcal J_2\left((\boldsymbol\eta_i,\zeta_i)\right) \in \mbf{V}_h \times W_h$ for $i \in \{1,2\}$. Then,
    \begin{equation*}
    \|\mc{J}_2(\boldsymbol\eta_1,\zeta_1) - \mc{J}_2(\boldsymbol\eta_2,\zeta_2)\|_{\compD} = \|(\bsigma_1-\bsigma_2, u_1-u_2)\|_{\compD} = \left(  \|\bsigma_1 - \bsigma_2\|^2_{\compD} + \|u_1 - u_2\|^2_{\compD} \right)^{1/2}.
    \end{equation*}
By applying the inequalities \eqref{estimate_triple-2} and \eqref{estiamte_norm-u-2} respectively to  $(\bsigma_1-\bsigma_2)$ and $(u_1-u_2)$, we obtain
    \begin{align*}
    \|\mc{J}_2(\boldsymbol\eta_1,\zeta_1) -\,& \mc{J}_2(\boldsymbol\eta_2,\zeta_2)\|_{\compD} \leq \bigg( \left( 16 \, \max\{ 1,\overline{\kap}\}^2 + 40 \,  \max\{ \what{c}^2h,1 \}^2 \right) \|f(\zeta_1) - f(\zeta_2)\|_{\compD}^2 \\
    &+ \left( 6 \max\{1,\overline{\kap}\} + 10\, (\sqrt{3}\, \what{c} + \overline{\kap}^{1/2}\, R^{1/2} )^2\,h \right) \|\kap^{1/2}(\boldsymbol\eta_1) - \kap^{1/2}(\boldsymbol\eta_2) \|_{L^{\infty}(\Gamma_h)}^2 \|l^{-1/2} \,  \g \|^2_{\Gamma_h} \bigg)^{1/2}.
    \end{align*}
Then, using the Lipschitz continuity of $f$ and $\kappa^{1/2}$, we get
    \begin{align*}
    \nonumber
    \|\mc{J}_2(\boldsymbol\eta_1,\zeta_1) -\,& \mc{J}_2(\boldsymbol\eta_2,\zeta_2)\|_{\compD} \leq \bigg( \left( 16 \, \max\{ 1,\overline{\kap}\}^2 + 40 \,  \max\{ \what{c}^2h,1 \}^2 \right) L^2_{f} \|\zeta_1 - \zeta_2\|_{\compD}^2 \\
    \label{estimate1_FP}
    & + \left( 6 \max\{1,\overline{\kap}\} + 10\, (\sqrt{3}\, \what{c} + \overline{\kap}^{1/2}\, R^{1/2} )^2\,h \right) \what{L}^2 \|\boldsymbol\eta_1 - \boldsymbol\eta_2\|_{L^{\infty}(\compD)}^2 \|l^{-1/2} \,  \g \|^2_{\Gamma_h} \bigg)^{1/2}.
    \end{align*}
The proof is concluded, by applying the assumptions \eqref{assumption1-Fixed-Point-P2} and \eqref{assumption2-Fixed-Point-P2} to the right hand side of the preceding inequality.
\end{proof}
As a result we can then conclude this section with with the following
\begin{crl}\label{crl:well-posed}
If the hypotheses of Theorem \ref{thm:Fixed_Point-2} are satisfied, the HDG system \eqref{eq:Fixed_point2} is well posed.
\end{crl}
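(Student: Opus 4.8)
The plan is to deduce the corollary directly from Theorem \ref{thm:Fixed_Point-2} by means of the Banach fixed-point theorem. Since the contraction property of $\mc{J}_2$ has already been established there, essentially all of the analytic work is complete, and what remains is to package it correctly and to translate a statement about fixed points into a statement about solutions of the discrete system.

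First I would note that the product space $\mbf{V}_h\times W_h$ is finite-dimensional and is therefore a complete metric space with respect to the norm $\|(\bsigma,u)\|_{\compD}=(\|\bsigma\|_{\compD}^2+\|u\|_{\compD}^2)^{1/2}$ employed in the statement and proof of Theorem \ref{thm:Fixed_Point-2}. Because $\mc{J}_2$ maps this complete space into itself and, under the hypotheses \eqref{assumption1-Fixed-Point-P2}--\eqref{assumption2-Fixed-Point-P2}, is a contraction, the Banach fixed-point theorem yields a \emph{unique} pair $(\bsigma_h,u_h)\in\mbf{V}_h\times W_h$ satisfying $\mc{J}_2(\bsigma_h,u_h)=(\bsigma_h,u_h)$.

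The second step is to identify this fixed point with the solution of the nonlinear scheme \eqref{eq:HDG-P12}. By construction of $\mc{J}_2$, evaluating the linearized system \eqref{eq:Fixed_point2} at $(\boldsymbol\eta,\zeta)=(\bsigma_h,u_h)$ replaces $\kap(\boldsymbol\eta)$ by $\kap(\bsigma_h)$, the source $f(\zeta)$ by $f(u_h)$, and $\varphi(\boldsymbol\eta)$ by $\varphi_{h}(\bsigma_h)$, so that \eqref{eq:Fixed_point2} becomes verbatim the nonlinear HDG system \eqref{eq:HDG-P12}. Thus the full quadruple $(\bq_h,\bsigma_h,u_h,\what{u}_h)$ obtained by solving \eqref{eq:Fixed_point2} at this fixed point solves \eqref{eq:HDG-P12}. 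Conversely, the $(\bsigma,u)$-components of any solution of \eqref{eq:HDG-P12} form a fixed point of $\mc{J}_2$, so uniqueness of the fixed point carries over to uniqueness of the discrete solution.

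The single point requiring care, rather than a genuine obstacle, is the recovery of the two remaining unknowns $\bq_h$ and $\what{u}_h$. Once $(\bsigma_h,u_h)$ is frozen, the system \eqref{eq:Fixed_point2} is linear in $(\bq,\bsigma,u,\what{u})$, and its unique solvability---established in \cite{CoQiuSo2014} and already invoked to guarantee that $\mc{J}_2$ is well defined---determines $\bq_h$ and $\what{u}_h$ uniquely from $(\bsigma_h,u_h)$. Combining existence and uniqueness of the fixed point with this unique reconstruction of the remaining components establishes the well-posedness of the HDG system, which is the assertion of the corollary.
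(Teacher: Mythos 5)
Your proof is correct and follows exactly the route the paper intends: the corollary is stated as an immediate consequence of Theorem \ref{thm:Fixed_Point-2}, namely the contraction property combined with the Banach fixed-point theorem on the finite-dimensional (hence complete) space $\mbf{V}_h\times W_h$, with the fixed point identified with the solution of the nonlinear scheme. Your explicit handling of the two remaining unknowns $\bq_h$ and $\what{u}_h$ via the unique solvability of the linearized system, and of the two-way correspondence between fixed points of $\mc{J}_2$ and solutions of the nonlinear system, is precisely the ``standard fixed-point argument'' the paper invokes without writing out; you also correctly read the corollary's reference to \eqref{eq:Fixed_point2} as meaning well-posedness of the nonlinear HDG system \eqref{eq:HDG-P12}, since the linearized system needs no contraction hypothesis at all.
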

Having established the well posedness of the discrete system \eqref{eq:Fixed_point2}, we will concentrate our efforts in the next section on establishing the convergence properties of the HDG scheme.
%
\subsection{\textit{A priori} error analysis}\label{sec:Prob2Apriori}
%
The study of the convergence properties and rates of our discretization follows similar steps as the ones laid out in Section \ref{sec:Prob1Apriori}, adapted for the extended system that arises from the introduction of the additional auxiliary variable $\boldsymbol \sigma = \nabla u$. To avoid unnecessary repetition of arguments, we will focus on the differences between these two cases and will omit most of the details that can be easily inferred from Section \ref{sec:Prob1Apriori}.

As before, we decompose the error with the aid of the HDG projection as :
\[
\bq-\bq_h = \bsy{\varepsilon}^{\bq}+\mbf{I}^{\bq},\quad  \bsigma-\bsigma_h =  \bsy{\varepsilon}^{\bsigma}+\mbf{I}^{\bsigma}, \;\;\text{ and }\;\; u-u_h =  \varepsilon^u + I^u,
\]
where, similar to Section \ref{sec:Prob1Apriori}, we have defined
\begin{alignat*}{10}
&\bsy{\varepsilon}^{\bq}:=&\, \bsy{\Pi}_{\mbf{V}}\bq - \bq_h, \qquad
\bsy{\varepsilon}^{\bsigma}:=&&\, \bsy{\Pi}_{\mbf{V}}\bsigma - \bsigma_h, \qquad
\varepsilon^u:=&&\, \Pi_W u - u_h \qquad&\text{ \textit{(Projection of the error)}},\\
&\mbf{I}^{\bq}:=&\, \bq-\bsy{\Pi}_{\mbf{V}}\bq, \qquad
\mbf{I}^{\bsigma}:=&&\, \bsigma-\bsy{\Pi}_{\mbf{V}}\bsigma, \qquad
I^u:=&&\, u - \Pi_W u \qquad&\text{ \textit{(Error of the projection)}}.
\end{alignat*}
In addition, using the $L^2$ projection into $M_h$ we have $\varepsilon^{\what{u}}:= P_M u - \what{u}_h$. The vector of error projections $( \bsy{\varepsilon}^{\bq}, \bsy{\varepsilon}^{\bsigma},\varepsilon^u,\varepsilon^{\what{u}})$ belongs to $\mbf{V}_h\times\mbf{V}_h\times  W_h \times M_h$ and satisfies the error equations
	\begin{subequations}\label{error projection-P2}
	\begin{align}
	(\errorsigma, \mbf{v})_{\mc{T}_h} + (\erroru, \nabla \cdot \mbf{v})_{\mc{T}_h} - \langle \errortu, \mbf{v} \cdot \mbf{n} \rangle_{\partial \mc{T}_h} =& - (\mbf{I}^{\bsigma}, \mbf{v} )_{\mc{T}_h} - (\mbf{I}^u,\nabla \cdot \mbf{v})_{\mc{T}_h}  \\
	(\errorq ,\mbf{s})_{\mc{T}_h}  + \left(\kap(\bsigma_h)\, \errorsigma ,\mbf{s} \right)_{\mc{T}_h} =& - (\mbf{I}^{\bq},\mbf{s})_{\mc{T}_h} -\left(\kap(\bsigma)\, \mbf{I}^{\bsigma},\mbf{s}\right)_{\mc{T}_h} \\
	& - \left(\left(\kap(\bsigma) - \kap(\bsigma_h)\right)\Pi_{\mbf{V}}\bsigma,s\right)_{\mc{T}_h} , \\
	-(\errorq, \nabla w)_{\mc{T}_h} + \langle \errortq \cdot \mbf{n},w\rangle_{\partial \mc{T}_h} =&\, (f(u) - f(u_h), w)_{\mc{T}_h}, \\
	\langle\errortu,\mu \rangle_{\Gamma_h} =&\,  \langle \varphi(\bsigma) - \varphi_{h}(\bsigma_h) ,\mu \rangle_{\Gamma_h},  \\
	\langle \errortq \cdot \mbf{n}, \mu\rangle_{\partial \mc{T}_h \setminus \Gamma_h} =&\, 0,
	\end{align}
	\end{subequations}
for all $(\mbf{v},\mbf{s}, w, \mu)\in \mbf{V}_h \times\mbf{V}_h \times W_h \times M_h$. Here, as before,  on $\partial \mc{T}_h$ we have $\errortq \cdot \mbf{n} = \errorq \cdot \mbf{n} +  \tau(\erroru - \errortu)$.

Following the same arguments of Lemma \ref{lem:norm_error} is possible to estimate the magnitude of $(\errorsigma, \errorq, \erroru-\errortu, \varphi-\varphi_{h})$, as measured by the norm $\triple{\cdot}_{\bsigma}$ defined in \eqref{def:triple_norm-P2}. Choosing the vector of approximation errors both as test and trial in the error equations we obtain
    \begin{equation}\label{eq:norm-error_aux_2}
    \begin{array}{l}
    \|\kap^{1/2}(\bsigma_h) \, \errorsigma\|^2_{\compD} + \|\tau^{1/2}(\erroru - \errortu) \|^2_{\partial \mc{T}_h} + \| \kap^{1/2}(\bsigma_h)  \, l^{-1/2} \, (\varphi(\bsigma) - \varphi_{h}(\bsigma_h) )\|^2_{\Gamma_h}  \\[2ex]
    \quad \leq |(\mbf{I}^{\bq}, \errorsigma)_{\mc{T}_h}| + |(\mbf{I}^{\bsigma}, \errorq)_{\mc{T}_h}| + | (\kap(\bsigma) \mbf{I}^{\bsigma}, \errorsigma )_{\mc{T}_h}| + | ((\kap(\bsigma) - \kap(\bsigma_h))\Pi_{\mbf{V}}\bsigma,\errorsigma)_{\mc{T}_h}| +  |\md{T}^{f}| + |\md{T}_{\varphi}|.
    \end{array}
    \end{equation}
The final two terms are defined as $\md{T}^{f}:= (f(u)-f(u_h), \erroru)_{\mc{T}_h}$ and $\md{T}_{\varphi} := \sum_{i=1}^8{|\mr{T}_{\varphi}^i}|$, where

    \begin{equation*}
    \begin{array}{rcl}
    \md{T}_{\varphi}^1 &:=& \langle  \varphi(\bsigma) - \varphi_{h}(\bsigma_h), \delta_{\errorq} \rangle_{\Gamma_h} \\
    \md{T}_{\varphi}^2 &:=& - \langle \varphi(\bsigma) - \varphi_{h}(\bsigma_h), \tau (\erroru - \errortu) \rangle_{\Gamma_h} \\
    \md{T}_{\varphi}^3 &:=& \langle\varphi(\bsigma) - \varphi_{h}(\bsigma_h), \mbf{I}^{\bq} \cdot \mbf{n} \rangle_{\Gamma_h} \\
    \md{T}_{\varphi}^4 &:=& \langle \varphi(\bsigma) - \varphi_{h}(\bsigma_h),\delta_{\mbf{I}^{\bq}} \rangle_{\Gamma_h} \\
    \md{T}_{\varphi}^5 &:=& \langle \varphi(\bsigma) - \varphi_{h}(\bsigma_h), \kap(\bsigma_h)\, (\kap^{-1}(\bsigma) - \kap^{-1}(\bsigma_h))\, \delta_{\Pi_{\mbf{V}}\bq} \rangle_{\Gamma_h}  \\
    \md{T}_{\varphi}^6 &:=& \langle \varphi(\bsigma) - \varphi_{h}(\bsigma_h),  \kap(\bsigma_h)\, (\kap^{-1}(\bsigma) - \kap^{-1}(\bsigma_h))\, \delta_{\mbf{I}^{\bq}}  \rangle_{\Gamma_h}  \\
    \md{T}_{\varphi}^7 &:=& \langle \varphi(\bsigma) - \varphi_{h}(\bsigma_h), \kap(\bsigma_h)\, (\kap^{-1}(\bsigma) - \kap^{-1}(\bsigma_h))\, \mbf{I}^{\bq} \cdot \mbf{n} \rangle_{\Gamma_h}  \\
    \md{T}_{\varphi}^8 &:=& \langle \varphi(\bsigma) - \varphi_{h}(\bsigma_h),  \kap(\bsigma_h)\, (\kap^{-1}(\bsigma) - \kap^{-1}(\bsigma_h))\, \Pi_{\mbf{V}}\bq \cdot \mbf{n} \rangle_{\Gamma_h}.
    \end{array}
    \end{equation*}
By a combined use of arguments similar to those appearing in Lemma \ref{lem:norm_error},  it is possible to deduce 
    \begin{equation}\label{estimate-T_boundary-P2}
    \begin{array}{rl}
    |\md{T}_{\varphi}^1| &\leq \dfrac{1}{2\delta_1} \| \kap^{1/2}(\bsigma_h)\, l^{-1/2}( \varphi(\bsigma) - \varphi_{h}(\bsigma_h)  \|^2_{\Gamma_h} + \dfrac{\delta_1}{6} \, \overline{\kap}^{-1}\, \| \errorq \|^2_{\compD}, \\ 
    |\md{T}_{\varphi}^2| &\leq \dfrac{1}{2\delta_2} \| \kap^{1/2}(\bsigma_h)\, l^{-1/2}( \varphi(\bsigma) - \varphi_{h}(\bsigma_h) \|^2_{\Gamma_h}  + \dfrac{\delta_2}{6}\, \| \tau^{1/2}(\erroru - \errortu)\|^2_{\partial \mc{T}_h},  \\
    |\md{T}_{\varphi}^3| &\leq \dfrac{1}{2\delta_3} \| \kap^{1/2}(\bsigma_h)\, l^{-1/2}( \varphi(\bsigma) - \varphi_{h}(\bsigma_h) \|^2_{\Gamma_h} + \dfrac{\delta_3}{2} \, R \, \underline{\kap}^{-1} \, \| (h^{\perp})^{1/2} \mbf{I}^{\bq} \cdot \mbf{n})\|^2_{\Gamma_h} , \\
    |\md{T}_{\varphi}^4| &\leq \dfrac{1}{2\delta_3} \| \kap^{1/2}(\bsigma_h)\, l^{-1/2}( \varphi(\bsigma) - \varphi_{h}(\bsigma_h) \|^2_{\Gamma_h}  + \dfrac{\delta_3}{6} \, \underline{\kap}^{-1} \, R^2 \,  \| h^{\perp} \partial_n (\mbf{I}^{\bq} \cdot \mbf{n})\|^2_{\compD^c} , \\
    |\md{T}_{\varphi}^5| &\displaystyle \leq \dfrac{1}{2\delta_3} \| \kap^{1/2}(\bsigma_h)\, l^{-1/2}( \varphi(\bsigma) - \varphi_{h}(\bsigma_h) \|^2_{\Gamma_h}  + \dfrac{\delta_3}{3}\, \overline{\kap} \, R^2 \, \sup_{\mbf{x}\in e \subseteq \Gamma_h} \| h^{\perp} \partial_n (\Pi_{\mbf{V}}\bq \cdot \mbf{n} ) \|^2_{l(\mbf{x})} \, L^2 \, \left( \|\errorsigma\|_{\compD}^2 + \| \mbf{I}^{\bsigma} \|_{\compD}^2  \right), \\
    |\md{T}_{\varphi}^6| &\leq \dfrac{1}{2\delta_3} \| \kap^{1/2}(\bsigma_h)\, l^{-1/2}( \varphi(\bsigma) - \varphi_{h}(\bsigma_h) \|^2_{\Gamma_h}  + \dfrac{2\delta_3}{3}\,  \overline{\kap} \, \underline{\kap}^{-2}  \, R^2 \, \| h^{\perp} \partial_n (\mbf{I}^{\bq} \cdot \mbf{n})\|^2_{\compD^c} ,\\
    |\md{T}_{\varphi}^7| &\leq \dfrac{1}{2\delta_3} \| \kap^{1/2}(\bsigma_h)\, l^{-1/2}( \varphi(\bsigma) - \varphi_{h}(\bsigma_h) \|^2_{\Gamma_h}  + 2  \, \delta_3 \, R\, \overline{\kap} \, \underline{\kap}^{-2} \,   \| (h^{\perp})^{1/2} \mbf{I}^{\bq} \cdot \mbf{n})\|^2_{\Gamma_h}  ,\\
    |\md{T}_{\varphi}^8| &\leq \dfrac{1}{2\delta_3} \| \kap^{1/2}(\bsigma_h)\, l^{-1/2}( \varphi(\bsigma) - \varphi_{h}(\bsigma_h) \|^2_{\Gamma_h}  + \delta_3\,  \overline{\kap} \, R\  \|(h^{\perp})^{1/2}\, \Pi_{\mbf{V}} \bq \cdot \mbf{n} \|_{\infty}^2\,  L^2\, \left( \|\errorsigma\|_{\compD}^2 + \| \mbf{I}^{\bsigma} \|_{\compD}^2  \right) ,
    \end{array}
    \end{equation}
and
    \begin{equation}\label{estimate1-aux-error-P2}
    \begin{array}{rl}
	|( \mbf{I}^{\bsigma}, \errorq )_{\mc{T}_h}| &\leq \dfrac{1}{2\, \delta_{4}} \| \errorq \|^2_{\compD} + \dfrac{\delta_{4}}{2}\, \| \mbf{I}^{\bsigma}\|^2_{\compD}, \\
	|( \mbf{I}^{\bq}, \errorsigma )_{\mc{T}_h}| &\leq \dfrac{1}{2\, \delta_{5}} \| \kap^{1/2}(\bsigma_h) \errorsigma \|^2_{\compD} + \dfrac{\delta_5}{2}\, \underline{\kappa}^{-1}\,   \| \mbf{I}^{\bq}\|^2_{\compD}, \\
	|(\kap(\bsigma) \mbf{I}^{\bsigma}, \errorsigma )_{\mc{T}_h}| &\leq \dfrac{1}{2\, \delta_{5}} \| \kap^{1/2}(\bsigma_h) \errorsigma \|^2_{\compD} + \dfrac{\delta_{5}}{2}\, \underline{\kappa}^{-1}\, \overline{\kap}^2   \| \mbf{I}^{\bsigma}\|^2_{\compD}, \\
	|(\kap^(\bsigma) - \kap(\bsigma_h)) \, \Pi_{\mbf{V}}\bsigma,\errorsigma)_{\mc{T}_h}| &\leq \dfrac{1}{2\, \delta_{5}} \,  \| \kap^{1/2}(\bsigma_h) \errorsigma \|^2_{\compD} + \delta_{5}\, \underline{\kap}^{-1}\,  \|\Pi_{\mbf{V}}\bq\|_{\infty}^2 \, L^2\,  \left( \|\errorsigma\|_{\compD}^2 + \| \mbf{I}^{\bsigma} \|_{\compD}^2  \right). \\
	|\md{T}^{f} | &\leq L_{f} \, (\|\erroru\|_{\compD} + \|I^u\|_{\compD})\, \|\erroru\|_{\compD}.
	\end{array}
	\end{equation}
Then, taking the test function $s=\errorq$ in the second equation of \eqref{error projection-P2}, we get
    \begin{equation}\label{estimate2-aux-error-P2}
    \|\errorq\|^2_{\compD} \leq \left( 4\, \overline{\kap} + 8\, \underline{\kap}^{-1}\,  L^2\, \|\Pi_{\mbf{V}}\bq\|^2_{\infty} \right) \, \|\kap^{1/2}(\bsigma_h) \, \errorsigma\|_{\compD}^2 + 4\, \|\mbf{I}^{\bq}\|^2_{\compD} + 4 \, \max\{\overline{\kap}, 2\, L^2\, \|\Pi_{\mbf{V}} \, \bsigma\|^2_{\infty} \} \, \|\mbf{I}^{\bsigma}\|^2_{\compD}.
    \end{equation}
If the Lipschitz constants $L$ and $L_{f}$ are sufficiently small, a direct application of \eqref{estimate2-aux-error-P2} in the first equations of \eqref{estimate-T_boundary-P2} and \eqref{estimate1-aux-error-P2}, together with the choices $\delta_1=1, \delta_2=3, \delta_3=12, \delta_4=24/\overline{\kap}, \delta_5=18$ yield the following estimate for the right hand side of \eqref{eq:norm-error_aux_2}  
    \begin{align}
    \nonumber
    \|\kap^{1/2}(\bsigma_h) \, \errorsigma\|^2_{\compD} + \|\tau^{1/2}(\erroru - \errortu) \|^2_{\partial \mc{T}_h} +  \| \kap^{1/2}(\bsigma_h) & \, l^{-1/2} \, \,(\varphi(\bsigma) - \varphi_{h}(\bsigma_h) )\|^2_{\Gamma_h}   \\[2ex]
    \label{estimate3-aux-error-P2}
    \lesssim\,& \projerrorq^2 + \projerrorsigma^2 +  L_{f} \, (\|\erroru\|_{\compD} + \|I^u\|_{\compD})\, \|\erroru\|_{\compD}.
    \end{align}
In the expression above, the term $\projerrorsigma$ has been defined according to \eqref{def:lambda_q}. By combining \eqref{estimate2-aux-error-P2} and \eqref{estimate3-aux-error-P2} we get
    \begin{equation}\label{eq: est. triple norm P2_a}
    \triple{(\errorsigma, \errorq, \erroru-\errortu, \varphi-\varphi_{h})}^2_{\bsigma_h}     \lesssim \projerrorq^2 + \projerrorsigma^2 +  L_{f} \, (\|\erroru\|_{\compD} + \|I^u\|_{\compD})\, \|\erroru\|_{\compD}.
    \end{equation}
The following result allows us to estimate the term $\erroru$ in \eqref{eq: est. triple norm P2_a}  by means of a duality argument. The proof technique is analogous to the one used for Lemma \ref{lem:estimation E_u}.

\begin{lem}\label{lem:estimation E_u-P2}
Given the regularity condition \eqref{regularity dual problem}, assume that the Lipschitz constant is such that $L_{f}$ is small enough, and consider the discrete spaces to be of polynomial degree $k\geq 1$. Then, 
	\begin{align}\label{estimation E_u-P2}
         \|\erroru\|_{\compD}^2  \lesssim 3\, h  \triple{(\errorsigma,\errorq,\erroru-\errortu, \varphi-\varphi_h)}_{\boldsymbol\sigma_h}^2  + 6 \bigg(  (h + L^2\, h^2) \projerrorq^2 + (L^2+ h)\,  \projerroru^2 \bigg).
	\end{align}
\end{lem}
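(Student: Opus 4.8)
The plan is to reproduce, for the enlarged mixed system \eqref{error projection-P2}, the duality argument of Lemma \ref{lem:estimation E_u}, now accounting for the auxiliary variable $\bsigma$ and for the nonlinearity entering through $\kap(\bsigma)$. First I would fix an arbitrary $\Theta\in L^2(\Omega)$ and let $(\bsy{\phi},\psi)$ solve the dual problem \eqref{eq:DualProblem} linearized about $\bsigma_h$, whose stability is quantified by the regularity hypothesis \eqref{regularity dual problem} with constant $C_{\text{reg}}$. Testing the error equations \eqref{error projection-P2} against the HDG projections $\bsy{\Pi}_{\mbf{V}}\bsy{\phi}$, $\Pi_W\psi$ and $P_M(\bsy{\phi}\cdot\mbf{n})$, and using the dual equations, I would obtain a splitting
\[
(\erroru,\Theta)_{\mc{T}_h} = \md{T}_{\bsigma} + \md{T}_{u} + \md{T}_{f},
\]
in direct analogy with Lemma \ref{lem:StabilityS2}: $\md{T}_{\bsigma}$ collects the volume contributions (carrying $\errorsigma$, $\mbf{I}^{\bsigma}$ and the nonlinear difference $\kap(\bsigma)-\kap(\bsigma_h)$), $\md{T}_f$ is the source contribution, and $\md{T}_u$ gathers every boundary and transfer contribution.

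I would then dispatch the two simpler groups. For $\md{T}_f$, Cauchy--Schwarz, the approximation property of $\Pi_W$, the Lipschitz bound \eqref{Lipschitz_F} and \eqref{regularity dual problem} give $|\md{T}_f|\lesssim C_{\text{reg}}\,L_f\,(\|\erroru\|_{\compD}+\|I^u\|_{\compD})\,\|\Theta\|_{\Omega}$, exactly as in \eqref{eq:noundT_f}. For $\md{T}_{\bsigma}$ I would separate a consistency part, controlled by the dual projection error $\|\bsy{\Pi}_{\mbf{V}}\bsy{\phi}-\bsy{\phi}\|_{\compD}\lesssim C_{\text{reg}}\,h^{\min\{1,k\}}\|\Theta\|_{\Omega}$ and therefore bounded, for $k\ge1$, by $h^{1/2}\,\triple{(\errorsigma,\errorq,\erroru-\errortu,\varphi-\varphi_h)}_{\bsigma_h}$ plus a higher-order volume projection error, from a nonlinear part, where the vector Lipschitz estimate \eqref{eq:Lipschitz_k-P2} replaces $\kap(\bsigma)-\kap(\bsigma_h)$ by $L(\|\errorsigma\|_{\compD}+\|\mbf{I}^{\bsigma}\|_{\compD})$ multiplied by an $L^{\infty}$-bounded projection factor.

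The bulk of the work, and the step I expect to be the main obstacle, is $\md{T}_u$. Following the proof of Lemma \ref{lem:estimation E_u}, I would write $\md{T}_u=\sum_i\md{T}_u^i$, the summands built from $\psi+l\,\partial_n\psi$, the quadrature remainders $\delta_{\mbf{v}}$ of \eqref{def:delta}, the projections $(Id_M-P_M)$, and the factors $\kap(\bsigma_h)\bigl(\kap^{-1}(\bsigma)-\kap^{-1}(\bsigma_h)\bigr)$ that arise from the transferred datum $\varphi(\bsigma)-\varphi_h(\bsigma_h)$. Each summand is controlled by pairing Cauchy--Schwarz with the $\delta$-estimates \eqref{ineq: est delta 1}--\eqref{ineq: est delta 3}, the path-length bound $l(\boldsymbol{x})\lesssim R\,h$, the auxiliary estimate \eqref{estim-aux for Tu-p4} for $\|l^{-1}\,\psi\|_{\Gamma_h}$ and the constant $\tilde c$ of Lemma \ref{Lema auxiliar para T_u}, always invoking the proximity assumptions \eqref{eq:Assumptions} to keep $r_e$ and the extrapolation patches under control; the nonlinear summands additionally use \eqref{eq:Lipschitz_k-P2}. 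The delicate point is the bookkeeping of the exact powers of $h$, $R$ and $l$, so that every transfer term carries at least $h^{1/2}$ against the triple norm, $h^{1/2}+L\,h$ against $\projerrorq$, and $L+h^{1/2}$ against $\projerroru$; this is precisely where \eqref{eq:Assumptions} is indispensable.

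Finally I would take $\Theta=\erroru$ on $\compD$ and $\Theta=0$ on $\compD^c$, so that $\|\Theta\|_{\Omega}=\|\erroru\|_{\compD}$, insert all the bounds into the splitting and divide by $\|\erroru\|_{\compD}$. The contribution $C_{\text{reg}}\,L_f\,\|\erroru\|_{\compD}$ coming from $\md{T}_f$ is absorbed on the left-hand side once $L_f$ is small enough, which leaves the linear estimate
\[
\|\erroru\|_{\compD}\lesssim h^{1/2}\,\triple{(\errorsigma,\errorq,\erroru-\errortu,\varphi-\varphi_h)}_{\bsigma_h}+(h^{1/2}+L\,h)\,\projerrorq+(L+h^{1/2})\,\projerroru,
\]
the exact analogue of \eqref{eq:estimation E_u}. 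Squaring and applying the elementary inequalities $(x+y+z)^2\le 3(x^2+y^2+z^2)$ and $(a+b)^2\le 2(a^2+b^2)$, which turn $(h^{1/2}+L\,h)^2$ into $2(h+L^2h^2)$ and $(L+h^{1/2})^2$ into $2(L^2+h)$, produces exactly \eqref{estimation E_u-P2}, with the factor $3h$ in front of the squared triple norm and $6$ in front of the projection-error cluster.
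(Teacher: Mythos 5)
Your proposal follows essentially the same route as the paper's proof: a duality argument with the dual problem \eqref{eq:DualProblem}, a decomposition of $(\erroru,\Theta)_{\mc{T}_h}$ into volume, source and boundary/transfer contributions (the paper splits off the $\mbf{I}^{\bq}$ volume term as a separate $\md{T}_{\bq}$, which you fold into $\md{T}_{\bsigma}$), term-by-term bounds via the $\delta$-estimates, Lemma \ref{Lema auxiliar para T_u} and the Lipschitz assumptions, and finally the choice $\Theta=\erroru$ with absorption of the $L_f$ contribution. Your explicit recovery of the constants $3h$ and $6$ by first deriving the linear estimate and then squaring matches the stated bound \eqref{estimation E_u-P2}, so the argument is correct to the same standard as the paper's own (rather terse) proof.
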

\begin{proof}
Consider the pair of functions function $\phi$ and $\psi$ satisfying the dual problem  \eqref{eq:DualProblem}. We will use them to define the following terms
    \begin{align*}
     \md{T}_{\bsigma} &:= - ( \bsigma-\bsigma_h, \bsy{\Pi}_{\mbf{V}} \bsy{\phi} - \bsy{\phi})_{\mc{T}_h} + ( (\kap(\bsigma_h) - \kap(\bsigma) )(\mbf{I}^{\bsigma} + \bsy{\Pi}_{\mbf{V}} \bsigma) ,  \nabla \psi)_{\mc{T}_h} , \\
     \md{T}_{\bq} &:= - (\mbf{I}^{\bq} ,\nabla \psi)_{\mc{T}_h}, \\
     \md{T}_{f} &:= (f(u) - f(u_h), \Pi_W \psi)_{\mc{T}_h}, \\
     \md{T}_{u} &:= \langle \errortu, P_M(\bsy{\phi}\cdot \mbf{n}) \rangle_{\Gamma_h} - \langle \errortq \cdot \mbf{n}, \Pi_W \psi\rangle_{\Gamma_h} .
    \end{align*}
With all the definitions above and the equations in \eqref{eq:DualProblem}, it is possible to decompose the inner product between $\varepsilon^u$ and the function $\Theta$ appearing as the source term of the dual problem in the form
	\begin{align}\label{eq:duality-descomp-P2}
	(\varepsilon^u, \Theta)_{\mc{T}_h}& = \md{T}_{\bsigma} + \md{T}_{\bq} +  \md{T}_{f} + \md{T}_{u} ,
	\end{align}
Following arguments similar to the ones applied in Lemma \ref{lem:StabilityS2}, it is possible to bound each of the terms in the decomposition as 
    \begin{align*}
    |\md{T}_{\bsigma}| &\lesssim h^{\min\{1, k\}}\, \|\kap^{1/2}(\bsigma_h) (\errorsigma + \mbf{I}^{\bsigma}) \|_{\compD} \, \|\Theta\|_{\Omega} + L \, ( \|\kap^{1/2}(\bsigma_h) \errorsigma\| + \|\mbf{I}^{\bsigma}\| ) \, \|\Theta\|_{\Omega} \\
    |\md{T}_{\bq}| &\lesssim h^{\min\{1,k\}} \|\mbf{I}\|_{\compD} \, \|\Theta\|_{\Omega} , \\
    |\md{T}_{f}| &\lesssim L_{f} \, \left(\|\erroru\|_{\compD} + \|I^u \|_{\compD} \right)\, \|\Theta\|_{\Omega}
    \end{align*}
The bound for the final term in \eqref{eq:duality-descomp-P2} requires decomposing it in the form $\md{T}_{u} := \sum_{i=1}^{10} \md{T}_{u}^i$, where:
    \begin{equation*}
    \begin{array}{lll}
	\md{T}_{u}^1 := -\langle \kap(\bsigma_h)\,  l^{-1}\, (\varphi(\bsigma) - \varphi_{h}(\bsigma_h)), \psi + l \partial_n \psi \rangle_{\Gamma_h}, &  & \md{T}_{u}^7 := -\langle\tau(\erroru - \errortu), P_M\psi \rangle _{\Gamma_h}, \\
	\md{T}_{u}^2 := -\langle \kap(u_h)\, (\varphi(\bsigma) - \varphi_{h}(\bsigma_h)), (Id_M - P_M)\partial_n \psi \rangle_{\Gamma_h}, &  &  \md{T}_{u}^8 := -\langle \kap(\bsigma_h)\,  (\varphi(\bsigma) - \varphi_{h}(\bsigma_h))\, \delta_{\mbf{I}^{\bq}}, \psi  \rangle_{\Gamma_h}\\
	\md{T}_{u}^3 := \pdual{\delta_{\mbf{I}^q}, \psi}_{\Gamma_h}, &  & \md{T}_{u}^9 := -\langle \kap(\bsigma_h)\,  (\varphi(\bsigma) - \varphi_{h}(\bsigma_h))\, \delta_{\bsy{\Pi}_{\mbf{V}} \bq}, \psi  \rangle_{\Gamma_h} , \\
	\md{T}_{u}^4 := \pdual{\mbf{I}^{\bq} \cdot \mbf{n}, (Id_M-P_M) \psi }_{\Gamma_h} , &  & \md{T}_{u}^{10} := -\langle \kap(\bsigma_h)\,  (\varphi(\bsigma) - \varphi_{h}(\bsigma_h))\, \mbf{I}^{\bq}\cdot \mbf{n}, \psi  \rangle_{\Gamma_h}. \\
	\md{T}_{u}^5 := -\pdual{\tau P_M I^u, \psi}_{\Gamma_h}, &  & \md{T}_{u}^{11} := -\langle \kap(\bsigma_h)\,  (\varphi(\bsigma) - \varphi_{h}(\bsigma_h))\, \bsy{\Pi}_{\mbf{V}} \bq \cdot \mbf{n}, \psi  \rangle_{\Gamma_h}, \\
	\md{T}_{u}^6 := \pdual{\delta_{\bsy{\varepsilon}^{\bq} }, \psi}_{\Gamma_h}. &  & 
	\end{array}
	\end{equation*}
These terms can be estimated by arguments like the ones detailed in Lemma \ref{lem:estimation E_u}.  Finally, taking $\Theta=\erroru$ in \eqref{eq:duality-descomp-P2} and considering the estimates for the components of  $\md{T}_u^i$ it is possible to deduce that 
    \begin{equation*}
        \|\erroru\|_{\compD}^2  \lesssim 3\, h  \triple{(\errorsigma,\errorq,\erroru-\errortu, \varphi-\varphi_h)}_{\boldsymbol\sigma_h}^2  + 6 \bigg(  (h + L^2\, h^2) \projerrorq^2 + (L^2+ h)\,  \projerroru^2 \bigg).
    \end{equation*}
\end{proof}    
The result of the previous Lemma allows us to estimate the error incurred by the HDG approximation by that of the HDG projection onto the discrete space, as we now show. 
\begin{thm}\label{thm:estimate_error-P2}
If $L$ is small enough and the discrete spaces are of polynomial degree $k\geq 1$, then there exists $h_0>0$ such that, for all $h\leq h_0$, we have 
	\begin{eqnarray}\label{eq:estimate_error-P2}
    \triple{ (\errorsigma,\errorq ,\erroru - \errortu, \varphi - \varphi_{h})}_{\boldsymbol\sigma_h}^2  \lesssim \projerrorq^2 + \projerroru^2 + \projerrorsigma^2. 
	\end{eqnarray}
\end{thm}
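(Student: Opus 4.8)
The plan is to obtain \eqref{eq:estimate_error-P2} by combining the two estimates already assembled in this section—the energy bound \eqref{eq: est. triple norm P2_a} and the duality bound \eqref{estimation E_u-P2} of Lemma \ref{lem:estimation E_u-P2}—and then decoupling the mutual dependence between the triple norm and $\|\erroru\|_{\compD}$ by an absorption argument. This is exactly the mechanism through which Theorem \ref{thm:estimate_error-P1} was deduced from Lemmas \ref{lem:norm_error} and \ref{lem:estimation E_u}, so the argument is short and mostly bookkeeping.

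Abbreviating $T:=\triple{(\errorsigma,\errorq,\erroru-\errortu,\varphi-\varphi_{h})}^2_{\bsigma_h}$, I would first treat the nonlinear-source contribution on the right of \eqref{eq: est. triple norm P2_a}. Young's inequality gives $(\|\erroru\|_{\compD}+\|I^u\|_{\compD})\,\|\erroru\|_{\compD}\lesssim \|\erroru\|_{\compD}^2+\|I^u\|_{\compD}^2$, so that \eqref{eq: est. triple norm P2_a} becomes
\[
T\;\lesssim\;\projerrorq^2+\projerrorsigma^2+L_{f}\big(\|\erroru\|_{\compD}^2+\|I^u\|_{\compD}^2\big).
\]
Substituting the bound \eqref{estimation E_u-P2} for $\|\erroru\|_{\compD}^2$ then produces on the right-hand side a term proportional to $L_{f}\,h\,T$, together with further multiples of $\projerrorq^2$, $\projerroru^2$ and $\|I^u\|_{\compD}^2$. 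The decisive step is the absorption of this $L_{f}\,h\,T$ term: since all the constants hidden in \eqref{eq: est. triple norm P2_a} and \eqref{estimation E_u-P2} depend only on $\overline{\kap}$, $\underline{\kap}$, $R$, $\overline{\tau}$ and the (bounded) norms of $\bsy{\Pi}_{\mbf{V}}\bq$ and $\bsy{\Pi}_{\mbf{V}}\bsigma$, and \emph{not} on $h$, one may fix $h_0$ small enough that the coefficient of $T$ on the right is at most $\tfrac12$; moving it to the left leaves $\tfrac12 T$ there. What survives on the right are only multiples of $\projerrorq^2$, $\projerrorsigma^2$, $\projerroru^2$ and $\|I^u\|_{\compD}^2$, the last of which is itself dominated by $\projerroru^2$ through the definition \eqref{def:lambda_u} for $h\le h_0$. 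Collecting these yields exactly \eqref{eq:estimate_error-P2}.

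The only genuine obstacle is the bookkeeping of the $h$- and $L$-dependent factors needed to make the absorption legitimate. Concretely, one must verify that the threshold $h_0$—and the admissible sizes of the Lipschitz constants $L$ and $L_{f}$, which are already inherited from the hypotheses under which \eqref{eq: est. triple norm P2_a} and Lemma \ref{lem:estimation E_u-P2} were established—can be chosen uniformly over the whole family of admissible triangulations. This is precisely what the $h$-independence of the hidden constants noted above guarantees, so no new analytical difficulty arises beyond careful tracking of these constants.
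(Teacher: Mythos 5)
Your proposal is correct and follows essentially the same route as the paper: the published proof likewise applies Young's inequality to the $L_{f}$-term in \eqref{eq: est. triple norm P2_a}, substitutes the duality bound \eqref{estimation E_u-P2} of Lemma \ref{lem:estimation E_u-P2}, and absorbs the resulting $L_{f}\,h$-scaled multiple of the triple norm into the left-hand side (there phrased as smallness of $L_{f}$ together with $h\leq h_0$, which is the same absorption mechanism as your choice of $h_0$). The surviving terms are then dominated by $\projerrorq^2+\projerroru^2+\projerrorsigma^2$, with $\|I^u\|_{\compD}$ controlled by $\projerroru$ via \eqref{def:lambda_u}, exactly as you indicate.
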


\begin{proof}
Using simple algebraic arguments, note that the term \eqref{eq: est. triple norm P2_a} can be rewritten as
    \begin{equation*}
        \triple{(\errorsigma, \errorq, \erroru-\errortu, \varphi-\varphi_{h})}^2_{\bsigma}     \lesssim \projerrorq^2 + \projerrorsigma^2 + \frac{3}{2} \, L_{f}\|\erroru\|_{\compD}^2 + \frac{1}{2}\, L_f\, \projerroru \|\erroru\|_{\compD}
    \end{equation*}
Combined the above with the estimate given in the Lemma \ref{lem:estimation E_u-P2}, we can deduce 
    \begin{align*}
     \left( 1 - \frac{9}{2}\,  L_f\, h \, c \right) \triple{(\errorsigma, \errorq, \erroru-\errortu, \varphi-\varphi_{h})}^2_{\bsigma} \lesssim 9\, L_f  \bigg(  (h + L^2\, h^2) \projerrorq^2 + (L^2+ h)\,  \projerroru^2 \bigg) + \projerrorq^2 + \projerrorsigma^2 + \frac{1}{2} \, L_f \projerroru^2,
    \end{align*}
where $c>0$ is a constant independent of $h$ arising from the symbol $\lesssim$.    
Assuming that $L_f$ is small enough and considering that $h\leq h_0$, te proof is concluded.
\end{proof}

As a consequence of this theorem, it follows that the HDG approximation of the linearized systems will indeed achieve optimal order of convergence with respect to the degree of the polynomial basis, provided that the true solutions are smooth enough.

\begin{crl}\label{corol:estimate_error-P2}
Suppose that assumptions of Theorem \ref{thm:estimate_error-P2} hold. If $u\in H^{k+1}(\Omega)$ and $\bq \in  \mbf{H}^{k+1}(\Omega)$, then
\begin{eqnarray*}
\|\boldsymbol\sigma-\boldsymbol\sigma_h\|_{\Omega} + \|\bq-\bq_h\|_{\Omega} + \|u-u_h\|_{\Omega} \leq C h^{k+1} \left( |u|_{k+1,\Omega} + |\bq|_{k+1,\Omega} + |\bsigma|_{k+1,\Omega} \right).
\end{eqnarray*}
\end{crl}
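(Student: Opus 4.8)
The plan is to deduce the corollary from the triple-norm estimate of Theorem \ref{thm:estimate_error-P2}, the approximation properties of the HDG projection, and a triangle inequality. Since the discrete unknowns are defined on $\compD$, the errors there split as $\bq-\bq_h = \errorq + \mbf{I}^{\bq}$, $\bsigma-\bsigma_h = \errorsigma + \mbf{I}^{\bsigma}$, and $u-u_h = \erroru + I^u$, so by the triangle inequality it suffices to bound each of the six pieces separately over $\compD$ and conclude with the stated regularity seminorms.

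For the \emph{errors of the projection} $\mbf{I}^{\bq}$, $\mbf{I}^{\bsigma}$, $I^u$ I would invoke the standard approximation estimates for the HDG projection collected in Appendix \ref{sec:HDGprojection}: under the regularity $\bq,\bsigma \in \mbf{H}^{k+1}(\Omega)$ and $u \in H^{k+1}(\Omega)$ one has $\|\mbf{I}^{\bq}\|_{\compD} \lesssim h^{k+1}|\bq|_{k+1,\Omega}$, and analogously for $\mbf{I}^{\bsigma}$ and $I^u$. As noted in the remark preceding Theorem \ref{thm:estimate_error-P1}, the same estimates together with the local proximity condition (which guarantees $h^{\perp},H_e^{\perp}=\mathcal O(h)$) and the usual trace and inverse scaling arguments of \cite{SaSaSo2019,CoGoSa2010} show that the full projection-error quantities of \eqref{def:lambda} are also optimal, namely $\projerrorq + \projerrorsigma + \projerroru \lesssim h^{k+1}\big(|\bq|_{k+1,\Omega} + |\bsigma|_{k+1,\Omega} + |u|_{k+1,\Omega}\big)$.

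For the \emph{projections of the error} I would use Theorem \ref{thm:estimate_error-P2}. Because $\kap \geq \underline{\kap} > 0$, the norm \eqref{def:triple_norm-P2} controls both $\|\errorq\|_{\compD}$ (its first component) and $\underline{\kap}^{1/2}\|\errorsigma\|_{\compD}$, so that
\begin{equation*}
\|\errorq\|_{\compD}^2 + \underline{\kap}\,\|\errorsigma\|_{\compD}^2 \leq \triple{(\errorsigma,\errorq,\erroru-\errortu,\varphi-\varphi_{h})}_{\bsigma_h}^2 \lesssim \projerrorq^2 + \projerroru^2 + \projerrorsigma^2 .
\end{equation*}
The quantity $\erroru$ does not appear in this norm, so I would feed the bound of Theorem \ref{thm:estimate_error-P2} into Lemma \ref{lem:estimation E_u-P2}, obtaining $\|\erroru\|_{\compD}^2 \lesssim h\,\triple{\cdots}_{\bsigma_h}^2 + \projerrorq^2 + \projerroru^2 \lesssim \projerrorq^2 + \projerroru^2 + \projerrorsigma^2$. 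Combining these bounds with the approximation estimates of the previous paragraph and the triangle inequality then delivers the claimed $O(h^{k+1})$ convergence rate for $\|\bsigma-\bsigma_h\|_{\compD} + \|\bq-\bq_h\|_{\compD} + \|u-u_h\|_{\compD}$.

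I expect the only genuinely delicate step to be the claim that the nonstandard components of $\projerrorq$ and $\projerrorsigma$ — those measured on the extension region $\compD^c$ and on $\Gamma_h$ with the weights $h^{\perp}$ and $(h^{\perp})^{1/2}$ — retain the optimal order $h^{k+1}$. This is precisely where the local proximity condition and the inverse/trace scaling enter, and it is the ingredient imported from \cite{CoGoSa2010,SaSaSo2019}; everything else reduces to the triangle inequality and the uniform lower bound on $\kap$.
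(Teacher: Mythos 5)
Your proposal is correct and follows essentially the same route as the paper, which offers no separate argument for this corollary beyond combining Theorem \ref{thm:estimate_error-P2}, the remark that the projection quantities $\projerrorq$, $\projerrorsigma$, $\projerroru$ are of order $h^{k+1}$ for $H^{k+1}$-regular solutions, and Lemma \ref{lem:estimation E_u-P2} to recover $\|\erroru\|_{\compD}$ (which the triple norm does not control directly) --- precisely the chain you reconstruct, including the correct observation that the lower bound $\underline{\kap}$ lets the triple norm control both $\|\errorq\|_{\compD}$ and $\|\errorsigma\|_{\compD}$. The only caveat, shared equally by the paper, is that this argument yields the estimate in the norms over $\compD$ while the statement is written over $\Omega$; the passage across the $\mathcal{O}(h)$-thin gap region $\Omega\setminus\compD$ via the extrapolation machinery is left implicit in both.
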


\section*{Acknowledgments}
 Nestor S\'anchez is supported by the Scholarship Program of CONICYT-Chile. Manuel E. Solano was partially funded by CONICYT--Chile through FONDECYT project No. 1200569 and by Project AFB170001 of the PIA Program: Concurso Apoyo a Centros Cient\'ificos y Tecnol\'ogicos de Excelencia con Financiamiento Basal. Tonatiuh S\'anchez--Vizuet was partially supported by the National Science Foundation throught the grant NSF-DMS-2137305 ``LEAPS-MPS: Hybridizable discontinuous Galerkin methods for non-linear integro-differential boundary value problems in magnetic plasma confinement''. 
\appendix
\setcounter{lem}{0}
\renewcommand{\thelem}{\Alph{section}\arabic{lem}}
%
\section{HDG projection}\label{sec:HDGprojection}	
%
The HDG projectors introduced by \cite{CoGoSa2010} and their properties have been used extensively throughout the text. Here we provide a quick definition and summary of the properties used in this article.

Consider constants $l_u, l_{\mbf{q}} \in [0,k]$ and functions $(\boldsymbol q,u) \in H^{1+l_q}(T) \times H^{1+l_u}(T)$. Moreover, recall the discrete spaces 
	\begin{align*}
	\mbf{V}_h &:= \{\mbf{v}\in \boldsymbol L^2(\mc{T}_h) : \mbf{v}|_T \in [\md{P}_k(T)]^d, \ \forall \ T \in \mc{T}_h \}, \\
	W_h &:= \{w\in L^2(\mc{T}_h) : w|_T \in \md{P}_k(T), \ \forall \ T \in \mc{T}_h \}, \\
	M_h &:= \{\mu\in L^2(\mc{E}_h) : \mu|_T \in \md{P}_k(e), \ \forall \ e \in \mc{E}_h \},
	\end{align*}
defined in \eqref{eq:PolynomialSpaces} in the text. We will denote by $\bsy{\Pi}(\mbf{q},u):=(\bsy{\Pi}_{\mathrm v}\mbf{q},\Pi_{\mathrm w} u)$ the projection over $\mbf{V}_h\times W_h$ defined by the unique element-wise solutions of
	\begin{subequations}\label{eq:HDGprojector}
	\begin{align}
	(\bsy{\Pi}_{\mathrm v}\mbf{q}, \mbf{v})_T &= (\mbf{q}, \mbf{v})_T &  &\forall \ \mbf{v} \in [\md{P}_{k-1}(T)]^d, \label{properties projector Pi_v} \\
	(\Pi_{\mathrm w} u, w)_T &= (u,w)_T & &\forall \ w\in \md{P}_{k-1}(T), \label{properties projector Pi_w} \\
	\pdual{\bsy{\Pi}_{\mathrm v}\mbf{q}\cdot \mbf{n} + \tau \Pi_{\mathrm w} u, \mu}_{F} &= \pdual{\mbf{q} \cdot \mbf{n} + \tau u, \mu}_F & &\forall \ \mu \in \md{P}_k(F), \label{properties projector Pi_h}
	\end{align}
    \end{subequations}
for every element $T\in \mc{T}_h$, and $F\subset \partial T$. Will will denote the $L^2$ projector into $M_h$ by $P_M$. It was proven in \cite{CoGoSa2010} that when the stabilization function is chosen so that $\tau_T^{\max} := \max \tau|_{\partial T}>0$, then there is a constant $C>0$ independent of $T$ and $\tau$ such that   
    \begin{subequations}\label{eq:projection_error}
    \begin{align}
	\|\bsy{\Pi}_{\mathrm v}\mbf{q} - \mbf{q}\|_T &\leq C h_T^{l_{\mbf{q}}+1} |\mbf{q}|_{\mbf{H}^{l_{\mbf{q}}+1}(T)} +  C h_T^{l_u+1} \tau_T^* |u|_{H^{l_u+1}(T)}, \label{error_projector1}\\
	\|\Pi_{\mathrm w} u - u\|_T &\leq C h_T^{l_u+1} |u|_{H^{l_u+1}(T)} + C \dfrac{h_T^{l_{\mbf{q}}+1}}{\tau_T^{\max}} |\nabla \cdot \mbf{q}|_{H^{l_{\mbf{q}}}(T)}\label{error_projector2}. 
	\end{align}
    \end{subequations}
Here $\tau_T^* := \max \tau|_{\partial T \setminus F^*}$ and $F^*$ is a face of $T$ at which $\tau|_{\partial T}$ is maximum. As is customary, the symbol $|\cdot|_{H^s}$ is to be understood as the Sobolev semi norm of order $s\in\mathbb R$.
%
\section{Auxiliary estimates}
%
\paragraph{Duality argument}
We will consider that, given $\Theta \in L^2(\Omega)$, the solution to the auxiliary problem
	\begin{subequations}\label{eq:DualProblem}
	\begin{align}
	&& \kap^{-1}(\zeta) \bsy{\phi}+ \nabla \psi &= 0 & &\text{in } \Omega, \label{eq: dual problem 1} \\
	&& \nabla \cdot \bsy{\phi} &= \Theta & &\text{in } \Omega, \label{eq: dual problem 2} \\
	&& \psi &= 0 & &\text{on } \partial \Omega ,\label{eq: dual problem 3}
	\end{align}
	\end{subequations}
satisfies the regularity estimate
	\begin{equation}\label{regularity dual problem}
	\| \bsy{\phi} \|_{H^1(\Omega)} + \| \psi \|_{H^2(\Omega)} \leq C_{reg} \|\Theta\|_{\Omega}.
	\end{equation}	
where $ C_{reg} >0$ depends on the domain $\Omega$. Moreover, if $\Omega_h$ is a subdomain of $\Omega$ with boundary $\Gamma_h:=\partial\Omega_h$, $l(x)$ is the length of the transfer path connecting $\Gamma:=\partial\Omega$ to $\Gamma_h$ as defined in Section \ref{sec:ExtendedDomain}, $P_M$ is the $L^2-$projector onto the discrete space $M_h$ defined in \eqref{eq:PolynomialSpaces}, and $Id_M$ is the identity operator in $M_h$ we have

\begin{lem}\label{Lema auxiliar para T_u}\cite[Lemma 5.5]{CoQiuSo2014}
Suppose  Assumption \eqref{eq:S4} and the elliptic regularity inequality \eqref{regularity dual problem} hold. Then, there exists a constant $\tilde{c}>0$, such that:
	\begin{subequations}
	\begin{align}
	\|{(h^{\perp})}^{-1/2} (Id_M-P_M) \psi \|_{\Gamma_h} &\leq \tilde{c} \,  h \|\Theta\|_{\Omega}, \label{estim-aux for Tu-p1} \\
	\|l^{1/2} (Id_M-P_M)\partial_n \psi \|_{\Gamma_h} &\leq \tilde{c} \,  R^{1/2} \, h \|\Theta\|_{\Omega}, \label{estim-aux for Tu-p2} \\
	\|l^{-3/2} (\psi + l \partial_n \psi) \|_{\Gamma_h} &\leq \tilde{c} \,  \|\Theta\|_{\Omega}, \label{estim-aux for Tu-p3} \\
	\| l^{-1} \psi \|_{\Gamma_h} &\leq \tilde{c} \,  \|\Theta\|_{\Omega}. \label{estim-aux for Tu-p4}
	\end{align}
	\end{subequations}
\end{lem}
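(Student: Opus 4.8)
The plan is to derive all four estimates from a single device: by \eqref{eq: dual problem 3} the dual solution vanishes on $\Gamma$, so for every $\mbf{x}\in\Gamma_h$ the fundamental theorem of calculus along the transfer path---which by \eqref{eq:S1} is the straight segment $\mbf{x}+s\mbf{n}$, $0\le s\le l(\mbf{x})$---gives
\[
\psi(\mbf{x}) = \psi(\mbf{x}) - \psi(\overline{\mbf{x}}) = -\int_0^{l(\mbf{x})} \partial_n\psi(\mbf{x}+s\mbf{n})\,ds.
\]
The second ingredient is that, since \eqref{eq:S1} makes $T^e_{ext}$ a straight extrusion of $e$ along the constant normal $\mbf{n}$, the map $(\mbf{x},s)\mapsto\mbf{x}+s\mbf{n}$ has unit Jacobian, so $\int_e\int_0^{l(\mbf{x})}|F(\mbf{x}+s\mbf{n})|^2\,ds\,d\mbf{x}=\|F\|^2_{T^e_{ext}}$ and any path integral becomes a genuine volume integral. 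I will repeatedly use $l(\mbf{x})\le H_e^{\perp}=r_e\,h_e^{\perp}\le R\,h_e^{\perp}\lesssim Rh$ (the last step by shape regularity and the local proximity condition), together with the trace inequality and the regularity bound $\|\psi\|_{H^2(\Omega)}\lesssim\|\Theta\|_\Omega$ from \eqref{regularity dual problem}.

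For \eqref{estim-aux for Tu-p3}, I would note that $\psi+l\,\partial_n\psi$ is exactly the first-order Taylor remainder of $\psi$ along the path: subtracting $l\,\partial_n\psi(\mbf{x})$ from the representation above yields $\psi(\mbf{x})+l\,\partial_n\psi(\mbf{x})=-\int_0^{l}\int_0^{s}\partial_n^2\psi(\mbf{x}+r\mbf{n})\,dr\,ds$, where $\partial_n^2\psi=\mbf{n}\cdot(D^2\psi)\,\mbf{n}$. Cauchy--Schwarz then gives $|\psi+l\,\partial_n\psi|\le l^{3/2}\big(\int_0^l|\partial_n^2\psi|^2\,dr\big)^{1/2}$, so after multiplying by $l^{-3/2}$, squaring, integrating over $e$ and converting to the patch integral, $\|l^{-3/2}(\psi+l\,\partial_n\psi)\|_e^2\le\|\partial_n^2\psi\|_{T^e_{ext}}^2$; summing over $e\in\mc{E}_h^{\partial}$ with $\bigcup_e T^e_{ext}=\Omega_h^c$ and invoking \eqref{regularity dual problem} closes it. The genuinely delicate estimate is \eqref{estim-aux for Tu-p4}, where the weight $l^{-1}$ admits no power-of-$h$ bound because $l$ has no lower bound. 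Cauchy--Schwarz alone gives only $|l^{-1}\psi|^2\le l^{-1}\int_0^l|\partial_n\psi|^2\,ds$, the segment average of $|\partial_n\psi|^2$; I would then split $\partial_n\psi(\mbf{x}+s\mbf{n})=\partial_n\psi(\mbf{x})+\int_0^s\partial_n^2\psi\,dr$ to obtain $l^{-1}\int_0^l|\partial_n\psi|^2\,ds\lesssim|\partial_n\psi(\mbf{x})|^2+l\int_0^l|\partial_n^2\psi|^2\,dr$. Integrating over $\Gamma_h$ and converting the second term to a patch integral, the first contributes $\|\partial_n\psi\|_{\Gamma_h}^2\le\|\nabla\psi\|_{\Gamma_h}^2$, controlled by the trace inequality and \eqref{regularity dual problem}, while the second contributes $(\sup_{\Gamma_h}l)\,\|\partial_n^2\psi\|_{\Omega_h^c}^2\lesssim Rh\,\|\psi\|_{H^2(\Omega)}^2$; both are $\lesssim\|\Theta\|_\Omega^2$. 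This trace-splitting trick, which trades the unbounded weight $l^{-1}$ for a boundary trace of $\nabla\psi$ plus a higher-order remainder, is the main obstacle.

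For the two projection-error estimates \eqref{estim-aux for Tu-p1} and \eqref{estim-aux for Tu-p2} the mechanism is different: here I would combine the approximation properties of the $L^2(e)$-projection $P_M$ onto $\md{P}_k(e)$ with the fractional trace regularity inherited from $\psi\in H^2(\Omega)$. On each boundary face, $\psi|_e\in H^{3/2}(e)$ and $\partial_n\psi|_e=(\nabla\psi\cdot\mbf{n})|_e\in H^{1/2}(e)$, so a standard scaling argument gives $\|(Id_M-P_M)\psi\|_e\lesssim h_e^{3/2}|\psi|_{H^{3/2}(e)}$ and $\|(Id_M-P_M)\partial_n\psi\|_e\lesssim h_e^{1/2}|\partial_n\psi|_{H^{1/2}(e)}$. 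For \eqref{estim-aux for Tu-p1}, shape regularity ($h_e^{\perp}\sim h_e$) then gives $(h_e^{\perp})^{-1/2}\|(Id_M-P_M)\psi\|_e\lesssim h_e\,|\psi|_{H^{3/2}(e)}$, and summing with $\sum_e|\psi|_{H^{3/2}(e)}^2\lesssim\|\psi\|_{H^2(\Omega)}^2$ yields the factor $h\,\|\Theta\|_\Omega$. For \eqref{estim-aux for Tu-p2}, I bound $l^{1/2}\le(R\,h_e^{\perp})^{1/2}$ and combine with the $H^{1/2}$ approximation estimate to get $\|l^{1/2}(Id_M-P_M)\partial_n\psi\|_e^2\lesssim R\,h_e^{2}\,|\partial_n\psi|_{H^{1/2}(e)}^2$; summing and using $\sum_e|\partial_n\psi|_{H^{1/2}(e)}^2\lesssim\|\nabla\psi\|_{H^1(\Omega)}^2\lesssim\|\psi\|_{H^2(\Omega)}^2$ produces the claimed $R^{1/2}h\,\|\Theta\|_\Omega$. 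Throughout, Assumption \eqref{eq:S4} is what guarantees $R<\infty$ and hence $l\lesssim Rh$, and the single constant $\tilde{c}$, uniform in $\mc{T}_h$, is assembled from the trace constant, the reference-element approximation constants, and $C_{reg}$.
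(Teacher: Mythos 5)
The paper never proves this lemma—it imports it verbatim from \cite[Lemma 5.5]{CoQiuSo2014}—and your reconstruction is, in substance, the argument of that source: Taylor expansion along the transfer paths exploiting $\psi=0$ on $\Gamma$ (giving the exact remainder identities behind \eqref{estim-aux for Tu-p3} and \eqref{estim-aux for Tu-p4}), the unit-Jacobian conversion of path integrals into integrals over the extension patches, fractional-order approximation estimates for $P_M$ paired with scaled trace/Bramble--Hilbert bounds for \eqref{estim-aux for Tu-p1} and \eqref{estim-aux for Tu-p2}, and the elliptic regularity \eqref{regularity dual problem}; your bookkeeping of the factors $h$, $R^{1/2}h$, and $l\lesssim Rh$ is consistent with the stated constants. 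Two small points to tidy: in your argument for \eqref{estim-aux for Tu-p4} the bound $\|\nabla\psi\|_{\Gamma_h}\lesssim\|\psi\|_{H^2(\Omega)}$ needs a trace constant uniform in $h$ on the varying polygonal boundary $\Gamma_h$—obtainable from the scaled elementwise trace inequality together with the strip estimate $\|\nabla\psi\|^2_{L^2(S_h)}\lesssim h\,\|\nabla\psi\|^2_{H^1(\Omega)}$ for the $O(h)$-width union $S_h$ of boundary elements—and the finiteness of the proximity constant $R$ is guaranteed by the local proximity condition through \eqref{eq:S2}, not by Assumption \eqref{eq:S4}, which plays no role in your derivation.
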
	
%
\paragraph{Function Delta}
%
 For any smooth enough function $\mbf{v}$ defined in $T^e\cup T_{ext}^e$ and $\mbf{x}\in\Gamma_h$ we set
	\begin{equation}\label{def:delta}
	\delta_{\mbf{v}} (\mbf{x}) := \dfrac{1}{l(\mbf{x})} \int_0^{l(\mbf{x})} [\mbf{v}(\mbf{x} + \mbf{n}s) - \mbf{v}(\mbf{x}) ] \cdot \mbf{n} ds.
	\end{equation}
which hold for each $e\in \mc{E}_h^{\partial}$ (cf. \cite[Lemma 5.2]{CoQiuSo2014}):
	\begin{subequations}\label{ineq:deltav}
	\begin{align}
	\label{ineq: est delta 1}
	\| l^{1/2} \, \delta_{\mbf{v}} \|_e &\leq \dfrac{1}{\sqrt{3}} \, r_e^{3/2} \, C_{ext}^e \, C_{inv}^e \, \|\mbf{v}\|_{T^e}  & &\forall \ \mbf{v} \in [\md{P}_k(T)]^d, \\
	\label{ineq: est delta 2}
	\| l^{1/2}\,\delta_{\mbf{v}} \|_e &\leq \dfrac{1}{\sqrt{3}} \, r_e \, \| h^{\perp} \partial_n \mbf{v} \cdot \mbf{n}\|_{T_{ext}^e} & &\forall \ \mbf{v} \in [H^1(T)]^d. \\
	\label{ineq: est delta 3}
	\|l^{1/2}\, \delta_{\mbf{v}}\|_{\infty} &\leq \dfrac{1}{\sqrt{3}} \, r_e \,  \sup_{\mbf{x}\in e} \|h_e^{\perp}\,\partial_n \mbf{v}\cdot \mbf{n} \|_{l(\mbf{x})} &&\forall \ \mbf{v} \in [H^1(T)]^d.
	\end{align}
	\end{subequations}

\bibliography{biblio}

\begin{thebibliography}{10}

\bibitem{ChCoSiZh2019}
G.~Chen, B.~Cockburn, J.~Singler, and Y.~Zhang.
\newblock Superconvergent interpolatory {HDG} methods for reaction diffusion
  equations {I}: An {HDG}$_k$ method.
\newblock {\em Journal of Scientific Computing}, 81(3):2188--2212, 2019.

\bibitem{CoGoLa2009}
B.~Cockburn, J.~Gopalakrishnan, and R.~D. Lazarov.
\newblock Unified hybridization of discontinuous {G}alerkin, mixed, and
  continuous {G}alerkin methods for second order elliptic problems.
\newblock {\em SIAM J. Numer. Anal.}, 47:1319--1365, 2009.

\bibitem{CoGoSa2010}
B.~Cockburn, J.~Gopalakrishnan, and F.-J. Sayas.
\newblock A projection-based error analysis of {HDG} methods.
\newblock {\em Math. Comp.}, 79:1351--1367, 2010.

\bibitem{CoReGu2009}
B.~Cockburn, D.~Gupta, and F.~Reitich.
\newblock Boundary-conforming discontinuous {G}alerkin methods via extensions
  from subdomains.
\newblock {\em Journal of Scientific Computing}, 42(1):144, Aug 2009.

\bibitem{CoQiuSo2014}
B.~Cockburn, W.~Qiu, and M.~Solano.
\newblock A priori error analysis for {HDG} methods using extensions from
  subdomains to achieve boundary conformity.
\newblock {\em Mathematics of computation}, 83(286):665--699, 2014.

\bibitem{CoSiZh2019}
B.~Cockburn, J.~R. Singler, and Y.~Zhang.
\newblock Interpolatory {HDG} method for parabolic semilinear {PDE}s.
\newblock {\em Journal of Scientific Computing}, 79(3):1777--1800, 2019.

\bibitem{CoSo2012}
B.~Cockburn and M.~Solano.
\newblock Solving {D}irichlet boundary-value problems on curved domains by
  extensions from subdomains.
\newblock {\em SIAM Journal on Scientific Computing}, 34(1):A497--A519, 2012.

\bibitem{DaFe:2014}
R.~Z. Dautov and E.~M. Fedotov.
\newblock Abstract theory of hybridizable discontinuous {G}alerkin methods for
  second-order quasilinear elliptic problems.
\newblock {\em Computational Mathematics and Mathematical Physics},
  54(3):474--490, Mar. 2014.

\bibitem{GaSe:2015}
G.~N. Gatica and F.~A. Sequeira.
\newblock Analysis of an augmented {HDG} method for a class of
  {Q}uasi-{N}ewtonian {S}tokes flows.
\newblock {\em Journal of Scientific Computing}, 65(3):1270--1308, Mar. 2015.

\bibitem{GS2}
G.~N. Gatica and F.~A. Sequeira.
\newblock A priori and a posteriori error analyses of an augmented {HDG} method
  for a class of {Q}uasi-{N}ewtonian {S}tokes flows.
\newblock {\em J. Sci. Comput.}, 69:1192--1250, 2016.

\bibitem{GrRu:1958}
H.~Grad and H.~Rubin.
\newblock Hydromagnetic equilibria and force{-}free fields.
\newblock In {\em Proc. Second international conference on the peaceful uses of
  atomic energy, Geneva}, volume 31,190, New York, Oct 1958. United Nations.

\bibitem{Leng2021}
H.~Leng.
\newblock Adaptive {HDG} methods for the steady-state incompressible
  {N}avier--{S}tokes equations.
\newblock {\em Journal of Scientific Computing}, 87(1):1--26, 2021.

\bibitem{LuSc:1957}
R.~L\"{u}st and A.~Schl\"{u}ter.
\newblock Axialsymmetrische magnetohydrodynamische
  {G}leichgewichtskonfigurationen.
\newblock {\em Z. Naturf}, 12a:850--854, 1957.

\bibitem{Marche2020}
F.~Marche.
\newblock Combined hybridizable discontinuous {G}alerkin ({HDG}) and
  {R}unge-{K}utta discontinuous {G}alerkin ({RK-DG}) formulations for
  {G}reen-{N}aghdi equations on unstructured meshes.
\newblock {\em Journal of Computational Physics}, 418:109637, 2020.

\bibitem{QiSh2019}
W.~Qiu and K.~Shi.
\newblock {A mixed {DG} method and an {HDG} method for incompressible
  magnetohydrodynamics}.
\newblock {\em IMA Journal of Numerical Analysis}, 40(2):1356--1389, 01 2019.

\bibitem{SaSaSo2019}
N.~S{\'{a}}nchez, T.~S{\'{a}}nchez-Vizuet, and M.~E. Solano.
\newblock A priori and a posteriori error analysis of an unfitted {HDG} method
  for semi-linear elliptic problems.
\newblock {\em Numerische Mathematik}, 148(4):919--958, Aug. 2021.

\bibitem{SaSo2018}
T.~S\'anchez-Vizuet and M.~E. Solano.
\newblock A {H}ybridizable discontinuous {G}alerkin solver for the
  {G}rad-{S}hafranov equation.
\newblock {\em Computer Physics Communications}, 235:120--132, Feb 2019.

\bibitem{SaCeSo2019}
T.~S\'anchez-Vizuet, M.~E. Solano, and A.~J. Cerfon.
\newblock Adaptive hybridizable discontinuous {G}alerkin discretization of the
  {G}rad--{S}hafranov equation by extension from polygonal subdomains.
\newblock {\em Computer Physics Communications}, 255:107239, 2020.

\bibitem{Shafranov1958}
V.~D. Shafranov.
\newblock On magnetohydrodynamical equilibrium configurations.
\newblock {\em Soviet Physics JETP}, 6:545--554, 1958.

\end{thebibliography}
\bibliographystyle{abbrv}
\include{biblio}


\end{document}